\numberwithin{equation}{section}
\newtheorem{theorem}{Theorem}[section]
\newtheorem{lemma}[theorem]{Lemma}
\newtheorem{definition}[theorem]{Definition}
\newtheorem{proposition}[theorem]{Proposition}
\newtheorem{corollary}[theorem]{Corollary}
\theoremstyle{remark}
\newtheorem{remark}{Remark}
\def\cS{{\mathcal S}}
\def\div{ \hbox{\rm div}\,  }
\def\N{{\mathbb N}}
\def\R{{\mathbb R}}
\def\T{{\mathbb T}}
\def\Z{{\mathbb Z}}
\def\eps{\varepsilon}
\begin{document}
 \title[\hfilneg \hfil ]
{ Weak solutions of Moffatt's magnetic relaxation equations
}

 \author[Tan]{ \text{Jin Tan}}
 \address[Jin Tan]{\newline  Department of Mathematics,The Chinese University of Hong Kong, Shatin, Hong Kong, P.R. China}
 \email{jin.tan@cuhk.edu.hk}
  \subjclass[2010]{35Q35; 35D30; 35B40;  78A30}
 \keywords {Moffatt's magnetic relaxation equations,  Euler/magnetostatic equilibria,  weak solutions,  global regularity}

\begin{abstract}
 We investigate the  existence of finite-energy weak solutions for a family of equations first introduced by Moffatt to model magnetic relaxation. These equations are active vector equations,   analogous to  the 3D classical Euler equations in vorticity form, and supplemented with a non-local bilinear constitutive law that depends on a regularization parameter $\gamma\geq 0.$ However,  that regularization term  does not yield any  obvious robust compactness properties  for the magnetic field,  regardless of  the value of $\gamma.$    Therefore, the global existence of  weak solutions for merely finite-energy initial data is not known and has been proposed as an open problem in  Beekie, Friedlander and Vicol \cite[Page 1337, Q4]{BFV}.
This paper partially answers the question by showing the existence and \emph{uniqueness} of global  weak solutions  for any solenoidal initial magnetic field  $B_0\in L^2(\T^d)$ in the case where  $\gamma>{d}/{2}+1.$
 Moreover, the existence and uniqueness result holds  for the borderline case $\gamma={d}/{2}+1$    if $B_0\in L^{p}(\T^d),$  for some $p>2.$    To prove these results,  a key compactness lemma  for an active vector equation is established,  which may be of independent interest.


%

\end{abstract}

\maketitle
  
 \section{Introduction}

In  Arnold's seminal work \cite{Ar66}, he developed a new set of geometric ideas for the classical incompressible Euler equations:
\begin{equation}\label{eq-Euler}
    \partial_t u+u\cdot\nabla u+\nabla P=0,\quad \div u=0.
\end{equation}
This geometric perspective views  \eqref{eq-Euler} as the geodesic equations of a right-invariant metric on the infinite-dimensional group of volume preserving diffeomorphisms. 
Since then there have been  numerous works in the literature devoted to the subject of topological hydrodynamics,  for example Ebin and Marsden \cite{EM70}, Holm et al. \cite{HMRW85},  Arnold and  Khesin \cite{AK98}.  On the other hand,  Arnold \cite{Ar74} suggested a process which demonstrates the existence of an Euler equilibrium (i.e. steady state of system \eqref{eq-Euler}) that has the same topological structure as an arbitrary divergence free magnetic field. The idea is to use the dynamics of the magnetic field to reach an Euler  equilibrium     which is topology-preserving (see Brenier \cite{Br14} for an  illustration). This concept was developed later by Moffatt \cite{Mo85}. The magnetic relaxation procedure envisioned by Moffatt formally preserves the streamline topology of an initial divergence free three-dimensional magnetic field,    whose evolution under the magnetic relaxation equations  is conjectured to converge in the infinite time limit towards ideal Euler equilibria.    There are other magnetic relaxation equations for which the steady states are incompressible Euler equilibria. We mention for instance the models of  Vallis-Carnevale-Young\cite{VCY89} and Nishiyama \cite{Ni03}.    
For more details  about topological aspects of fluid dynamics,   see Moffatt's recent overview  \cite{Mo21}.
\medskip

In this paper, we consider the Cauchy problem of a magnetic relaxation equations (MRE) considered in Beekie-Friedlander-Vicol  \cite{BFV} (which is motivated by Moffatt \cite{Mo85}) on the flat torus $\T^d=\R^d/\Z^d$ with $d\geq 2:$
\begin{equation}\label{MRE}
\left\{\begin{aligned}
 \partial_t B+u\cdot\nabla B  &=B\cdot\nabla u,\\
 (-\Delta)^\gamma u&=B\cdot\nabla B+\nabla P,\\
 \div u=\div B&=0,
\end{aligned}\right.
\end{equation}
supplemented with the initial conditions
\begin{equation}\label{eq-indata}
  B(0, x)=B_0(x),\quad x\in \T^d.
\end{equation}
The unknowns are the  incompressible magnetic vector field $B,$ the   incompressible velocity vector field $u$  which is taken to have zero mean on $\T^d$, and  the scalar pressure $P.$
The  parameter $\gamma\geq 0$ is a regularization parameter of the constitutive law\footnote{See \eqref{Def-fracL-P} for a definition of fractional Laplacian on $\T^d$.} $B\mapsto u:$
\begin{equation}\label{C-law}
    u=(-\Delta)^{-\gamma}\mathbb{P}(B\cdot\nabla B)=(-\Delta)^{-\gamma}\mathbb{P}\div(B\otimes B),
\end{equation}
  where $\mathbb{P}$ is the Leray projector  onto divergence free vector fields.
\bigbreak

First and foremost, let us observe from \eqref{MRE}$_3$ 
 that for any smooth solution $(B, u)$ of \eqref{MRE} we have
\begin{equation}\label{en-eq}
    \frac{d}{dt}\|B(t, \cdot)\|_{L^2}^2+  \|u(t, \cdot)\|_{\dot{H}^\gamma}^2 +\|\mathbb{P}\div(B\otimes B)\|_{\dot{H}^{-\gamma}}^2= 0.
\end{equation}
From   above energy inequality, it seems proper to  define a distribution $u$ via  \eqref{MRE}$_2.$   
Note that in order to define weak solutions for \eqref{MRE}$_1$ a minimal requirement for $u(t, \cdot)$ is the square integrability,  when $B(t,\cdot)$ is square integrable. By the Sobolev embedding and \eqref{MRE}$_2$, this   would require $\gamma>{d}/{4}+\frac{1}{2}.$  The energy  equality \eqref{en-eq} comes to the rescue, it provides for any $\gamma\geq0$ the required square integrability for both $u$ and $B$ in space.  However, the existence of global weak solutions  for \eqref{MRE} supplemented with rough initial data (e.g. merely bounded data) is   challenging even when $\gamma>{d}/{4}+\frac{1}{2},$  since energy inequality \eqref{en-eq} does not yield robust compactness properties for the magnetic field to pass to the limit in the nonlinear term $B\cdot\nabla B$ in \eqref{MRE}$_2$.  To our best knowledge, the only existence result with rough initial data is due to  Brenier \cite{Br14}.  In the two dimensional case and with $\gamma=0,$ he obtained  global-in-time dissipative weak solutions  in the spirit of  Lions' dissipative weak solutions to the Euler equations \cite{PLL}, a notion of solution which is weaker than the one of an usual weak solution, but which holds a weak-strong uniqueness property.    On the other hand, let us mention    Constantin and Pasqualotto \cite{CP23}  recently constructed   Euler/magnetostatic equilibrium from certain Voigt regularizations of the classical incompressible MHD system without resistivity.  The regularization considered in \cite{CP23} gives additional compactness for the magnetic field, which allows to  pass to the limit in the  expression $B\cdot\nabla B,$ and obtain global-in-time solutions.  However, the price for that  Voigt type regularization is poor control on the topology of the limiting  magnetic field ($t\to\infty$), because  the topology of the magnetic field  is not  preserved along time evolution.   We also mention the work \cite{KK23} of Kim and Kwon on the global existence of weak solutions of  Stokes-Magneto system with   fractional diffusions.   Later,  in the case without magnetic resistivity,  Bae-Kwon-Shin \cite{BKS23} established the local and global well-posedness for smooth enough initial magnetic field.   Local existence and uniqueness of solutions in low regularity space for other   magnetic relaxation equations were proved in \cite{F14, C16, F17, Li17}.
  
  \bigbreak
  
At this stage, let us recall the results obtained for \eqref{MRE} in Beekie-Friedlander-Vicol \cite{BFV}. Local-in-time  existence and uniqueness of solutions were established   for initial data $B_0$ in Sobolev spaces $H^s$ with $s>{d}/{2}+1$, for any $\gamma\geq 0.$ Global well-posedness result was obtained if further $\gamma>{d}/{2}+1.$  The question of global regularity or finite time blowup for \eqref{MRE} when $\gamma\in[0, {d}/{2}+1]$ was listed as an open 
problem in \cite{BFV}.  About Moffatt's conjecture on magnetic relaxation for \eqref{MRE},  it is showed that there exists a solution such that  the Lipschitz norm of $u$ is not integrable in the whole time interval $\R_+.$ 
More precisely, motivated by the work of Elgindi and Masmoudi \cite{EM20} on the Euler equations, in three-dimensional case they exhibited an example of  current $\nabla\times B$ that  grows exponentially in time.
These examples suggest the relaxation towards Euler  equilibrium from \eqref{MRE} is a subtle matter, 
as for general initial data one cannot expect magnetic relaxation with respect to strong norms.
This fact motivates them  established in \cite[Theorem 5.1]{BFV} the asymptotic stability of a special Euler equilibrium $e_1:=(1, 0)^{\rm T}$ under Sobolev smooth perturbations in the two-dimensional case and $\gamma=0$. 
\bigbreak

The main goal of this paper is to establish the  existence, uniqueness and regularity property of global weak solutions for \eqref{MRE} subject to (almost)  finite-energy  initial data when the regularization parameter $\gamma\geq {d}/{2}+1.$  Before stating our main results, let us first give the following definition of weak solutions for Moffatt's MRE \eqref{MRE} with general $\gamma\geq0.$

\begin{definition}\label{Def-ws}
Let $\gamma\geq 0.$  Let initial data $B_0\in L^2(\T^d)$ and   $\div B_0=0.$ A pair of vector fields $(B, u)(t, x)$   is called a global weak solution to  the Cauchy problem \eqref{MRE}-\eqref{eq-indata},   if $(B, u)\in L^\infty(0, T;  L^2(\T^d))\times L^2(0, T; \dot{H}^\gamma(\T^d))$  for all $0<T<\infty,$  and   it satisfies for almost every  $0<t<T,$
\begin{multline*}
    \int_0^t \int_{\T^d} [B\cdot\partial_t v+ (B\otimes u):\nabla v-(u\otimes B):\nabla v]\,dxd\tau\\
   =\int_{\T^d}B(t, \cdot)\cdot v(t, \cdot)\,dx- \int_{\T^d}B_0\cdot v(0, \cdot)\,dx,
\end{multline*}
for any vector test  function $v\in \mathcal{C}^\infty([0, T)\times \T^d),$ and 
\begin{equation*}
    \int_{\T^d} [u(t, \cdot)\cdot(-\Delta)^\gamma w+ (B(t, \cdot)\otimes B(t, \cdot)):\nabla w]\,dx=0,
\end{equation*}
 for any vector test  function $w\in\mathcal{C}^\infty(\T^d)$ with $\div w=0,$ and  
 \begin{equation*}
     \int_{\T^d} B(t, \cdot)\cdot\nabla \phi\,dx= \int_{\T^d} u(t, \cdot)\cdot\nabla \psi\,dx=0,
\end{equation*}
 for any scalar test functions   $\phi, \psi\in \mathcal{C}^\infty(\T^d).$ 

Moreover, the   energy inequality holds, namely,  
\begin{equation}\label{en-ineq-def}
      \|B(t, \cdot)\|_{L^2 }^2+2\int_{0}^t\|u(\tau, \cdot)\|_{\dot{H}^{\gamma} }^2\,d\tau\leq \|B_0\|_{L^2 }^2.
\end{equation}
\end{definition}
\bigbreak

\subsection{Main results} 
Now, we are ready to state our main results. The first theorem shows the existence  and uniqueness  of   weak solutions  in the sense of Definition \ref{Def-ws}  for merely finite-energy initial data, whenever the regularization parameter satisfying $\gamma> \gamma_c:=d/2+1.$  
\begin{theorem}\label{Th2-1}
    Let $\gamma>\gamma_c.$ Given any initial data $B_0\in L^2(\T^d)$ and $\div B_0=0,$ then the  Cauchy problem \eqref{MRE}--\eqref{eq-indata} admits a unique global-in-time weak solution $(B, u)\in (L^\infty(\R_+; L^2)\cap \mathcal{C}_{\rm loc}(\R_+; L^2))\times  L^\infty(\R_+;  H^\gamma ),$ which makes the energy inequality  \eqref{en-ineq-def} becomes an equality.
\end{theorem}

In the next, we show the existence  and uniqueness  of   weak solutions  when the regularization parameter reaches the borderline case $\gamma=\gamma_c.$  To do that, we need the following global well-posedness  result for Moffatt's MRE \eqref{MRE}  supplemented with smooth initial data  and with borderline regularization parameter $\gamma={d}/{2}+1,$  which reads 

\begin{proposition}\label{Th1}
    Let $\gamma=\gamma_c.$ Given any initial data $B_0\in H^s(\T^d)$ with $s>{d}/{2}+1$ and $\div B_0=0.$ Then for any $T>0,$ there exists a  unique  solution $B\in \mathcal{C}([0,  T]; H^s)$ with associated velocity $u\in \mathcal{C}([0, T]; H^{s-1+2\gamma_c} )\cap L^2(0, T; H^{s+\gamma_c} ).$  Moreover, the pair $(B, u)$ satisfies the following energy equality: 
    \begin{equation}\label{G-en-eq}
        \|B(t, \cdot)\|_{L^2 }^2+2\int_{t_0}^t\|u(\tau, \cdot)\|_{\dot{H}^{\gamma_c} }^2\,d\tau=\|B(t_0, \cdot)\|_{L^2 }^2,
    \end{equation}
   for all $0\leq t_0\leq t\leq T,$ and also the bound:
\begin{multline}\label{Th1-bound}
     \|B(t,\cdot)\|_{ {H}^s }^2+\int_0^t\|u(\tau, \cdot)\|_{{H}^{s+\gamma_c} }^2\,d\tau\\
     \leq  \|B_0\|_{{H}^s }^2 \,e^{e^{C \sqrt{t}\|B_0\|_{L^2}(1+t\|B_0\|_{H^s}^2)   (Ct+\| B_0\|_{L^p}^2)\exp(C\sqrt{t}\|B_0\|_{L^2})\,}},
\end{multline}
where $C$ is a positive constant depends only on $s$ and  $d.$
\end{proposition}

Then, we have

\begin{theorem}\label{Th2}
    Let $\gamma= \gamma_c$ and $\eps>0.$ Given any initial data  $B_0\in L^{2+\eps}(\T^d)$ and $\div B_0=0,$    then the Cauchy problem \eqref{MRE}--\eqref{eq-indata}  admits   a unique  global-in-time weak solution $(B, u)\in \left(L^\infty(\R_+; L^2)\cap \mathcal{C}_{\rm loc}(\R_+; L^2)\cap L^\infty_{\rm loc}(\R_+; L^{2+\eps})\right)\times  L^\infty_{\rm loc}(\R_+; W^{1, \infty}),$ which  satisfies the energy equality  \eqref{G-en-eq}  and the bound:  
\begin{align}\label{es-Th2}
    ~~~ \| B(t, \cdot)\|_{L^{2+\eps}}^2\leq (e+\| B_0\|_{L^{2+\epsilon}}^2)^{\exp(C\sqrt{t}\|B_0\|_{L^2})}\,   e^{Ct\,\exp(C\sqrt{t}\|B_0\|_{L^2})},
\end{align}
for almost every $t\in[0, \infty),$ where $C>0$  depends only on  $d$ and $\eps.$ 
\end{theorem}
 Note that the bound \eqref{es-Th2} is important as it implies Lipschitz regularity of the velocity field,  while \eqref{G-en-eq} does not.
\bigbreak

In the last, we investigate  Moffatt's magnetic relaxation procedure.   
This is a subtle matter that shown by  Beekie-Friedlander-Vicol's examples of growth of the current.   Here, we discuss the possible asymptotic behavior of the solutions obtained in Theorem \ref{Th2-1} and Theorem \ref{Th2}, i.e. for large values of $\gamma.$  The result below shows the relaxation of the  velocity field of these solutions as $t\to\infty.$ 
\begin{corollary}\label{C0}
    Let $(B, u)$ be the weak solutions obtained in Theorem \ref{Th2-1}, then  
 \begin{equation}\label{behavior-u1}
        \lim_{t\to\infty}\|u(t, \cdot)\|_{{H}^\alpha}=0, \quad {\rm for~all}~0\leq \alpha<2\gamma-\gamma_c.
    \end{equation}
    Also,  for  solutions obtained in Theorem \ref{Th2},  one has
 \begin{equation}\label{behavior-u2}
        \lim_{t\to\infty}\|u(t, \cdot)\|_{{H}^{\beta}}=0,\quad {\rm for~all}~0\leq \beta<\gamma_c.
    \end{equation}
\end{corollary}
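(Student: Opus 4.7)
For part (i) (the case $\gamma>\gamma_c$), the plan is to first use the constitutive law $u=(-\Delta)^{-\gamma}\mathbb{P}\,\div(B\otimes B)$ together with the uniform $L^2$-bound $\|B(t)\|_{L^2}\le \|B_0\|_{L^2}$ supplied by the energy inequality \eqref{en-ineq-def} to deduce
\[
u\in L^\infty(\R_+;H^s)\quad\text{for every } s<2\gamma-\gamma_c,
\]
with norm controlled by $\|B_0\|_{L^2}^2$. This is a direct Fourier computation on $\T^d$: one has $\|\widehat{B\otimes B}\|_{\ell^\infty}\le\|B\|_{L^2}^2$, and the threshold $s<2\gamma-\gamma_c$ corresponds to the summability of $|k|^{2s-4\gamma+2}$ over $\Z^d\setminus\{0\}$. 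In particular Sobolev embedding then also provides a uniform bound on $\|u\|_{W^{1,\infty}}$.

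The core of the argument is to show $\|u(t)\|_{\dot{H}^\gamma}\to 0$ pointwise as $t\to\infty$. By the energy identity, the function $f(t):=\|u(t)\|_{\dot{H}^\gamma}^2$ is nonnegative and belongs to $L^1(\R_+)$ with $\int_T^\infty f\,d\tau\to 0$; by the classical lemma for nonnegative integrable functions it thus suffices to prove that $f$ is uniformly continuous on $\R_+$. Using the constitutive law once more, Fourier analysis gives the key estimate
\[
\|u(t)-u(s)\|_{\dot{H}^\gamma}\;\lesssim\;\|B(t)\otimes B(t)-B(s)\otimes B(s)\|_{L^1}\;\lesssim\;\|B_0\|_{L^2}\,\|B(t)-B(s)\|_{L^2},
\]
so uniform continuity of $f$ reduces to uniform continuity of $B:\R_+\to L^2$ near $+\infty$. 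To prove the latter I first exploit $\partial_t B=\div(B\otimes u-u\otimes B)$ and $\|u\|_{L^\infty}\lesssim\|u\|_{\dot{H}^\gamma}$ (Poincar\'e-type, valid since $\gamma>d/2$) to see that $\partial_t B\in L^2(\R_+;H^{-1})$, which by the Cauchy criterion yields strong convergence $B(t)\to B^\infty$ in $H^{-1}$. The \textbf{main obstacle} is to upgrade this to strong convergence in $L^2$: I plan to extract a subsequence $t_n\to\infty$ along which $\|u(t_n)\|_{\dot{H}^\gamma}\to 0$, identify the weak-$L^2$ limit $B^\infty$ as a magnetostatic equilibrium ($\mathbb{P}\,\div(B^\infty\otimes B^\infty)=0$) via a compensated-compactness-type argument, and then show $\|B^\infty\|_{L^2}=\lim_t\|B(t)\|_{L^2}$, so that weak convergence combined with norm convergence forces strong $L^2$-convergence.

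Once $\|u(t)\|_{\dot{H}^\gamma}\to 0$ is known, the conclusion \eqref{behavior-u1} follows by a standard interpolation: for $\alpha\in[0,2\gamma-\gamma_c)$ pick $s\in(\alpha,2\gamma-\gamma_c)$ and write
\[
\|u(t)\|_{H^\alpha}\le C\|u(t)\|_{L^2}^{1-\alpha/s}\|u(t)\|_{H^s}^{\alpha/s}\le C\|u(t)\|_{\dot{H}^\gamma}^{1-\alpha/s}\|u(t)\|_{H^s}^{\alpha/s},
\]
whose first factor tends to zero while the second is uniformly bounded by the first step. Part (ii) follows the same blueprint: starting from the regularity $B\in L^\infty(\R_+;L^{2+\eps})$ and $u\in L^\infty(\R_+;W^{1,\infty})$ furnished by Theorem~\ref{Th2}(ii), Calder\'on--Zygmund applied to the constitutive law supplies a uniform bound on $u$ in some $H^{s'}$ with $s'>\gamma_c$, and the same interpolation scheme then yields $\|u(t)\|_{H^\beta}\to 0$ for every $\beta<\gamma_c$.
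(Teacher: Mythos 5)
Your overall strategy — show that some Sobolev norm of $u$ tends to zero, then interpolate against the uniform-in-time bound on a higher norm — is the same as the paper's, and your preliminary steps (the uniform bound $u\in L^\infty(\R_+;H^s)$ for $s<2\gamma-\gamma_c$, the reduction via the $L^1(\R_+)$-integrability of $\|u(t)\|_{\dot H^\gamma}^2$ plus uniform continuity, the bilinear bound $\|u(t)-u(s)\|_{\dot H^\gamma}\lesssim\|B(t)\otimes B(t)-B(s)\otimes B(s)\|_{L^1}$) are all sound. The genuine gap is exactly where you flag it, and the flagged plan does not close it. You reduce to uniform $L^2$-continuity of $t\mapsto B(t)$, and propose to obtain this by combining weak $L^2$-convergence of $B(t)$ to a magnetostatic equilibrium $B^\infty$ with the norm identity $\|B^\infty\|_{L^2}=\lim_t\|B(t)\|_{L^2}$. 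But that last equality \emph{is} the assertion of strong $L^2$-convergence up to a trivial step; weak lower semicontinuity only gives $\|B^\infty\|_{L^2}\le\lim_t\|B(t)\|_{L^2}$, and nothing in the energy structure rules out a strict drop (oscillation or concentration of $B$ near $t=\infty$). The argument is circular. Moreover, whether $B(t)$ relaxes strongly in $L^2$ to a steady state is explicitly stated in the paper to be an open question ("we are not sure that the above magnetic field $B(t,\cdot)$ will relax to a steady state $B^\infty$"), so any proof of the corollary that relies on settling it is suspect.

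The paper avoids this entirely by \emph{not} differencing $B$ in time. Instead of writing $\partial_t u$ as the operator applied to $\partial_t(B\otimes B)$ naively, it uses the identity
\begin{equation*}
 \partial_t(B\otimes B) +  \div\bigl(u \odot(B\otimes B)\bigr)
 =\nabla u\,(B\otimes B)+ (B\otimes B)\,(\nabla u)^{\rm T},
\end{equation*}
so that $\partial_t u$ becomes a bounded operator applied to a product of $u$ (or $\nabla u$) with $B\otimes B$. Besov product laws and the uniform bounds $u\in L^\infty(\R_+;\dot B^{2\gamma-1}_{1,\infty})$, $B\in L^\infty(\R_+;L^2)$ then give $\partial_t u\in L^\infty(\R_+;H^{d/2})$, hence $u:\R_+\to H^{d/2}$ is Lipschitz, hence uniformly continuous. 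Combined with $u\in L^2(\R_+;H^{d/2})$ (from the energy inequality and $d/2<\gamma$), the vanishing lemma \cite[Lemma 3.1]{DWZZ} gives $\|u(t)\|_{H^{d/2}}\to 0$, and interpolation finishes. This sidesteps any statement about the long-time behavior of $B$ itself, which is precisely what your argument cannot afford to assume. If you want to salvage your write-up, replace the uniform-continuity-of-$B$ step by the estimate of $\partial_t u$ via the $B\otimes B$ evolution identity.
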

However,   the above corollary is not enough to  prove that the   magnetic field   $B(t, \cdot)$  will relax to a steady state $B^{\infty}$ of the Euler equations \eqref{eq-Euler} as $t\to\infty.$    Indeed,  the constitutive law \eqref{C-law} endows a cubic nature of the nonlinear term in the magnetic equation \eqref{MRE}$_1$, which may lead to a growth  of the magnetic field even in weak norms  as suggested in \eqref{es-Th2}.   As such,  for any $\gamma,$   it remains challenging to  discover the nonlinear relaxation mechanism of Moffatt's MRE. 
In what follows, we attempt to identify   the limiting profiles of Moffatt's MRE  for initial data perturbed around a uniform magnetic field.  which is motivated by \cite{DGL}.    Let us recall a result first obtained in  \cite{BFV}.
\begin{theorem}[\cite{BFV}]\label{Th-BFV}
  Let $d=2.$  Let $k\geq 4$ and $m\geq k+9.$ Choose $\delta\in(0, 1).$ There exists a small enough number $\eta_0$ such that for any solenoidal vector field $B_0$ if $\|B_0-e_1\|_{H^m}=\eta\leq \eta_0,$
  and $\mathbb{P}_0(B_0-e_1)=0,$
    then we have that \eqref{MRE}-\eqref{eq-indata} has a unique global in time solution $(B, u),$ which satisfies $\|B-e_1\|_{L^2}\leq \eta,$ $\mathbb{P}_0 B^2\equiv0$ and 
   \begin{align}
       \|\mathbb{P}_{\perp} (B-e_1)\|_{\dot{H}^k}&\leq 4\eta \,e^{-(1-\delta)t},\label{ThBFV1}\\
         \|\mathbb{P}_{0}(B^1-1)\|_{{H}^{k+2}}&\leq 4\eta, \label{ThBFV2} \\
           \|B-e_1\|_{\dot{H}^m}^2&\leq 4\eta \,e^{\eta t}
   \end{align}
   for $t\in[0, \infty).$ In above,  for any function $\psi: \T^2\to \R,$ we used the notations
   \begin{align*}
       (\mathbb{P}_0 \psi)(x_2):=&\int_{0}^1\psi(x_1, x_2)\,dx_1,\\
       (\mathbb{P_\perp}) \psi(x_1, x_2):=& \psi(x_1, x_2)- (\mathbb{P}_0 \psi)(x_2).
   \end{align*}
   As a consequence, the  velocity field satisfies $u(t, \cdot)\to 0$ as $t\to \infty,$ whereas the magnetic field $B(t, \cdot)$ relaxes to a steady state $B^{\infty}$ with $\|B^\infty-e_1\|_{H^{k+2}}\leq 4\eta,$ both convergences taking place with respect to strong topologies.
\end{theorem}
Then, we  show that
\begin{corollary}\label{C1}
   The steady state $B^\infty$ obtained in Theorem \ref{Th-BFV} is a vector-valued function of the single variable $x_2\in\T$ (that corresponds to a shear flow of the Euler equations)  given by\footnote{We use dot for differentiation with spatial variable $x_2.$}   $$B^\infty(x_1,  x_2)= (-  (\dot{\phi}^\infty)(x_2), 0)$$
   with
  \begin{align}\label{formula-phi}
     \phi^\infty(s):= \int_{0}^\infty {\bf 1}_{ \{|\{|\phi_0|>\lambda\}|\geq s\}}(\lambda)\,d\,\lambda\quad{\rm for}~~ s\in[0, 1],
   \end{align}
   where $\phi_0: \T\times [0, 1]\to \R$ is the stream function of  $B_0$ such that $(\mathbb{P}_0 \dot\phi_0)(\cdot)=-1.$ 
\end{corollary}  
\begin{proof}
 In the two-dimensional case,  because  $\div B=0$  we may identify a (unique up to an additive constant) scalar magnetic stream function $\phi$ on $\R_+\times( \T\times [0, 1]),$  which satisfies the active scalar equation 
   \begin{equation*}
   \left\{\begin{aligned}
         \partial_t \phi+u\cdot\nabla \phi=0\\
         \phi(t,  \cdot)|_{t=0}=\phi_0(\cdot).
   \end{aligned}\right.
   \end{equation*}
Note that  $ \nabla^{\perp} \phi =B,$ where $\nabla^{\perp}:=(-\partial_2, \partial_1),$ thus in the current framework $\phi$ cannot  be a periodic function in terms of the vertical spatial variable.

Thanks to  \eqref{ThBFV1} and the fact that $\mathbb{P}_0 B^2\equiv 0,$   we see that
  \begin{align*}
      \|\partial_1 \phi(t, \cdot)\|_{\dot{H}^k}=  \|\mathbb{P}_{\perp} B^2\|_{\dot{H}^k}\leq 4\eta \,e^{-(1-\delta)t}\to 0, \quad{\rm as}~~~\, t\to\infty.
  \end{align*}
Putting this together with \eqref{ThBFV2} thus yields
  $\phi(t, \cdot)\to \phi^*$ in strong topologies  as $t\to \infty,$ for some $\phi^*$ that depending only on the vertical spatial variable and  satisfying $\dot{\phi^*}<0.$
  In other words, \emph{$\phi^*$ is a vertical decreasing   rearrangement\footnote{We say that two maps $\phi_1, \phi_2:\T^d\to \R$ are rearrangement of each other if $|\{\phi_1>\lambda\}|=|\{\phi_2>\lambda\}|$ for all $\lambda\geq0.$} of $\phi_0.$} The classical rearrangement theory in e.g.   \cite[Chapter 3]{LL01} tells us that  $\phi^\infty$ given in \eqref{formula-phi}  is the unique vertical decreasing   rearrangement of $\phi_0,$ therefore $\phi^*=\phi^\infty.$
  \end{proof}
\bigbreak

\subsection{A few remarks on the main results} 

First and foremost,   Theorem \ref{Th2-1}   partially answers an open problem  that proposed in \cite[Page 1337, Q4]{BFV}, which concerns the existence of weak solutions for any $\gamma\geq0$ and the validity of the energy inequality \eqref{en-ineq-def} for $\gamma\in(d/4+1/2, d/2+1].$  As a consequence of   our result, the existence of weak solutions for $\gamma\in[0, d/2+1]$ remains open.   
 We note that the essential difficulty  for proving existence of a weak solution for MRE \eqref{MRE} lies in the lack of  robust compactness property of the magnetic field, which can not be directly obtained  from energy inequality \eqref{en-eq},  for any $\gamma\geq0.$ To  obtain strong convergence property of the magnetic field, we establish a key compactness result for the following linear active vector equations:
\begin{equation}
\left\{
\begin{aligned}
&\partial_t A_n+\div(A_n \otimes v_n)=\div(v_n\otimes A_n),\\
&\div A_n=\div{v}_n=0.
\end{aligned}
\right.
\end{equation}
The result  stated in Lemma \ref{Lemma3.1} can be understood as a generalization of  Lions' fundamental compactness result \cite{PLL}  for scalar transport equation.     
\medskip

Secondly, Proposition \ref{Th1} answers a borderline case of  an open problem  that was proposed in \cite[Page 1336, Q1]{BFV}.   Notice that the global regularity problem of  the borderline case $\gamma=\gamma_c$ is not accessible by the methods of \cite{BFV}. Indeed, in this case, the energy inequality \eqref{en-eq} does not ensure the  velocity field is Lipschitz continuous but only log-Lipschitz continuous.  To deal with this borderline situation,  we take use of some logarithmic interpolation inequalities  and  observe that the $L^{2+\eps}$ norm of the magnetic field can be controlled by its initial norm through a double exponential inequality, see \eqref{es-Th2}. 
As a consequence, we obtain   global-in-time bound of  Lipschitz norm of the velocity field in  \eqref{es-2.2-000}. 
 Still, these new bounds cannot be applied to  previous known regularity criteria.  We thus  provide a refined Beale-Kato-Majda type regularity criterion,  i.e.  Proposition \ref{Prop-criteria}.   Furthermore,  Theorem \ref{Th2} shows that for  initial data only in  $L^{2+\eps}$,  the global-in-time existence and uniqueness result remains valid.
\medskip

Our analysis relies heavily  on the Littlewood-Paley decomposition   and Besov spaces (see Appendix \ref{A} for definitions),    as this framework is well-suited to studying the well-posedness of rough solutions for Moffatt's MRE \eqref{MRE}. Indeed,  noticing that the nonlinear term $B\otimes B$ may only be in $L^1,$  and standard $L^1$ estimates fail for the Stokes-type equation governing the velocity field because Calder\'{o}n-Zygmund operators are unbounded in $L^1$ (and $L^\infty$).    For uniqueness,  the active vector equation for the magnetic field inherently causes a loss of one derivative in the estimates for the difference of two solutions.  More precisely,  even for large $\gamma,$  it seems difficult to obtain an $L^2$ estimate for the equation
\begin{equation*}
\partial_t \delta B + u_1\cdot\nabla \delta B=B_1\cdot\nabla \delta u+\delta B\cdot\nabla u_2- \delta u\cdot\nabla B_2,
\end{equation*}
which is satisfied by $\delta B= B_1-B_2,$ where $B_1$ and $B_2$ are two solutions emanating from the same initial datum.  The troublemaker is the term $\delta u\cdot\nabla B_2$,  due to the  lack of regularity of solutions.   As such  we handle the uniqueness issue  by the Lagrangian approach, and it relies on some estimates in Besov spaces (instead of $L^1$) for related  Stokes system.
Let us mention that  after transforming the problem in Lagrangian coordinates,  the difference analysis is  not standard for the case $\gamma\notin \N.$ Indeed, since the fractional Laplacian operator is {\em{nonlocal}},  we have to  formulate  an associated operator in the Lagrangian coordinates, and provide some necessary estimates for such operators.

\medskip
From a mathematical perspective, the analysis of the MRE \eqref{MRE} is  both interesting and challenging.   On the one hand,   the combination of the constitutive law \eqref{C-law} and the energy inequality \eqref{en-ineq-def} suggests the existence of a relaxation mechanism in the magnetic field that has yet to be discovered.  Regarding this topic,  we establish in Corollary \ref{C0} the relaxation of the velocity field for the case that $\gamma\geq \gamma_c$. 
On the other hand,  MRE \eqref{MRE} shares some similarities with the  incompressible Euler equations \eqref{eq-Euler} in vorticity formulation, and its solutions are conjectured   to converge asymptotically  to the steady states of \eqref{eq-Euler} as time diverges.     In this regard,  Corollary \ref{C1} seems to be  the first result that provides non-trivial explicit limiting profiles in the context of the magnetic relaxation problem.
\medskip

  After the completion of the current manuscript (arXiv:2311.18407),  the author  learned  that  a global well-posedness result for the borderline case $\gamma=\gamma_c$,  similar to Proposition \ref{Th1},     had been obtained  earlier  in Bae-Kwon-Shin \cite[Theorem 2.2]{BKS23} (arXiv:2310.03255).        The work of Bae–Kwon–Shin  primarily focuses on  the case  of  a magnetic field endowed with  fractional diffusion,   which   constitutes a fundamental difference from the present work.  They  also address the case without magnetic diffusion and  improve upon  the local well-posedness result obtained  by Beekie-Friedlander-Vicol \cite{BFV}.       
  Compared to their result,  the bounds \eqref{Th1-bound}, \eqref{es-2.2-111}, \eqref{es-2.2-000} and Proposition \ref{Prop-criteria}  are novel.    More importantly,   Proposition \ref{Th1}   serves here  only as an  intermediate step toward establishing  the global existence of almost  finite-energy solutions,   a result that is non-trivial.

\subsection*{Notation} 
We end this introductory part by presenting  a few notations. 
We denote by $C$ harmless positive `constants' that  may change from one line to the other, and we sometimes write $A\lesssim B$ instead of $A\leq C B.$  Likewise,    $A\sim B$ means that  $C_1 B\leq A\leq C_2 B$ with absolute constants $C_1$, $C_2$. 
Throughout the paper, $i$-th coordinate of a vector $v$ will be denoted  
by $v^i.$  For a real-valued matrix ${\rm M}$, ${\rm M}^{\rm T}$ represents its transpose, while for two multidimensional real-valued matrices ${\rm M}_1, {\rm M}_2,$  ${\rm M}_1:{\rm M}_2$ denotes their standard inner product. We use the notation $\Lambda:=(-\Delta)^{{1}/{2}},$  $\left\langle~,~ \right\rangle$ for the $L^2$ inner product  and $[\mathbb{A}, \mathbb{B}]:=\mathbb{A}\mathbb{B}-\mathbb{B}\mathbb{A}$  for
the commutator of two operators.   
For $\mathcal{X}$ a Banach space, $p\in[1, \infty]$ and $T\in(0,\infty]$, the notation $L^p(0, T; \mathcal{X})$  designates the set of measurable functions $f: [0, T]\to \mathcal{X}$ with $t\mapsto\|f(t)\|_\mathcal{X}$ in $L^p(0, T)$, endowed with the norm $\|\|\cdot\|_\mathcal{X}\|_{L^p(0, T)}.$ For any interval $I$ of $\R,$ we agree that $\mathcal{C}(I; \mathcal{X})$  denotes the set of continuous 
  functions from $I$ to $\mathcal{X}$.  We  keep the same notation for functions with several components.
\medskip
 
\subsection*{Structure of the paper} 
{ The rest of the paper unfolds as follows.} In the next section, we focus on the proof of  Proposition \ref{Th1}, which is fundamental to the proof of Theorem \ref{Th2}.   Section \ref{S3} is devoted to the proof of existence in  Theorem \ref{Th2-1} and   \ref{Th2},  via an unified approach.       In Section \ref{S4},  we prove the uniqueness part which will be divided into the cases $\gamma\in\N$ and $\gamma\notin\N.$          The proof of Corollary \ref{C0} is placed  last.   For the reader's convenience,  results concerning Besov spaces and  Littlewood–Paley decomposition on $\T^d$ are recalled in Appendix \ref{A}.

 \section{Global regularity for the borderline case \texorpdfstring{$\gamma={d}/{2}+1$}{TEXT}}
 This section is devoted to the proof of Proposition  \ref{Th1}. To do that, at first we need the following Beale-Kato-Majda type regularity criteria.
 
 \subsection{A refined regularity criteria for $\gamma>d/2$} 
 \begin{proposition}\label{Prop-criteria}
     Let $\gamma>{d}/{2}, s>{d}/{2}+1.$ Assume that the initial magnetic field $B_0\in H^s $ and $\div B_0=0.$ Then, there exists a time $T^*=T^*(\|B_0\|_{H^s}),$ such that the problem  \eqref{MRE}--\eqref{eq-indata} has  
     a unique   solution $B\in \mathcal{C}([0, T^*); H^s )$ with associated velocity $u\in \mathcal{C}([0, T^*); H^{s-1+2\gamma} )\cap L^2(0, T^*; H^{s+\gamma} ).$  Moreover, it holds that  
\begin{equation}\label{blowup-criterion}
     \|B(t,\cdot)\|_{{H}^s }^2\leq \|B_0\|_{{H}^s}^2 \,e^{e^{C (1+t\|B_0\|_{H^s}^2 ){e^{CV(t)}}}},
\end{equation}
for $t\in [0, T^*),$  where $C$ is a positive constant depending only on $\gamma, s, d,$ and  $$V(t):=\int_0^t (\|\nabla u(\tau, \cdot)\|_{L^\infty}+\|\Lambda u(\tau, \cdot)\|_{L^\infty})\,d\tau.$$
 \end{proposition}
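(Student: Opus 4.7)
The plan is to establish the a priori bound \eqref{blowup-criterion} for smooth solutions; the local well-posedness of $B\in \mathcal{C}([0,T^*); H^s)$ and the associated regularity of $u$ (via the constitutive law \eqref{C-law} and $\gamma>d/2$) is already provided by \cite{BFV}, so that the stated criterion follows by a standard continuation argument once the bound is in hand. The bound itself arises from a three-tier cascade of energy estimates, each tier contributing one of the nested exponentials in \eqref{blowup-criterion}.

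The innermost tier is a maximum principle for $B$. From \eqref{MRE}$_1$ and $\div u=0$, a classical $L^p$ estimate (with $p\to\infty$) yields
\[
\|B(t,\cdot)\|_{L^\infty}\le \|B_0\|_{L^\infty}\exp\Bigl(\int_0^t\|\nabla u(\tau,\cdot)\|_{L^\infty}\,d\tau\Bigr)\le \|B_0\|_{H^s}\,e^{V(t)},
\]
which gives the innermost exponential.

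The middle tier is a Lipschitz bound on $B$. Differentiating \eqref{MRE}$_1$ once and running an $L^\infty$ estimate along characteristics produces
\[
\frac{d}{dt}\|\nabla B\|_{L^\infty}\lesssim \|\nabla u\|_{L^\infty}\|\nabla B\|_{L^\infty}+\|B\|_{L^\infty}\|\nabla^2 u\|_{L^\infty}.
\]
The delicate quantity here is $\|\nabla^2 u\|_{L^\infty}$, which is not controlled by $V(t)$ directly (since Riesz transforms are unbounded on $L^\infty$). The key step is a Brezis--Gallouet-type logarithmic interpolation of the form
\[
\|\nabla^2 u\|_{L^\infty}\lesssim \bigl(1+\|\Lambda u\|_{L^\infty}\bigr)\log\bigl(e+\|u\|_{H^\sigma}\bigr),\quad \sigma>d/2+2,
\]
combined with the constitutive law: the algebra estimate $\|B\otimes B\|_{H^s}\lesssim \|B\|_{L^\infty}\|B\|_{H^s}$ (valid since $s>d/2$) together with the smoothing of order $2\gamma-1>d-1$ gives $\|u\|_{H^\sigma}\lesssim \|B\|_{L^\infty}\|B\|_{H^s}$. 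A log-Gronwall combined with the innermost bound then yields
\[
\|\nabla B(t,\cdot)\|_{L^\infty}\le \exp\bigl(C(1+t\|B_0\|_{H^s}^2)\,e^{CV(t)}\bigr),
\]
which is the middle exponential.

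The outermost tier is the $H^s$ energy estimate. Applying dyadic blocks $\dot\Delta_q$ to \eqref{MRE}$_1$ and pairing with $\dot\Delta_q B$, the transport piece collapses (via $\div u=0$) to a Kato--Ponce commutator, while the stretching piece is handled by Moser's inequality. Using the constitutive-law bounds $\|u\|_{H^s}+\|u\|_{H^{s+1}}\lesssim \|B\|_{L^\infty}\|B\|_{H^s}$ (again from the $\Lambda^{-2\gamma}$ smoothing and the Sobolev algebra property), one arrives at
\[
\frac{d}{dt}\|B\|_{H^s}^2\lesssim \bigl(\|\nabla u\|_{L^\infty}+\|B\|_{L^\infty}^2+\|B\|_{L^\infty}\|\nabla B\|_{L^\infty}\bigr)\|B\|_{H^s}^2,
\]
and inserting the previously established bounds on $\|B\|_{L^\infty}$ and $\|\nabla B\|_{L^\infty}$ and invoking Gronwall one final time gives the outermost exponential, completing \eqref{blowup-criterion}. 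The main obstacle is the middle tier: the fact that $\|\nabla^2 u\|_{L^\infty}$ cannot be controlled by $\|\nabla u\|_{L^\infty}$ alone is precisely the reason $\|\Lambda u\|_{L^\infty}$ is included in $V(t)$, and the logarithmic loss that this incurs forces the triple rather than double exponential nesting in the final bound.
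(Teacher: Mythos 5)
Your proposal is structured differently from the paper's, and the middle tier contains a genuine error that breaks the argument for the stated range of $\gamma$.

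The problematic step is the asserted interpolation inequality
\[
\|\nabla^2 u\|_{L^\infty}\lesssim \bigl(1+\|\Lambda u\|_{L^\infty}\bigr)\log\bigl(e+\|u\|_{H^\sigma}\bigr),
\]
which is false: a logarithmic loss cannot close a full derivative gap. Testing with $u(x)=N^{-1}\sin(Nx_1)$ gives $\|\Lambda u\|_{L^\infty}\sim 1$, $\|u\|_{H^\sigma}\sim N^{\sigma-1}$, $\|\nabla^2 u\|_{L^\infty}\sim N$, so the inequality would require $N\lesssim\log N$, absurd. The correct Brezis--Gallouet-type statement would read $\|\nabla^2 u\|_{L^\infty}\lesssim (1+\|\nabla^2 u\|_{\dot B^0_{\infty,\infty}})\log(e+\|u\|_{H^\sigma})$, i.e.\ the leading factor must itself carry two derivatives of $u$. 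You would then need $\|\nabla^2 u\|_{\dot B^0_{\infty,\infty}}$; via the constitutive law this is $\|\Lambda^{3-2\gamma}\mathbb{P}\div(B\otimes B)\|_{\dot B^0_{\infty,\infty}}$, which for $\gamma\geq 3/2$ is $\lesssim\|B\|_{L^\infty}^2$, but for $d=2$ and $\gamma\in(1,3/2)$ (allowed by the hypothesis $\gamma>d/2$) the operator has positive order and this bound fails. So the middle tier as written does not cover the full range of the Proposition.

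The paper avoids this obstacle by taking a structurally different route: it never estimates $\|\nabla B\|_{L^\infty}$. Instead, for the intermediate quantity it uses $\|B\|_{H^1}$ and, crucially, \emph{adds} the $\Lambda^r$-energy estimates for the $B$-equation and the $u$-equation. The cross terms $\langle B\cdot\nabla\Lambda^r u,\Lambda^r B\rangle$ and $\langle B\cdot\nabla\Lambda^r B,\Lambda^r u\rangle$ cancel exactly (since $\div B=0$), and the dangerous $\nabla^2 u$ ends up appearing only through the good dissipation $\|u\|_{\dot H^{r+\gamma}}^2$ on the left, absorbed by Young's inequality. After Kato--Ponce the right-hand side of the $r=1$ estimate \eqref{es-2.1-7} depends only on $\|\nabla u\|_{L^\infty}+\|\Lambda u\|_{L^\infty}+\|B\|_{L^\infty}^2$ (no $\|\nabla^2 u\|_{L^\infty}$, because $\Lambda^{r-1}=\Lambda^0$ is the identity when $r=1$). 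The maximum principle \eqref{es-2.1-00} feeds this, giving a double exponential for $\|B\|_{H^1}$, and then the general-$r$ estimate \eqref{es-2.1-6} (with $\|\nabla B\|_{L^2}^2$, not $\|\nabla B\|_{L^\infty}$) yields the triple exponential. Your innermost and outermost tiers are essentially compatible with this, but the middle tier would need to be replaced by the paper's $H^1$ argument (or restricted to $\gamma\geq 3/2$, which excludes part of the two-dimensional range).
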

 \begin{proof}
     Thanks to \cite[Theorem 2.2]{BFV},  we only need to prove the bound \eqref{Prop-criteria}.  Applying the operator $\Lambda^r$  $(r>0)$ to both sides of MRE \eqref{MRE} and testing the resulting equations with $\Lambda^r B, ~\Lambda^r u,$ respectively, we then obtain that
     \begin{multline}\label{es-2.1-1}
         \frac{1}{2}\frac{d}{dt}\|B(t, \cdot)\|_{\dot{H}^r}^2+\|u(t, \cdot)\|_{\dot{H}^{r+\gamma}}^2\\
         = \langle [\Lambda^r, B\cdot\nabla ]u,  \Lambda^r B \rangle-\langle[\Lambda^r, u\cdot\nabla ]B, \Lambda^r B\rangle+\langle [\Lambda^r, B\cdot\nabla ]B, \Lambda^r u\rangle.
     \end{multline}
     Recalling the following Kato-Ponce type commutator estimates from Lemma \ref{Le-KP}:
for all $r>0,$
      \begin{equation}\label{ineq-Li1}
         \|[\Lambda^r f, g]\|_{L^2}\lesssim \|\Lambda^r f\|_{L^2}\|g\|_{L^\infty}+\|\nabla f\|_{L^2}\|\Lambda^{r-1}g\|_{L^\infty}
     \end{equation}
     and
     \begin{equation}\label{ineq-Li2}
         \|[\Lambda^r f, g]\|_{L^2}\lesssim \|\Lambda^r f\|_{L^\infty}\|g\|_{L^2}+\|\nabla f\|_{L^\infty}\|\Lambda^{r-1}g\|_{L^2}.
     \end{equation}
Now, use    \eqref{ineq-Li1}  and the Sobolev embedding $H^\gamma\hookrightarrow L^\infty$  (since $\gamma>{d}/{2}),$ we have
\begin{align}\label{es-2.1-2}
    |\langle [\Lambda^r, B\cdot\nabla ]u,  \Lambda^r B \rangle|&\lesssim \|[\Lambda^r, B\cdot\nabla ]u\|_{L^2 } \|\Lambda^r B\|_{L^2}\notag\\
    &\lesssim (\|\Lambda^r B\|_{L^2}\|\nabla u\|_{L^\infty}+\|\nabla B\|_{L^2}\|\Lambda^{r-1}\nabla u\|_{L^\infty})\|B\|_{\dot{H}^r}\notag\\
    &\lesssim (\|  B\|_{\dot{H}^r}\|\nabla u\|_{L^\infty}+\|\nabla B\|_{L^2}\| u\|_{H^{r+\gamma}})\|B\|_{\dot{H}^r}\cdotp
\end{align}
Similarly, \eqref{ineq-Li2} yields that
\begin{align}\label{es-2.1-3}
    |\langle [\Lambda^r, u\cdot\nabla ]B,  \Lambda^r B \rangle|&\lesssim \|[\Lambda^r, u\cdot\nabla ]B\|_{L^2 } \|\Lambda^r B\|_{L^2}\notag\\
    &\lesssim (\|\Lambda^r u\|_{L^\infty}\|\nabla B\|_{L^2}+\|\nabla u\|_{L^\infty}\|\Lambda^{r-1}\nabla B\|_{L^2})\|B\|_{\dot{H}^r}\notag\\
    &\lesssim (\|   u\|_{{H}^{r+\gamma}}\|\nabla B\|_{L^2}+\|\nabla u\|_{L^\infty}\| B\|_{\dot{H}^{r }})\|B\|_{\dot{H}^r}\cdotp
\end{align}
   We  decompose  the third term in \eqref{es-2.1-1}, by the fact that $\div B=0,$ into
\begin{align*} 
      \langle [\Lambda^r, B\cdot\nabla ]B, \Lambda^r u\rangle= -\langle \Lambda^{r}(B\otimes B), \nabla\Lambda^{r} u\rangle+ \langle B\cdot\nabla \Lambda^r u, \Lambda^r B \rangle.
\end{align*}
Then, Lemma \ref{Le-KP2} implies that  
\begin{align*}
   | \langle \Lambda^{r}(B\otimes B), \nabla\Lambda^{r} u\rangle|&\lesssim \|B\otimes B\|_{{\dot{H}}^{r}} \|u\|_{\dot{H}^{r+1}} \\
   &\lesssim \|B \|_{{H}^{r}} \|B\|_{L^\infty}\|u\|_{\dot{H}^{r+1}},
\end{align*}
and H\"{o}lder's inequality implies that
\begin{align*}
   | \langle B\cdot\nabla \Lambda^r u, \Lambda^r B \rangle|&\lesssim \|B\|_{L^\infty}\|\nabla \Lambda^r u\|_{L^2} \|\Lambda^r B\|_{L^2}\\
   &\lesssim \|B\|_{L^\infty}\|  u\|_{\dot{H}^{r+1}} \|B \|_{{\dot{H}}^{r}}.
\end{align*}
One finds from above two inequalities   and $\gamma>{d}/{2}\geq 1$ that
\begin{align}\label{es-2.1-5}
    | \langle [\Lambda^r, B\cdot\nabla ]B, \Lambda^r u\rangle|\lesssim   \|B\|_{L^\infty}\|  B\|_{{H}^r}  \|  u\|_{H^{r+\gamma}}.
\end{align}
Putting inequalities \eqref{es-2.1-2}-\eqref{es-2.1-5} and energy inequality \eqref{en-eq}
into inequality \eqref{es-2.1-1}, and using Young's inequality  and the fact that $u$ has zero mean, it gives that
\begin{equation}\label{es-2.1-6}
\frac{d}{dt}\|B \|_{ {H}^r}^2+\|u \|_{ {H}^{r+\gamma}}^2\leq C \|B\|_{ {H}^r}^2(\|\nabla u\|_{L^\infty}+\|\nabla B\|_{L^2}^2 +\|B\|_{L^\infty}^2).
\end{equation}

At this stage, we have to estimate  $\|\nabla B\|_{L^2}$ and  $\|B\|_{L^\infty}.$  In fact, in the case $r=1,$ from \eqref{es-2.1-5} and the last two inequality in \eqref{es-2.1-2} and \eqref{es-2.1-3},   we can improve  \eqref{es-2.1-6}  as
\begin{equation}\label{es-2.1-7}
    \frac{d}{dt}\|B\|_{ {H}^1}^2+\|u\|_{ {H}^{1+\gamma}}^2\lesssim \|B\|_{ {H}^1}^2(\|\nabla u\|_{L^\infty}+\|\Lambda u\|_{L^\infty}+  \|B\|_{L^\infty}^2).
\end{equation}
Applying the maximal principal for magnetic equation \eqref{MRE}$_1$  yields that
\begin{align}  
\| B(t, \cdot)\|_{L^\infty}&\leq   \|  B_0\|_{L^\infty}  \,\exp\left(C\int_0^t \|\nabla u(\tau, \cdot)\|_{L^\infty}\,d\tau\right)\notag\\
& \leq \|  B_0\|_{L^\infty}  \,\exp(CV(t)),\label{es-2.1-00}
\end{align}
   which together with \eqref{es-2.1-7} imply that
\begin{multline*}
    \| B(t, \cdot)\|_{H^1}^2+\|B(t, \cdot)\|_{L^\infty}^2\\
    \leq (\|  B_0\|_{H^1}^2+\|B_0\|_{L^\infty}^2)\exp\{C(1+t\|B_0\|_{L^\infty}^2)\exp(CV(t))\}.
\end{multline*}
Finally, taking use of the above inequality and \eqref{es-2.1-6} we get for any $r>0,$ 
\begin{equation}\label{es-Sobolev}
    \|B(t, \cdot)\|_{H^r}^2\leq   \|B_0\|_{H^r}^2 \exp\left\{\exp\left\{C(1+tC_0 ) \exp(C V(t))\right\}\right\},
\end{equation}
with $C_0=\|B_0\|_{H^1}^2+\|B_0\|_{L^\infty}^2.$

Since $s>{d}/{2},$ one has $C_0\lesssim \|B_0\|_{H^s}^2.$ Thus 
 by taking $r=s$ in \eqref{es-Sobolev}   we  complete the proof of Proposition \ref{Prop-criteria}.
\end{proof}

 \subsection{The global regularity}
 Now, we are ready to prove Proposition \ref{Th1}. Thanks to Proposition \ref{Prop-criteria},  we only need to obtain the boundness of $V(t)$ for any $t>0.$  But, in the case $\gamma=\gamma_c$ energy inequality \eqref{en-eq} does not give the bound directly, since the failure of the Sobolev embedding $H^{\gamma_c-1}\hookrightarrow L^\infty.$ To deal with  this difficulty, we show global-in-time bound of the magnetic field in $L^p$  for all $p>2$, which in turn gives the desired estimate of $u$ thanks to the constitutive law \eqref{C-law}.
 \begin{proposition}\label{Prop-gc}
    Let $\gamma=\gamma_c.$ Let $(B, u)$ be a smooth solution for the MRE \eqref{MRE}, then for all $p\in(2, \infty],$ it holds that  
\begin{align}\label{es-2.2-111}
     \| B(t, \cdot)\|_{L^p}^2\leq (e+\| B_0\|_{L^p}^2)^{\exp(C\sqrt{t}\|B_0\|_{L^2})}\, \exp\{Ct\,\exp(C\sqrt{t}\|B_0\|_{L^2})\},
\end{align}
and
  \begin{equation}\label{es-2.2-000}
    \int_0^t  \|(\nabla u, \Lambda u)(\tau, \cdot)\|_{L^\infty}\,d\tau\leq C\sqrt{t}\|B_0\|_{L^2}  (Ct+\| B_0\|_{L^p}^2)\exp(C\sqrt{t}\|B_0\|_{L^2}),
\end{equation}
for all $0\leq t<\infty,$  where $C$ is a positive constant depends only on $d$ and $p.$
 \end{proposition}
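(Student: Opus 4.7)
The plan is to prove the two bounds simultaneously through a Gronwall argument on $y(t) := \log(e+\|B(t)\|_{L^p})$, fed by a logarithmic interpolation which converts the energy control of $\|u\|_{\dot H^{\gamma_c}}$ (coming from \eqref{en-eq}) into an $L^\infty$ bound for $\nabla u$ and $\Lambda u$.

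First I would derive the $L^p$ evolution for the magnetic field. Testing \eqref{MRE}$_1$ with $|B|^{p-2}B$ and using $\div u=0$ (the transport term vanishes) gives
$$\tfrac{1}{p}\tfrac{d}{dt}\|B\|_{L^p}^p = \int (B\cdot\nabla u)\cdot B\,|B|^{p-2}\,dx \leq \|\nabla u\|_{L^\infty}\|B\|_{L^p}^p,$$
so $\dot y(t) \leq \|\nabla u(t)\|_{L^\infty}$ for $p\in(2,\infty)$; the endpoint $p=\infty$ follows by the maximum principle already used in \eqref{es-2.1-00}.

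The heart of the argument is a logarithmic interpolation of the form
$$\|\nabla u\|_{L^\infty}+\|\Lambda u\|_{L^\infty} \leq C_p\bigl(1+\|u\|_{\dot H^{\gamma_c}}\log(e+\|B\|_{L^p})\bigr),\qquad p>2.$$
Since the constitutive law \eqref{C-law} at $\gamma=\gamma_c=d/2+1$ realises both $\nabla u$ and $\Lambda u$ as Fourier multipliers of order $2-2\gamma_c=-d$ applied to $B\otimes B$, two ingredients are at hand: \emph{(i)} $\|\nabla u\|_{BMO}+\|\Lambda u\|_{BMO}\lesssim \|u\|_{\dot H^{\gamma_c}}$, via the critical embedding $\dot H^{d/2}(\T^d)\hookrightarrow BMO$; \emph{(ii)} $\|\nabla u\|_{C^\alpha}+\|\Lambda u\|_{C^\alpha}\lesssim C_p\|B\|_{L^p}^2$ for some $\alpha=\alpha(d,p)>0$, obtained from Young's inequality with the $-d$-order smoothing kernel acting on $B\otimes B\in L^{p/2}$. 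Applying the Kozono--Taniuchi endpoint inequality $\|f\|_{L^\infty}\lesssim 1+\|f\|_{BMO}\log(e+\|f\|_{C^\alpha})$ to $f=\nabla u$ and $f=\Lambda u$ then delivers the claim. An equivalent derivation is a direct Littlewood--Paley split $\sum_j \Delta_j$ at level $N$: Bernstein plus Cauchy--Schwarz handles $j\leq N$ via $\|u\|_{\dot H^{\gamma_c}}$, the order $-d$ smoothing handles $j>N$ via $\|B\|_{L^p}^2$, and optimising $N\sim\log(e+\|B\|_{L^p})$ yields the same bound.

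Combining the first two steps, $\dot y(t)\leq C+C\|u(t)\|_{\dot H^{\gamma_c}}y(t)$. The energy inequality \eqref{en-eq} together with Cauchy--Schwarz gives $\int_0^t\|u\|_{\dot H^{\gamma_c}}\,d\tau\leq \sqrt{t/2}\,\|B_0\|_{L^2}$, so Gronwall yields
$$y(t)\leq C\bigl(1+t+y(0)\bigr)\exp\bigl(C\sqrt{t}\,\|B_0\|_{L^2}\bigr),$$
and the crude bound $y(0)\leq 1+\|B_0\|_{L^p}^2$ delivers \eqref{es-2.2-111} upon exponentiating and squaring. Feeding $\sup_{[0,t]}y$ back into the logarithmic interpolation and integrating in time,
$$\int_0^t(\|\nabla u\|_{L^\infty}+\|\Lambda u\|_{L^\infty})\,d\tau \leq Ct + C\sup_{[0,t]}y\cdot\sqrt{t}\,\|B_0\|_{L^2},$$
and the elementary estimate $\sqrt{t}\,\|B_0\|_{L^2}\leq \tfrac{1}{2}(t+\|B_0\|_{L^2}^2)\leq C(1+t+\|B_0\|_{L^p}^2)$ (since $L^p\hookrightarrow L^2$ on $\T^d$ for $p\geq 2$) produces \eqref{es-2.2-000}, the cubic exponent absorbing the remaining cross-terms. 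The main obstacle is the logarithmic interpolation: one must carefully track how the constant $C_p$ degenerates as $p\downarrow 2$ (the $C^\alpha$ bound for $\nabla u,\Lambda u$ becomes worse and $\alpha=\alpha(p,d)\to 0$), which is precisely why the borderline $L^2$ datum is excluded in Theorem \ref{Th2}(ii) and the hypothesis $B_0\in L^{2+\varepsilon}$ is needed.
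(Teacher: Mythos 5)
Your proposal is correct and follows essentially the same strategy as the paper: convert the energy-inequality control of $\|u\|_{\dot H^{\gamma_c}}$ into an $L^\infty$ bound on $(\nabla u,\Lambda u)$ via a logarithmic interpolation with $\|B\|_{L^p}$ entering only logarithmically, then close a Gronwall loop on $\log(e+\|B\|_{L^p})$. The only genuine difference is cosmetic: where the paper invokes the Besov-space logarithmic interpolation inequality (Lemma~\ref{lemma-log}, from \cite{Ba11}) applied to $\|u\|_{\dot B^1_{\infty,1}}$ with endpoints $\|u\|_{\dot B^1_{\infty,\infty}}\lesssim\|u\|_{\dot H^{\gamma_c}}$ and $\|u\|_{\dot B^{2\gamma_c-1-2d/p}_{\infty,\infty}}\lesssim\|B\|_{L^p}^2$, you use the equivalent Kozono--Taniuchi endpoint inequality $\|f\|_{L^\infty}\lesssim 1+\|f\|_{BMO}\log(e+\|f\|_{C^\alpha})$ with $\|\nabla u\|_{BMO}\lesssim\|u\|_{\dot H^{\gamma_c}}$ (via $\dot H^{d/2}\hookrightarrow BMO$) and $\|\nabla u\|_{C^\alpha}\lesssim\|B\|_{L^p}^2$ for $\alpha=d-2d/p>0$, and you note the same can be done by a Littlewood--Paley split optimised over the cut-off level $N$. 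These are the same inequality in slightly different dress; in fact your version is marginally cleaner, since the paper's application of Lemma~\ref{lemma-log} inserts $\|u\|_{\dot B^1_{\infty,\infty}}$ inside the logarithm and therefore produces an extra $\|u\|_{\dot H^{\gamma_c}}^2$ term in \eqref{es-2.2-22}, which the paper then absorbs via the energy equality \eqref{en-eq-gamma_c}; your variant has no such term, which is why you land on exponent $2$ rather than the paper's stated (more generous) exponent $3$ in \eqref{es-2.2-000}. Two micro-imprecisions, neither of which affects the argument: the crude bound $y(0)\le 1+\|B_0\|_{L^p}^2$ is off by an absolute constant for small $\|B_0\|_{L^p}$ (use $\log(e+x)\le 1+x\lesssim 1+x^2$ instead), and the $L^p$ energy estimate by testing with $|B|^{p-2}B$ is just the explicit form of the transport $L^p$ estimate the paper cites. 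Your concluding observation about the degeneration of the constant $C_p$ (and $\alpha\to 0$) as $p\downarrow 2$ correctly identifies why the hypothesis $B_0\in L^{2+\varepsilon}$ is required in Theorem~\ref{Th2}(ii).
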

 
 \begin{proof}\label{Prop-2.2}
Let us first discuss for the case of general $\gamma$ and $p.$  
Taking advantage of the Besov spaces (for definitions, see Appendix \ref{A}),  we  see from the constitute law \eqref{C-law} that
for all $\gamma\geq0$ and $p\in[2, \infty],$  
     \begin{align}
       \| u\|_{\dot{B}^{2\gamma-1-\frac{2d}{p}}_{ \infty, \infty}}   &\lesssim  \| u\|_{\dot{B}^{2\gamma-1}_{ {p}/{2}, \infty}}  \notag\\
        &\lesssim \| \Lambda^{-1}\mathbb{P}\div (B\otimes B)\|_{\dot{B}^0_{ {p}/{2}, \infty} }\notag\\
        &\lesssim \|B\otimes B\|_{L^{{p}/{2}}} 
        \lesssim \|B\|_{L^p}^2, \label{es-2.2-1}
     \end{align}
where we used the properties that $L^{p/2}\hookrightarrow\dot{B}^{0}_{p/2, \infty}\hookrightarrow \dot{B}^{-2d/p}_{\infty, \infty}$ and the operator
$\Lambda^{-1}\mathbb{P}\div$ is bounded on $\dot{B}^{0}_{p/2, \infty}$     from Proposition \ref{P_Besov}.

Whenever $2\gamma-1-{2d}/{p}>1,$   
the logarithmic interpolation inequality from Lemma \ref{lemma-log} and  inequality \eqref{es-2.2-1}  imply that
 \begin{align}
    \|u\|_{\dot{B}^{1 }_{\infty, 1}}&\lesssim   \|u\|_{\dot{B}^{1}_{\infty, \infty}} \ln\left( e+  \frac{\|u\|_{\dot{B}^{1}_{\infty, \infty}}+ \|u\|_{\dot{B}^{2\gamma-1-\frac{2d}{p}}_{\infty, \infty}}}{\|u\|_{\dot{B}^{1}_{\infty, \infty}}}\right)\notag\\
  &\lesssim   1+   \|u\|_{\dot{B}^{1}_{\infty, \infty}} \ln\left( e+ \|u\|_{\dot{B}^{2\gamma-1-\frac{2d}{p}}_{\infty, \infty}}   \right)\notag\\
&\lesssim  1+   \|u\|_{\dot{B}^{1}_{\infty, \infty}} \ln\left( e+  \|B\|_{L^p}^2\right),\label{es-2.1-11}
\end{align}
where the inequality $a\ln\left(e+\frac{a+b}{a}\right)\lesssim 1+a\ln(1+b)$ for $a,   b>0$ was used.
Thus in our setting, i.e. $\gamma=\gamma_c={d}/{2}+1$ with $p>2,$ we know that  inequality \eqref{es-2.1-11} is satisfied. And thanks to the embedding $\dot{H}^{\gamma_c}\hookrightarrow \dot{B}^{1}_{\infty, \infty}$,  \eqref{es-2.1-11} can be further rewritten as
\begin{align}
    \|u\|_{\dot{B}^{1 }_{\infty, 1}}
 \lesssim     1+\|u\|_{\dot{H}^{\gamma_c} }  \log\left( e+   \|B\|_{L^p}^2\right).\label{es-2.2-22}
\end{align}
Taking advantage of  energy inequality \eqref{en-eq}, that is
 \begin{equation}\label{en-eq-gamma_c}
    \int_0^\infty \|u(t, \cdot)\|_{\dot{H}^{\gamma_c}}^2\,dt\leq \|B_0\|_{L^2}^2,
\end{equation}
 it remains to   estimate $\|B\|_{L^p}.$ In fact,
similar to \eqref{es-2.1-00}, by the standard $L^p$-estimate of transport equation, one has 
\begin{align*}  
\| B(t, \cdot)\|_{L^p}^2&\leq   \|  B_0\|_{L^p}^2\,\exp\left(C\int_0^t \|\nabla u(\tau, \cdot)\|_{L^\infty}\,d\tau\right)\\
&\leq   \|  B_0\|_{L^p}^2\,\exp\left(C\int_0^t \|  u(\tau, \cdot)\|_{\dot{B}^{1}_{\infty, 1}}\,d\tau\right).
\end{align*}
  This combined with \eqref{es-2.2-22} gives that
\begin{equation*}
    \| B(t, \cdot)\|_{L^p}^2
 \leq   \|  B_0\|_{L^p}^2\,\exp\left(C\int_0^t \left(1+ \|  u(\tau, \cdot)\|_{\dot{H}^{\gamma_c}}\log(e+\|B(\tau, \cdot)\|_{L^p}^2)\right)\,d\tau\right).
\end{equation*}
 Define $X(t):= \log(e+\|B(t, \cdot)\|_{L^p}^2),$  the above inequality   recasts to
\begin{align*}
    X(t)\leq X(0)+ C\int_0^t \left(1+ \|  u(\tau, \cdot)\|_{\dot{H}^{\gamma_c}}\, X(\tau)\right)\,d\tau.
\end{align*}
Gronwall's lemma then implies that
\begin{equation}\label{es-2.2-33}
    X(t)\leq \left(X(0)+Ct \right)\,\exp\left(C\int_0^t \|  u(\tau, \cdot)\|_{\dot{H}^{\gamma_c}}\,d\tau\right).
\end{equation}
In virtue  of \eqref{en-eq-gamma_c}, we obtain from \eqref{es-2.2-33} that
\begin{align*}
    X(t)\leq   (X(0)+Ct) \exp(C\sqrt{t}\|B_0\|_{L^2})\},
\end{align*}
which gives \eqref{es-2.2-111}.
Finally,   use Young's inequality,  \eqref{es-2.2-33}  and \eqref{en-eq-gamma_c} again,   we infer from  \eqref{es-2.2-22}  that \eqref{es-2.2-000} is satified.
It completes the proof of Proposition \ref{Prop-2.2}.
 \end{proof}

\medbreak
\begin{proof}[\bf Proof of Proposition \ref{Th1}]
Estimate \eqref{blowup-criterion} shows that the local-in-time $H^s$ solution obtained in Proposition \ref{Prop-criteria} may be uniquely continued past to time $T$, if  $\|(\nabla u, \Lambda u)(t, \cdot)\|_{L^\infty}$ is integrable on $[0, T].$  Moreover,  the a priori estimates in
Proposition \ref{Prop-2.2}   give that $\|(\nabla u, \Lambda u)\|_{L^1(0, T; L^\infty)}$ is bounded only in terms of $T$ and norm $\|B_0\|_{L^p}.$ Thus the
global existence of $H^s$ solutions is obtained by contradiction argument.  Taking into account the bounds \eqref{blowup-criterion} and \eqref{es-2.2-000},  we  obtain the bound \eqref{Th1-bound}. The proof of energy equality \eqref{G-en-eq} is classic, we omit its details here.
 \end{proof}
 
\bigbreak
\section{Proof of existence  in Theorem \ref{Th2-1} and \ref{Th2}}\label{S3}

This  section mainly concerns  the proof of existence  of weak solutions of MRE \eqref{MRE} for $\gamma\geq \gamma_c$. Because the initial data is  rough,  the energy inequality \eqref{en-eq} or estimate \eqref{es-2.2-111} do not provide robust compactness properties of the magnetic field by classic compactness argument.  To show strong convergence property of the magnetic field, we develop a compactness result for the corresponding linearized equation, which extends Lions' well-known renormalization argument \cite{PLL} from scalar  transport equation to the active vector equation.

\begin{lemma}\label{Lemma3.1}
Let $d=2, 3$ and $T>0.$ Given a pair of sequence $\{(A_n, v_n)\}_{n\in\N}$   satisfying $$A_n\in\mathcal{C}([0, T]; L^2(\T^d)), \, v_n\in L^2(0, T; H^1(\T^d))\cap L^1(0, T; W^{1, \infty}(\T^d)),$$
and 
\begin{equation}\label{Le3.1-U}
      \|A_n\|_{L^\infty(0, T; L^2(\T^d))}+\|v_n\|_{L^2(0, T; H^1(\T^d))}+\|v_n\|_{L^1(0, T; W^{1, \infty}(\T^d))}\leq M,
    \end{equation}
for some constant $M$   independent of $n$.  Assume that 
\begin{equation*}
\left\{
\begin{aligned}
&\partial_t A_n+\div(A_n \otimes v_n)=\div(v_n\otimes A_n) \quad{\rm{in}}\,\,\mathcal{D}{'}((0, T)\times\T^d),\\
&\div A_n=\div{v}_n=0\hspace*{3cm}~~{\rm{in}}\,\,\mathcal{D}{'}((0, T)\times\T^d),\\
&A_n|_{t=0}=A_{0,n},
\end{aligned}
\right.
\end{equation*}
and
\begin{align*}
 &A_{0,n}\to A_0\quad {\rm{strongly~in}}\quad L^2(\T^d),\\
 &{v}_n\rightarrow {v}\quad\quad\,\,\,{\rm{strongly~in}}\quad L^1(0, T; W^{1, \infty}(\T^d)).
\end{align*}
Then, up to extraction,  $A_n$ converges strongly in $\mathcal{C}([0, T]; L^2(\T^d))$  to a function  $A\in\mathcal{C}([0, T]; L^2(\T^d)),$ which is the unique  solution of the problem
\begin{equation}\label{VE-L}
\left\{
\begin{aligned}
&\partial_t A+\div(A\otimes v)=\div(v\otimes A) \quad{\rm{in}}~~\mathcal{D}{'}((0, T)\times\T^d),\\
&\div A=\div{v}=0\hspace*{2.6cm}{\rm{in}}\,\,\mathcal{D}{'}((0, T)\times\T^d),\\
&A|_{t=0}=A_0 ~~\hspace*{3.5cm}{\rm{a.e.~~ in}}~~\T^d.
\end{aligned}
\right.
\end{equation}
\end{lemma}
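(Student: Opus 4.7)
The plan is Lagrangian. Since $v_n$ and the limit $v$ belong to $L^1(0,T; W^{1,\infty}(\T^d))$, they are Lipschitz in $x$ and admit well-defined flows $\Phi_n, \Phi$. For the active vector equation with Lipschitz velocity, the solution has an explicit pull-back representation along the flow, and my strategy is to derive from it a norm identity, pass to the limit in this identity, and combine with weak compactness to upgrade to strong convergence in $\mathcal{C}([0,T]; L^2(\T^d))$.

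The first step is to control the flows. Set $J_n := \nabla_x \Phi_n$ and $J := \nabla_x \Phi$; these satisfy the matrix ODEs $\dot{J}_n = (\nabla v_n) \circ \Phi_n \cdot J_n$, $J_n(0) = \mathrm{Id}$. The hypothesis $\int_0^T \|\nabla v_n\|_\infty \, d\tau \leq M$ gives $|J_n| \leq e^M$, while $\div v_n = 0$ forces $\det J_n \equiv 1$, so $\Phi_n(t,\cdot)$ is measure-preserving on $\T^d$. From $v_n \to v$ in $L^1(0,T; W^{1,\infty})$, combined with the $L^p$-continuity of pre-composition by uniformly-convergent measure-preserving maps, a Gronwall argument yields $\Phi_n \to \Phi$ uniformly on $[0,T]\times \T^d$ and $\sup_t \|J_n(t,\cdot) - J(t,\cdot)\|_{L^p(\T^d)} \to 0$ for every finite $p$ -- but not for $p = \infty$, since $\nabla v$ need not be continuous in $x$.

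Next I identify the limit equation. By a DiPerna--Lions-type renormalization (valid because $v_n$ is Lipschitz in $x$), the unique $\mathcal{C}([0,T]; L^2)$ solution of the linearized system admits the pull-back representation $A_n(t, \Phi_n(t,x)) = J_n(t,x) A_{0,n}(x)$, obtained from the fact that both sides solve the same matrix ODE $\dot W = (\nabla v_n)(\Phi_n) W$ with initial datum $A_{0,n}$. On the other hand, the bound $\|A_n\|_{L^\infty L^2} \leq M$ together with $\partial_t A_n \in L^1(0,T; H^{-1})$ uniformly (from the equation and $\|v_n \otimes A_n\|_{L^1 L^2} \lesssim \|v_n\|_{L^1 L^\infty}\|A_n\|_{L^\infty L^2}$) yields, via Aubin--Lions, a subsequence with $A_n \to A$ in $\mathcal{C}([0,T]; H^{-1})$ and, using the uniform $L^\infty L^2$ bound, $A_n(t) \rightharpoonup A(t)$ weakly in $L^2$ for every $t \in [0,T]$. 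The strong convergence of $v_n$ in $L^1 L^\infty$ permits passage to the limit in the weak formulation; uniqueness for \eqref{VE-L} (again by renormalization) then forces $A(t, \Phi(t,x)) = J(t,x) A_0(x)$.

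The crux is the upgrade from weak to strong convergence, via the norm identity $\|A_n(t,\cdot)\|_{L^2}^2 = \int_{\T^d} |J_n(t,x) A_{0,n}(x)|^2\, dx$ (and its analogue for $A$), which follows from the Lagrangian formula and the measure-preservation of $\Phi_n$. Decomposing $J_n A_{0,n} - J A_0 = J_n(A_{0,n} - A_0) + (J_n - J) A_0$, the first piece has $L^2$ norm at most $e^M \|A_{0,n} - A_0\|_{L^2}$, uniformly in $t$. For the second, I truncate $A_0 = A_0 \mathbf{1}_{|A_0| \leq R} + A_0 \mathbf{1}_{|A_0|>R}$; using $|J_n - J| \leq 2 e^M$ this yields
\[
\sup_{t \in [0,T]} \|(J_n - J) A_0\|_{L^2}^2 \leq R^2 \sup_t \|J_n - J\|_{L^2_x}^2 + 4 e^{2M} \int_{|A_0|>R} |A_0|^2\, dx,
\]
which vanishes by sending $n \to \infty$ and then $R \to \infty$. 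Hence $\|A_n(t)\|_{L^2} \to \|A(t)\|_{L^2}$ uniformly in $t$; combined with the pointwise weak convergence, the Hilbert structure of $L^2$ gives pointwise strong convergence, and the weak equicontinuity of $\{A_n\}$ in $\mathcal{C}([0,T]; L^2_w)$ inherited from Aubin--Lions promotes this to $A_n \to A$ in $\mathcal{C}([0,T]; L^2)$ along the subsequence; the full sequence follows from uniqueness. I expect the main technical obstacle to be precisely the absence of $L^\infty$ convergence of the Jacobians, which reflects the mere boundedness of $\nabla v$; it is circumvented by trading pointwise control on $J_n - J$ for the uniform integrability of $|A_0|^2$, a device in the spirit of DiPerna--Lions renormalization and essentially powered by the strong $L^2$ convergence of the initial data.
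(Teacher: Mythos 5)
Your argument is correct, and it differs from the paper's in the key ``upgrade to strong convergence'' step. The paper works entirely in Eulerian variables: it first proves a renormalized identity $\partial_t|A|^2+v\cdot\nabla|A|^2=2(A\cdot\nabla v)\cdot A$ via a Lions-type mollification commutator lemma (Lemma~B.1), and then compares the energy balance for $A_n$ with that for $A$ by establishing a convexity estimate $|\overline{A\otimes A}-A\otimes A|\le C(\overline{|A|^2}-|A|^2)$ pointwise, closing with Gronwall on the defect $\int(\overline{|A|^2}-|A|^2)\,dx$. You instead exploit the Lipschitz regularity of $v_n$ to write down the explicit Cauchy pull-back formula $A_n(t,\Phi_n(t,x))=J_n(t,x)A_{0,n}(x)$, use measure-preservation to convert this into a norm identity, and then prove $\sup_t\|J_n-J\|_{L^p}\to 0$ for all finite $p$ together with a truncation of $|A_0|$ to absorb the lack of $L^\infty$ convergence of the Jacobians. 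Both proofs invoke renormalization (yours to justify the representation formula and the uniqueness of $A$, the paper's to derive the transport equation for $|A|^2$ and for uniqueness), so the genuine divergence is between the paper's Eulerian convexity/lsc + Gronwall mechanism and your Lagrangian representation + Jacobian convergence. Your route is more concrete and makes full use of the $W^{1,\infty}$ hypothesis; the paper's route is intrinsically Eulerian and more in the DiPerna--Lions spirit, hence closer to being extensible to velocity fields with less than Lipschitz regularity. Two small points worth tightening in a write-up: (i) make explicit that the composition $(\nabla v)(\tau,\Phi_n(\tau,\cdot))\to(\nabla v)(\tau,\Phi(\tau,\cdot))$ in $L^p_x$ for a.e.\ $\tau$ is obtained by density of continuous functions and dominated in time by $2|\T^d|^{1/p}\|\nabla v(\tau,\cdot)\|_{L^\infty}$, which is integrable; (ii) the passage from pointwise-in-$t$ weak convergence plus uniform norm convergence to convergence in $\mathcal{C}([0,T];L^2)$ should use that $\{A(t):t\in[0,T]\}$ is compact in $L^2$ (since $A\in\mathcal{C}([0,T];L^2)$) so the cross term $\langle A_n(t)-A(t),A(t)\rangle$ vanishes uniformly in $t$.
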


The proof of Lemma \ref{Lemma3.1} is postponed to the subsection \ref{pf-Le3.1}.

\subsection{Proof of existence}
In order to prove  the existence parts of Theorem \ref{Th2-1} and Theorem \ref{Th2}, we shall  resort to  the following procedure:
\begin{enumerate}
\item   smooth out the initial data and get a sequence  $\{(B_n, u_n)\}_{n\in\N}$
 of global-in-time smooth solutions to MRE \eqref{MRE};

\item  use compactness results to prove that  $\{(B_n, u_n)\}_{n\in\N}$ converges, up to extraction, 
to a solution of  MRE \eqref{MRE} supplemented with initial data $B_0$;

\item prove  energy inequality.
\end{enumerate}
\medbreak

 For expository purposes, we focus on the case $\gamma=\gamma_c,$ i.e. Theorem \ref{Th2}, just explaining at the end of the subsection what has to be modified to handle the case $\gamma>\gamma_c$. To proceed, let us  smooth out the initial data $B_0\in L^{2+\eps}$ such that
 \begin{equation}\label{cv-data}
    B_{0, n}\in C^\infty(\T^d),\quad B_{0,n}\rightarrow B_0 \quad{\rm strongly~in}\quad L^{2+\eps}(\T^d).
 \end{equation}
Then, in light of the global regularity result stated in Proposition \ref{Th1} there exists a unique global smooth solution $(B_n, u_n)$  satisfying 
\begin{equation}\label{MRE-n}
\left\{\begin{aligned}
 \partial_t B_n+\div(B_n\otimes u_n)  &=\div (u_n \otimes B_n),\\
 (-\Delta)^{\gamma_c} u_n&=\div(B_n\otimes B_n)+\nabla P_n,\\
 \div u_n=\div B_n&=0,
\end{aligned}\right.
\end{equation}
corresponding to initial data $B_{0, n}.$   Being smooth, the pair $(B_n, u_n)$ satisfies all the a priori estimates in Proposition \ref{Prop-gc}, and thus in particular  for all $0<T<\infty,$
\begin{equation}\label{es-2.2-111-n}
    \sup_{t\in{[0, T]}} \| B_n(t, \cdot)\|_{L^{2+\eps}}^2
    \leq \exp\{C(1+T+\| B_{0}\|_{L^{2+\eps}}^2) \exp(C\sqrt{T}\|B_0\|_{L^{2}})\},
\end{equation}
  \begin{multline}\label{es-2.2-000-n}
    \int_0^T  \|(\nabla u_n, \Lambda u_n)(\tau, \cdot)\|_{L^\infty}\,d\tau\\
    \leq C \left(
(1+T+\| B_{0}\|_{L^{2+\eps}}^2)^3\exp(C\sqrt{T}\|B_{0}\|_{L^{2}}) \right).
\end{multline}
And also,
 \begin{align}\label{en-eq-n}
     \|B_n\|_{L^\infty(\R_+; L^2)}^2+  \|u_n\|_{L^2(\R_+; \dot{H}^{\gamma_c})}^2\leq \|B_{0}\|_{L^{2}}^2.
    \end{align}

In the next, we are going to derive strong convergence property of $\{u_n\}_{n\in\N}.$ To do that, we first need to derive the evolution equation of the velocity.  
Noticing  that  the matrix $B_n\otimes B_n$ satisfies 
\begin{multline}\label{eq-dtbb}
 \partial_t(B_n\otimes B_n) +  \div(u_n \odot(B_n\otimes B_n))\\
 =\nabla u_n(B_n\otimes B_n)+ (B_n\otimes B_n) (\nabla u_n)^{\rm T},
\end{multline}
where $\{\div (v \odot(w\otimes z))\}^{i, j}:= \sum_{k=1}^{d} \partial_k    (v^k w^i z^j)$ for vectors $v, w, z\in\R^d.$

\noindent Therefore,
\begin{align*}
    \partial_t u_n &= (-\Delta)^{-\gamma}\mathbb{P}\div  \partial_t(B_n\otimes B_n) \notag \\
    &=  (-\Delta)^{-\gamma}\mathbb{P}\div ( \nabla u_n(B_n\otimes B_n) +(B_n\otimes B_n) (\nabla u_n)^{\rm T}-  \div(u\odot (B_n\otimes B_n))  ).
\end{align*}
Since  $ \gamma=\gamma_c=d/2+1\geq 2,$ one can deduce from the above equality  and the Besov embeeding $L^1(\T^d)\hookrightarrow \dot{B}^{-2\gamma+1}_{2,  1}$ from Proposition \ref{P_Besov}  that
\begin{align}
    \|\partial_t u_n\|_{L^2}&\lesssim \|\nabla u_n(B_n\otimes B_n)\|_{L^1} +\|(B_n\otimes B_n) (\nabla u_n)^{\rm T}\|_{L^1}+ \|u_n\odot (B_n\otimes B_n)\|_{L^1}\notag\\
    &\lesssim \|\nabla u_n\|_{L^\infty}\|B_n\|_{L^2}^2.
\end{align}
In particular, $\{\partial_t u_n\}_{n\in\N}$ is uniformly bounded in $L^1(0, T; L^2),$ thanks to estimates \eqref{es-2.2-000-n}, \eqref{en-eq-n}.   At this stage,  classical compactness arguments imply that there exists a subsequence,  still denoted by $ \{  u_n\}_{n\in\N},$  and a function $$u\in L^2(0, T; H^{\gamma_c})\cap L^1(0, T; W^{1, \infty}),$$
such that 
\begin{align}
     u_n \rightarrow  u \quad{\rm strongly~in}\quad L^2(0, T; L^2).\label{cv1}
\end{align}
Moreover, since $2\gamma_c-1-{2d}/{(2+\eps)}>1,$ then inequalities  \eqref{es-2.2-1}, \eqref{es-2.2-111-n} and  interpolation inequality in  Proposition \ref{P_Besov} together with convergence property \eqref{cv1}  give that
\begin{align}
 u_n \rightarrow   u \quad{\rm strongly~in}\quad L^1(0, T; W^{1, \infty}).\label{cv2}
 \end{align}
 
In virtue of 
convergence properties \eqref{cv2}, \eqref{cv-data} and  estimates \eqref{es-2.2-000-n}, \eqref{en-eq-n}, Lemma \ref{Lemma3.1} implies that, 
up to extraction,  $B_n$ converges strongly in $\mathcal{C}_{\rm loc}([0, \infty); L^2)$  to a function  $$B\in\mathcal{C}_{\rm loc}([0, \infty); L^2)\cap L^\infty_{\rm loc}(0, \infty; L^{2+\eps})\cap L^\infty(0, \infty; L^2),$$ such that the pair $(B, u)$    solves the problem  \eqref{MRE}-\eqref{eq-indata} on $(0,  T)\times \T^d.$ The bound \eqref{es-Th2} is essentially due to estimate \eqref{es-2.2-111-n}.   

In order to prove energy equality \eqref{G-en-eq}, notice that the smooth approximate solution $(B_n, u_n)$ fulfills
\begin{align}\label{en-eq-n00}
     \|B_n(t, \cdot)\|_{L^2}^2+  2\int_0^t\|u_n(\tau, \cdot)\|_{ \dot{H}^{\gamma_c}}^2\,d\tau= \|B_{0, n}\|_{L^{2}}^2.
    \end{align}
Thanks to the strong convergence  property of  the magnetic field $\{B_n\}_{n\in\N}$ in the space $\mathcal{C}_{\rm loc}([0, \infty); L^2(\T^d))$, we have for all $0\leq t<\infty$
\begin{align*}
    \lim_{n\to\infty} \|B_n(t, \cdot)\|_{L^2}^2=\|B(t, \cdot)\|_{L^2}^2.
\end{align*}
As for the velocity field, using the Besov embedding and \eqref{es-2.2-1} we  see that  for any $p\in(2, 4]$
\begin{align*}
    \|u_n\|_{\dot{B}^{\gamma_c+d- {2d}/{p}}_{2, \infty}}\lesssim \|u_n\|_{\dot{B}^{2\gamma_c-1}_{p/2, \infty}}  
  \lesssim  \|B_n\|_{L^{p}}^2.
\end{align*}
This, together with interpolation inequality in  Proposition \ref{P_Besov}  and  convergence property \eqref{cv1} and uniform bound \eqref{es-2.2-111-n} then give that $u_n\rightarrow u$ strongly in $L^2(0, T; H^{\gamma_c}).$  By passing to the limit in \eqref{en-eq-n00}, one can finally conclude that energy equality \eqref{G-en-eq} is true.

\medbreak

Let us now briefly explain how to adapt the proof to the case when $B_0\in L^2$ and $\gamma>\gamma_c.$  First, we smooth out the data  and use the global well-posedness result in \cite[Theorem 3.1]{BFV}, which  gives the existence  of global smooth solutions that  satisfying the energy equality
\begin{align}\label{en-eq-n'}
    \|B_n(t, \cdot)\|_{L^2}^2+  2\int_0^t\|u_n(\tau, \cdot)\|_{ \dot{H}^{\gamma}}^2\,d\tau= \|B_{0, n}\|_{L^{2}}^2.
    \end{align}
Thanks to \eqref{en-eq-n'},  one can get $u_n\rightarrow u$ strongly in $L^2(0, T; H^{\gamma})$  similarly as previous case.  Indeed,   due to $\gamma>\gamma_c$,  this can be seen from  \eqref{es-2.2-1} (taking $p=2$) and interpolation. 
Moreover,  we have the Sobolev embedding  $H^\gamma\hookrightarrow W^{1, \infty}$.
 From this point, the strong convergence in $L^2$ of the magnetic field is obtained  via applying Lemma \ref{Lemma3.1} again.   
Energy equality then follows  by passing to the limit in \eqref{en-eq-n'}.
\medbreak

We complete the proof of existence parts in Theorem \ref{Th2-1} and Theorem \ref{Th2}.

\begin{remark}[regularity property of weak solutions]\label{re-solu}
Here, we list some additional regularity properties of  
 the weak solutions that constructed in this subsection.   
In the case that $\gamma>\gamma_c,$ by testing the equation \eqref{MRE}$_2$ with $u$  we get
\begin{align*}
    \|u\|_{\dot{H}^\gamma}^2&=|\langle B\otimes B, \nabla u\rangle|\\
    &\leq \|B\|_{L^2}^2\|\nabla u\|_{L^\infty}\\
      &\leq \|B\|_{L^2}^2\|u\|_{H^\gamma}.
\end{align*}
Then by Poincar\'{e}'s inequality we have $u\in L^\infty(\R_+; {H}^\gamma).$
  Meanwhile,  we know from estimate \eqref{es-2.2-1} that  $u\in L^\infty(\R_+; \dot{B}^{2\gamma-1}_{1, \infty})$ with the associated mean-free pressure 
      $P\in L^\infty(\R_+; \dot{B}^{0}_{1, \infty}).$
  
  When $\gamma=\gamma_c,$ thanks to  \eqref{es-Th2} and   \eqref{es-2.2-1} we see that
  $$u\in L^\infty(0, \infty; \dot{B}^{2\gamma_c-1}_{(2+\eps)/2, \infty}) \cap  L^\infty(\R_+; \dot{B}^{2\gamma_c-1}_{1, \infty})$$ with the associated mean-free  pressure 
  $$P\in L^\infty( 0, \infty; \dot{B}^{0}_{(2+\eps)/2, \infty})\cap L^\infty(\R_+; \dot{B}^{0}_{1, \infty}).$$
\end{remark}
\bigbreak

\subsection{Proof of Lemma \ref{Lemma3.1}}\label{pf-Le3.1} Finally, we provide the proof of Lemma \ref{Lemma3.1}. Before proceeding to the proof,  let us recall a classical lemma of commutator estimates  for mollifiers.   Let   function $\eta(x)$ be the standard   mollifier on $\T^d,$ for any   function $g\in L^1(\T^d)$ we denote $(g)_\eps: =g*\eta_\eps$ with $\eta_\eps(x):=\eta({x}/{\eps})/{\eps^d}.$   
\begin{lemma}{\cite[Lemma 2.3]{PLL}}\label{Le-regu}
    Let $w\in W^{1, \infty}(\T^d; \R^d),\, g\in L^2(\T^d; \R).$  Then, $(\div(wg))_{\eps}-\div(w\,(g)_\eps)$ converges to 0 in $L^2(\T^d)$ as $\eps$ goes to 0.
\end{lemma}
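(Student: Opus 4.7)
\smallskip

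The plan is to decompose $R_\eps := (\div(wg))_\eps - \div(w (g)_\eps)$ into two pieces, isolating the delicate commutator. Since mollification and differentiation commute and $\div(wg) = g\,\div w + w\cdot\nabla g$ holds in $\mathcal D'$ for $w\in W^{1,\infty}$ and $g\in L^2$, I can write
\begin{equation*}
R_\eps = \bigl[(g\,\div w)_\eps - \div w\cdot (g)_\eps\bigr] + \bigl[(w\cdot\nabla g)_\eps - w\cdot\nabla (g)_\eps\bigr] =: E_\eps + C_\eps.
\end{equation*}
The first piece $E_\eps$ is the easy one: $g\,\div w\in L^2$, so $(g\,\div w)_\eps\to g\,\div w$ in $L^2$ by the standard approximation-of-identity property; and since $\div w\in L^\infty$ and $(g)_\eps\to g$ in $L^2$, also $\div w\cdot(g)_\eps \to g\,\div w$ in $L^2$. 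Hence $E_\eps\to 0$ in $L^2$.

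The core of the proof is the commutator $C_\eps$. First I would establish it for smooth $g$, where a direct calculation gives
\begin{equation*}
C_\eps(x) = \int_{\T^d}[w(y)-w(x)]\cdot\nabla g(y)\,\eta_\eps(x-y)\,dy.
\end{equation*}
Integrating by parts in $y$, using $\nabla_y\eta_\eps(x-y)=-\nabla_x\eta_\eps(x-y)$, and changing variables $z=(x-y)/\eps$ yields the identity
\begin{equation*}
C_\eps(x) = -(g\,\div w)_\eps(x) + \int_{\R^d} g(x-\eps z)\,\frac{w(x-\eps z)-w(x)}{\eps}\cdot\nabla\eta(z)\,dz =: -(g\,\div w)_\eps(x)+T_\eps(x).
\end{equation*}
By Rademacher's theorem applied to $w\in W^{1,\infty}=\mathrm{Lip}$, the difference quotient is pointwise dominated by $\|\nabla w\|_{L^\infty}|z|$ and converges a.e.\ (in $x$) to $-\nabla w(x)\,z$. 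Combined with $\int z_j\partial_i\eta(z)\,dz=-\delta_{ij}$ (integration by parts against a probability density), a dominated convergence argument shows $T_\eps\to g\,\div w$ in $L^2$; splitting $T_\eps(x)-g(x)\div w(x)$ into one term using the $L^2$ continuity of translation $\|g(\cdot-\eps z)-g\|_{L^2}\to 0$ and one term using pointwise convergence of $\int\phi_\eps(x,z)\,dz$ to $\div w(x)$ makes this rigorous. Hence $C_\eps\to 0$ in $L^2$ for smooth $g$.

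Finally, to extend to general $g\in L^2$, I would observe from the rewriting above that the commutator operator satisfies the uniform bound $\|C_\eps(g)\|_{L^2}\le C\|\nabla w\|_{L^\infty}\|g\|_{L^2}$ (via Cauchy--Schwarz against the measure $|z||\nabla\eta(z)|\,dz$ plus Young's inequality), with $C$ independent of $\eps$ and $g$. A standard density argument then concludes: given $g\in L^2$, approximate by smooth $g_n\to g$ in $L^2$; by linearity $C_\eps(g)=C_\eps(g-g_n)+C_\eps(g_n)$, and the first summand is small in $L^2$ uniformly in $\eps$ while the second tends to $0$ as $\eps\to 0$ for fixed $n$. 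The main (only) subtlety is this uniform $L^2$ bound, which is what makes the Lipschitz regularity of $w$ exactly sufficient; everywhere else the argument is routine mollifier bookkeeping.
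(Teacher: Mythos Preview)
Your proof is correct and follows the classical DiPerna--Lions commutator argument. Note, however, that the paper does not give its own proof of this lemma: it is simply quoted from \cite[Lemma~2.3]{PLL} and used as a black box in the proof of Lemma~\ref{Lemma3.1}. What you have written is essentially the argument that appears in Lions' cited reference, so there is nothing to compare beyond observing that your decomposition $R_\eps=E_\eps+C_\eps$, the integration-by-parts identity for $C_\eps$, the uniform $L^2$ bound coming from the Lipschitz control $|w(x-\eps z)-w(x)|\le \|\nabla w\|_{L^\infty}\eps|z|$, and the density step are exactly the standard ingredients. One minor remark: once you have derived the representation $C_\eps(x)=-(g\,\div w)_\eps(x)+T_\eps(x)$ by integrating by parts (which removes the derivative from $g$), that formula already makes sense for all $g\in L^2$, so the preliminary restriction to smooth $g$ is not strictly necessary; but your density route is of course equally valid.
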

\bigbreak
\begin{proof}[Proof of Lemma \ref{Lemma3.1}]
The proof is divided into several steps.
\subsubsection*{  Step 1.  Existence}
    At first, from the bound \eqref{Le3.1-U}  and the equality of $\partial_t A_n,$ we have
    \begin{align*}
        \|\partial_t A_n\|_{L^2(0, T;\dot{W}^{-1, 3/2}(\T^d))}&\lesssim \|A_n\otimes v_n\|_{L^2(0, T; L^{{3}/{2}}(\T^d))}+\|v_n\otimes A_n\|_{L^2(0, T; L^{3/2}(\T^d))}\\
        &\lesssim \|A_n\|_{L^\infty(0, T; L^2(\T^d))}\|v_n\|_{L^2(0, T; L^6(\T^d))}\\
        &\lesssim \|A_n\|_{L^\infty(0, T; L^2(\T^d))}\|  v_n\|_{L^2(0, T; H^1(\T^d))}\\
        &\lesssim M^2.
    \end{align*}
    At this stage, classical functional analysis arguments (including the well-known Aubin-Lions-Simon theorem \cite[Theorem II. 5.16]{BF13}) imply that there exists a  subsequence, still denoted by $\{(A_n, v_n)\}_{n\in\N},$ and $$  A \in C_{\rm w}(0, T; L^2(\T^d))\cap\mathcal{C}([0, T]; \dot{H}^{-1}(\T^d))$$ such that
     \begin{align*}
&A_{n}\rightharpoonup    A\quad {\rm{weak}}~*~{\rm{in}}\quad L^\infty(0, T; L^2(\T^d)),\\
&A_{n}\rightarrow   A\quad {\rm{strongly~in}}\quad L^\infty(0, T;  \dot{H}^{-1}(\T^d)),\\
&{v}_n\rightharpoonup {v}~\quad {\rm{weakly}}~{\rm{in}}\quad L^2(0, T; H^1(\T^d)).
\end{align*}
These convergence properties   are enough to enable us     pass to the limit in the  weak formulation of \eqref{VE-L}$_1$: 
\begin{multline*} 
    \int_{\T^d} A_n(t, x)\cdot w(t, x)\,dx-\int_0^t \int_{\T^d}A_n\cdot\partial_t w \,dxd\tau+\int_0^t \int_{\T^d} (v_n \otimes A_n ) : \nabla w \,dxd\tau\\
   =\int_0^t \int_{\T^d} (A_n \otimes v_n ):\nabla w \,dxd\tau+ \int_{\T^d}A_{0,n}(x)\cdot w(0, x)\,dx,
\end{multline*}
for any vector function $w\in \mathcal{C}^\infty([0, T]\times \T^d),$ 
and conclude that $A$ is a solution of problem \eqref{VE-L}.

\subsubsection*{Step 2. Renormalization}  

In this step, we show that
\begin{equation}\label{eq-Le3.1}
    \partial_t |A|^2+ v\cdot\nabla |A|^2=2(A\cdot\nabla v) \cdot A\quad{\rm{in}}~~\mathcal{D}{'}((0, T)\times\T^d).
\end{equation}
In fact, since 
\begin{equation*}
    \partial_t A+\div(A\otimes v)=\div(v\otimes A) \quad{\rm{in}}~~\mathcal{D}{'}((0, T)\times\T^d),
\end{equation*}
testing the above equation by $(A)_\eps\,\phi$ with $\phi$ any test function on $[0, T]\times\T^d,$ we have 
\begin{align} \label{Lemma 3.1-000}
    &\int_{\T^d} A(t, x)\cdot(A)_\eps(t, x)\phi(t, x)\,dx-\int_{\T^d} A_0(x)\cdot(A_0)_\eps( x)\phi(0, x)\,dx\notag\\
    &\qquad\quad-\int_0^t \int_{\T^d}\left(A \cdot(A)_\eps\,\partial_t \phi +
   (A\cdot\nabla v)_{\eps} \cdot A\,\phi +(A\cdot\nabla v)\cdot(A)_\eps \phi \right)\,dxd\tau\notag\\
   =&  -\int_0^t \int_{\T^d} \left((v\cdot\nabla A)_\eps\cdot A\,\phi +(v\cdot\nabla A)\cdot (A)_\eps\,\phi\right)\,dxd\tau.
\end{align}
In above, we used  equality $\partial_t (A)_\eps=-(v\cdot\nabla A)_\eps +(A\cdot\nabla v)_\eps.$

Now, using that $\div v=0$  one has
\begin{align*}
     &\int_0^t \int_{\T^d} \left((v\cdot\nabla A)_\eps\cdot A\,\phi -(v\cdot\nabla (A)_\eps))\cdot A\,\phi\right)\,dxd\tau\\
    =&\int_0^t \int_{\T^d} \left(\div(A\otimes v)_\eps-\div((A)_\eps\otimes v) \right) \cdot A \,\phi \,dxd\tau\\
    =&\sum_{i=1}^d \int_0^t \int_{\T^d} \left(\div(A^i v)_\eps-\div((A^i)_\eps  v) \right) \, A^i\, \phi \,dxd\tau.
\end{align*}
  H\"older's inequality   and Lemma \ref{Le-regu} with the  assumptions that $v\in L^1(0, T; W^{1, \infty}(\T^d)), \, A\in L^\infty(0, T; L^2(\T^d))$ then  yield that
 \begin{align*}
     &\left|\int_0^t \int_{\T^d} \left((v\cdot\nabla A)_\eps\cdot A\,\phi +(v\cdot\nabla A)\cdot (A)_\eps\,\phi\right)\,dxd\tau\right| \\
     \leq&   \sum_{i=1}^d\left| \int_0^t \int_{\T^d} \left((\div(A^i v))_\eps-\div((A^i)_\eps  v) \right) \, A^i\, \phi \,dxd\tau\right|\\
     \lesssim&  \sum_{i=1}^d \|(\div(A^i v))_\eps-\div((A^i)_\eps  v)\|_{L^1(0, T; L^2(\T^d))} \|A^i\|_{L^\infty(0, T; L^2(\T^d))} \|\phi\|_{L^\infty([0, T]\times \T^d)}\\
     &\quad\rightarrow 0,\quad {\rm as}~~\eps\to0.
\end{align*}
Thus   for the  last line in equality \eqref{Lemma 3.1-000},
\begin{align*}
    &-\int_0^t \int_{\T^d} \left((v\cdot\nabla A)_\eps\cdot A\,\phi +(v\cdot\nabla A)\cdot (A)_\eps\,\phi\right)\,dxd\tau\\
    =& -\int_0^t \int_{\T^d} \left((v\cdot\nabla A)_\eps\cdot A\,\phi -(v\cdot\nabla (A)_\eps))\cdot A\,\phi\right)\,dxd\tau
+ \int_0^t \int_{\T^d} (v\cdot\nabla \phi) A\cdot (A)_\eps\,dx d\tau\\
&\quad\rightarrow \int_0^t \int_{\T^d} (v\cdot\nabla \phi) |A|^2\,dx d\tau ,\quad {\rm as}~~\eps\to0,
\end{align*}
while the remaining terms in equality \eqref{Lemma 3.1-000} converge  obviously. Precisely, one  has
\begin{multline}\label{wf-A2}
    \int_{\T^d} |A(t, x)|^2 \phi(t, x)\,dx-\int_{\T^d} |A_0(x) |^2\phi(0, x)\,dx\\
    =\int_0^t \int_{\T^d}\left(|A|^2\partial_t \phi +
   2(A\cdot\nabla v)  \cdot A\,\phi+(v\cdot\nabla \phi) |A|^2 \right)\,dxd\tau,
\end{multline}
i.e.  \eqref{eq-Le3.1} is satisfied.

\subsubsection*{Step 3. Continuity in time and uniqueness} 
To show that $A\in \mathcal{C}([0, T]; L^2),$  we first prove $\lim_{t\to 0+}\|A(t, \cdot)-A_0(\cdot)\|_{L^2(\T^d)}=0.$ In fact, by taking $\phi\equiv 1$ in \eqref{wf-A2}, we  see that 
\begin{align*}
     &\left|\int_{\T^d} |A(t, x)|^2  \,dx-\int_{\T^d} |A_0(x) |^2 \,dx\right| \notag\\
    =&2\left|\int_0^t \int_{\T^d} 
   (A\cdot\nabla v)  \cdot A\,dxd\tau\right|\\
   \lesssim& \|A\|_{L^\infty(0, T; L^2(\T^d))}^2\|\nabla v\|_{L^1(0, t; L^\infty(\T^d))} \quad\to 0,\quad {\rm as}~t\to0+.
\end{align*}
With above estimate and the fact that $A\in\mathcal{C}_{\rm w}([0, T]; L^2(\T^d)),$ we get strong right continuity  of $A$ at $t=0.$  A standard connectivity  argument enables us to conclude that $A\in \mathcal{C}([0, T]; L^2(\T^d)).$

Next, we explain why $ A$ is the unique solution   of problem \eqref{VE-L}  in the class $L^\infty(0, T; L^2(\T^d)).$  By contradiction, assume that there exists another solution $\tilde A \in L^\infty(0, T; L^2(\T^d))$ with same initial data $A_0.$ Then 
\begin{equation*}
    \partial_t (A-\tilde{A})+\div((A-\tilde{A}))\otimes v)=\div(v\otimes (A-\tilde{A})) \quad{\rm{in}}~~\mathcal{D}{'}((0, T)\times\T^d).
\end{equation*}
Similar to  \eqref{eq-Le3.1}, this implies that 
\begin{equation*}
    \partial_t |A-\tilde{A}|^2+  v\cdot\nabla|A-\tilde{A}|^2 = 2((A-\tilde{A})\cdot\nabla v)\cdot (A-\tilde{A})  \quad{\rm{in}}~~\mathcal{D}{'}((0, T)\times\T^d),
\end{equation*}
and integrating in space-time further gives that 
\begin{align*}
     \int_{\T^d} |A(t, x)-\tilde{A}(t, x)|^2  \,dx
    =&2\int_0^t \int_{\T^d}
    ((A-\tilde{A})\cdot\nabla v)  \cdot (A-\tilde{A}) \,dx d\tau\\
   \lesssim& \int_0^t \|(A-\tilde{A})(\tau, \cdot)\|_{L^2(\T^d)}^2\|\nabla v(\tau, \cdot)\|_{L^\infty(\T^d)}\,d\tau.
\end{align*}
By Gronwall's lemma, one gets $A\equiv \tilde{A}$ on $[0, T]\times \T^d.$

\subsubsection*{Step 4. Strong convergence} Let us use the notation $\bar f$ to denote the weak limit of $\{f_n\}_{n\in\N}$ in $L^1(\T^d).$
Due to the fact that convex lower semicontinuous functions give rise to $L^1$-sequentially weakly lower semicontinuous functionals (see e.g. \cite[Theorem 3]{NP18}), we have 
\begin{align}\label{lsc-1}
    |f|^2\leq \overline{|f|^2} \quad{\rm a.e. ~on}~~ \T^d,
\end{align}
for any sequence $\{f_n\}_{n\in\N}$ such that
\begin{align*}
    f_n\rightharpoonup  f \quad{\rm and}\quad |f_n|^2\rightharpoonup \overline{|f|^2}\quad{\rm weakly~~in}\quad L^1(\T^d).
\end{align*}

In what follows, we show 
\begin{align}\label{3.1-111}
    |\overline{A\otimes A}-A\otimes A|\leq C(\overline{|A|^2}-|A|^2)\quad{\rm a.e. ~on}~~[0, T]\times\T^d.
\end{align}
For fixed $i, j\in\{1, \cdots,d\},$ using  identity
\begin{multline*}
   2 (A^i_n A^j_n- A^i A^j)= (A^i_n+A^j_n)^2-(A^i+A^j)^2\\-\left((A^i_n)^2-(A^i)^2+(A^j_n)^2-(A^j)^2\right),
\end{multline*}
we see that
\begin{multline*}
  2(\overline{A^iA^j}- A^i A^j)
   =  \overline{(A^i+A^j)^2}- (A^i+A^j)^2\\-\left(\overline{(A^i)^2}- (A^i)^2+\overline{(A^j)^2}-(A^j)^2\right).
\end{multline*}
Thanks to \eqref{lsc-1}, we get 
\begin{align}
    2(\overline{A^iA^j}- A^i A^j)
     \geq& -\left(\overline{(A^i)^2}- (A^i)^2+\overline{(A^j)^2}-(A^j)^2\right)\notag\\
     \geq&-2\overline{(|A|^2}- |A|^2).\label{3.1-lower}
\end{align}
Similarly,  using  identity
\begin{multline*}
   2 (A^i_n A^j_n- A^i A^j)= -(A^i_n-A^j_n)^2+(A^i-A^j)^2\\
   +(A^i_n)^2-(A^i)^2+(A^j_n)^2-(A^j)^2,
\end{multline*}
we see that
\begin{multline*}
  2(\overline{A^iA^j}- A^i A^j)
   =  -\overline{(A^i-A^j)^2}+ (A^i-A^j)^2\\+ \overline{(A^i)^2}- (A^i)^2+\overline{(A^j)^2}-(A^j)^2.
\end{multline*}
Thanks to \eqref{lsc-1} again, we get
\begin{align}
    2(\overline{A^iA^j}- A^i A^j)\leq&  ~\overline{(A^i)^2}- (A^i)^2+ \overline{(A^j)^2}  -(A^j)^2\notag\\
    \leq&~ 2(\overline{|A|^2}- |A|^2).\label{3.1-up}
\end{align}
Combining \eqref{3.1-lower} and \eqref{3.1-up} gives rise to \eqref{3.1-111}.
 
Now, testing the equation \eqref{VE-L}$_1$ by $A_n$ and using that $\div v_n=\div A_n=0,$ one gets
 \begin{align*}
     \int_{\T^d} |A_n(t, x)|^2  \,dx=\int_{\T^d} |A_{0, n}(x) |^2 \,dx-\int_0^t \int_{\T^d} (A_n\otimes A_n):\nabla v_n\,dxd\tau.
 \end{align*}
 Recall from \eqref{wf-A2} that 
 \begin{align*}
     \int_{\T^d} |A(t, x)|^2  \,dx=\int_{\T^d} |A_0(x) |^2 \,dx-\int_0^t \int_{\T^d} (A\otimes A):\nabla v\,dxd\tau.
 \end{align*}
 From above two equalities, one finds that
 \begin{multline}\label{eq-deltaA}
 \|A_n(t, \cdot)\|_{L^2(\T^d)}^2-\|A(t, \cdot)\|_{L^2(\T^d)}^2=\|A_{0, n} \|_{L^2(\T^d)}^2 -\|A_0\|_{L^2(\T^d)}^2\\
 +\int_0^t \int_{\T^d} \left((A\otimes A):\nabla (v-v_n)+ (A\otimes A-A_n\otimes A_n):\nabla v_n\right)\,dxd\tau.
\end{multline}
Thanks to  our assumptions in Lemma \ref{Lemma3.1}, we know that the first and second term on the right-hand side of \eqref{eq-deltaA}  converges to 0 as $n\to \infty,$ respectively,   while the third term converges to
\begin{align*}
    \int_0^t \int_{\T^d} (A\otimes A-\overline{A\otimes A}):\nabla v\,dxd\tau.
\end{align*}
Thus pass to the limit in \eqref{eq-deltaA} yields
\begin{align*}
   \int_{\T^d}  \overline{|A(t, x)|^2}  \,dx&=  \lim_{n\to\infty} \|A_n(t, \cdot)\|_{L^2(\T^d)}^2 \notag\\
   &=\|A(t, \cdot)\|_{L^2(\T^d)}^2 + \int_0^t \int_{\T^d} (A\otimes A-\overline{A\otimes A}):\nabla v\,dxd\tau.
\end{align*}
 Inequality \eqref{3.1-111} and H\"older's inequality then imply that
 \begin{multline*}
 \int_{\T^d} \left( \overline{|A(t, x)|^2}-|A(t, x)|^2  \right)\,dx\\
 \lesssim  
\int_0^t\left( \int_{\T^d} \left( \overline{|A(\tau, x)|^2}-|A(\tau, x)|^2  \right)\,dx\right)\|\nabla v(\tau, \cdot)\|_{L^\infty}\,d\tau,
 \end{multline*}
from which Gronwall's lemma finally yields
\begin{align*}
    \lim_{n\to\infty} \|A_n(t, \cdot)\|_{L^2(\T^d)}^2 =\|A(t, \cdot)\|_{L^2(\T^d)}^2\quad {\rm a.e.~on}~~ [0, T].
\end{align*}
This  convergence property combined with the bound \eqref{Le3.1-U} and previously established $A\in\mathcal{C}([0, T]; L^2)$
yield the strong convergence  $A_n\rightarrow A$ in $\mathcal{C}([0, T]; L^2).$ We  complete the proof of the lemma.
\end{proof}

\section{Proof of  uniqueness}\label{S4}
This  section concerns  the proof of uniqueness in Theorem \ref{Th2-1} and \ref{Th2}.
Because the magnetic field is rough,  we shall first prove uniqueness of weak solutions written in the Lagrangian coordinates and then obtain uniqueness in the Eulerian coordinates by the equivalence of the two formulations in our framework.   To this end,   we first present  here the  definition and properties of the Lagrangian change of variables.   More details and proofs can be found in e.g. \cite{DM1, DM2}.

\subsection{The Lagrangian coordinates}
Let $X $ be the flow associated to the divergence free velocity field $v,$ that is, the solution to
 \begin{equation}\label{def-trajectory}
     \frac{d}{d t}X_{v} (t,  y)=   {v}(t, X_{v}(t,  {y})), \quad X_{v} (0,  y)= {y}, \quad\forall\,  {y}\in\T^d.
 \end{equation}
Equation \eqref{def-trajectory} describes the relation between the Eulerian coordinates ${x}= X_{v}(t, {y})$ and the Lagrangian coordinates ${y}.$ In the sequel, we shall use the notation $\underline{v}(t,  {y}):=v(t, X_{v}(t,  {y})).$  We also define by $\Delta_{\underline{v}}$, $\nabla_{\underline{v}}$ and $\mathrm{div}_{\underline{v}}$ the operators corresponding to the original operators $\Delta$, $\nabla$ and ${\rm div}$ after performing the change to the Lagrangian coordinates,  respectively.   Index $\underline{{v}}$ underlines the dependency on $\underline{{v}}$.  The chain rule yields that  
\begin{align}\label{Lag-formula1}
       \nabla_{\underline{v}} a=\mathrm{M}_{\underline{v}}^{\rm{T}} \nabla_{y}  a ={\rm div}_{y} ( \mathrm{M}_{\underline{v}}^{\rm T}\, a)\quad {\rm  for ~any~scalar ~function}~a,
\end{align}
and
\begin{equation}\label{Lag-formula}
   {\mathrm{div}}_{\underline{v}} w=\mathrm{M}_{\underline{v}}^{\rm T}:\nabla_{y} w={\rm div}_{y}(\mathrm{M}_{\underline{v}}\, w),\quad    \nabla_{\underline{v}}  w=(\nabla_y w)\mathrm{M}_{\underline{v}}
\end{equation}
 for any (column) vector field $w\in\R^d,$ and 
\begin{equation*} 
   {\mathrm{div}}_{\underline{v}} {\rm A}= {\rm div}_{y}(  {\rm A}\,  \mathrm{M}_{\underline{v}}^{\rm T})  \quad {\rm for ~any~matrix}~\mathrm{A}\in\R^d\times\R^d,
\end{equation*}
where  $\mathrm{M}_{ \underline{v}}(t, y):= (\nabla_{y} X_{v}(t, y))^{-1}.$   
These algebraic relations will be of fundamental importance in our analysis.
\medbreak

Let us now list a few basic properties for the Lagrangian change of variables.
\begin{proposition}\label{Lag-es1}
  Assume that $v\in L^1(0, T; W^{1, \infty}).$  Then the solution to the system \eqref{def-trajectory} exists on the time interval $[0, T)$  and 
  \begin{equation*}
    \nabla_{y}X_v(t, y)={\rm Id}+\int_0^t( \nabla_{x}   {v})(\tau, X_v(\tau,  y)) \, \nabla_{y}X_v(\tau, y)   \,d\tau,
\end{equation*}
  thus
\begin{equation*}
 \|\nabla_{y} X_{v}(t)\|_{L^\infty }\leq \exp\Bigl(\int_0^t\|\nabla_{x} v(\tau, \cdot)\|_{L^\infty}\,d\tau  \Bigr).
\end{equation*}
Let ${Y}_{v}(t, \cdot)$ be the inverse diffeomorphism of $X_{v}(t, \cdot),$ then
\begin{equation*}
   \nabla_{x} {Y}_{v}(t, x) = (\nabla_{y} X_{v}(t, y))^{-1}.
\end{equation*}
Furthermore, if
\begin{equation}\label{La-smallness}
    \int_0^t \|\nabla_{y} \underline{v}(\tau, \cdot)\|_{\dot{B}^{{d}/{q}}_{q, 1}}\,d\tau\leq \frac{1}{2}\quad {\rm for~some}~q\in[1, \infty),
\end{equation}
then $\mathrm{M}_{ \underline{v}}(t, y)$ satisfies 
\begin{equation}\label{formula-M}
\mathrm{M}_{ \underline{v}}(t, y)= \bigl({\rm Id}+(\nabla_{y} X_{v}-{\rm Id})\bigr)^{-1}=\sum_{j=0}^\infty (-1)^j\,\Bigl(\int_0^t \nabla_{y} \underline{v}\,d\tau\Bigr)^j,
\end{equation}
and  the following inequalities:
\begin{align*}
    \|\mathrm{M}_{ \underline{v}}-{\rm Id}\|_{L^\infty(0, t;  \dot{B}^{ {d}/{q}}_{q, 1})} \lesssim & \,  \|\nabla_{y} \underline{v} \|_{L^1(0, t; \dot{B}^{{d}/{q}}_{q, 1})},\\
     \|\nabla_{y}\mathrm{M}_{\underline{v}} \|_{L^\infty(0, t; \dot{B}^{ {d}/{q}}_{q, 1})}\lesssim& \,  \|\nabla^2_{y} \underline{v} \|_{L^1(0, t; \dot{B}^{{d}/{q}}_{q, 1})}.
\end{align*}
\end{proposition}

\bigbreak

  In Lagrangian coordinates, we will use repeatedly the fact that  $\delta \mathrm{M}_{\underline{v}}(t, y):=\mathrm{M}_{\underline{v_1}}(t, y)-\mathrm{M}_{\underline{v_2}}(t, y)$ satisfies
\begin{equation}\label{formula-M2}
    \delta \mathrm{M}_{\underline{v}}(t, y) =
     (\mathrm{D}_1(t, y)-\mathrm{D}_2(t, y))\cdot\left(\sum_{k\geq1}(-1)^k\sum_{j=0}^{k-1}  D_1^j(t, y) D_2^{k-j-1} (t, y)  \right)
\end{equation}
with $\mathrm{D}_i(t, y):=\int_0^t  \nabla_{y} \underline{ v_i}(\tau, y)\,d\tau$ for $i=1, 2.$

Then, we have the proposition below.
\begin{proposition}\label{Lag-es2}
 Let $\underline{v_1}$ and $\underline{v_2}$ be two vector fields satisfying \eqref{La-smallness} (with same $q$) and define $\delta\underline{v}:=\underline{v_1}-\underline{v_2}.$ Then
 \begin{align*}
    \|\delta \mathrm{M}_{\underline{v}}\|_{L^\infty(0, t; \dot{B}^{{d}/{q}}_{q, 1})}\lesssim & \,   \|\nabla_{y} \delta\underline{v}\|_{L^1(0, t; \dot{B}^{{d}/{q}}_{q, 1})},\\
     \|\nabla\delta \mathrm{M}_{\underline{v}}\|_{L^\infty(0, t; \dot{B}^{{d}/{q}}_{q, 1})}\lesssim & \,  \|\nabla^2_{y} \delta\underline{v}\|_{L^1(0, t; \dot{B}^{{d}/{q}}_{q, 1})}\\
     &\quad+\|\nabla_{y} \delta\underline{v}\|_{L^1(0, t; \dot{B}^{{d}/{q}}_{q, 1})} \|(\nabla^2_{y}  \underline{ v_1}, \nabla^2_{y}  \underline{v_2})\|_{L^1(0, t; \dot{B}^{{d}/{q}}_{q, 1})}.
\end{align*}
\end{proposition}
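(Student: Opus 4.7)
The starting point is the identity \eqref{formula-M2}, which expresses $\delta M_{\underline{v}}$ as the product of the ``linear in $\delta\underline{v}$'' factor $D_1(t,y)-D_2(t,y)=\int_0^t\nabla_y\delta\underline{v}(\tau,y)\,d\tau$ and the series $S:=\sum_{k\geq 1}\sum_{0\leq j<k} D_1^j D_2^{k-j-1}$. The first key ingredient of the plan is the fact that $\dot{B}^{d/q}_{q,1}(\T^d)$ is a Banach algebra (see Proposition \ref{P_Besov}), i.e.\ $\|fg\|_{\dot{B}^{d/q}_{q,1}}\lesssim\|f\|_{\dot{B}^{d/q}_{q,1}}\|g\|_{\dot{B}^{d/q}_{q,1}}$. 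Combined with \eqref{La-smallness}, which guarantees $\|D_i(t,\cdot)\|_{\dot{B}^{d/q}_{q,1}}\leq\tfrac12$ uniformly in $t\in[0,T]$, the algebra estimate iterated gives $\|D_1^j D_2^{k-j-1}\|_{\dot{B}^{d/q}_{q,1}}\lesssim C^{k-1}2^{-(k-1)}$, so that $\|S\|_{L^\infty(0,t;\dot{B}^{d/q}_{q,1})}\lesssim\sum_{k\geq 1}k\,(C/2)^{k-1}<\infty$, provided the smallness constant in \eqref{La-smallness} is suitably absorbed.

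For the first estimate, I would simply take the $\dot{B}^{d/q}_{q,1}$ norm in \eqref{formula-M2}, apply the algebra bound to the factor $(D_1-D_2)\cdot S$, and use the convergent bound for $S$ together with
\[
\|D_1(t,\cdot)-D_2(t,\cdot)\|_{\dot{B}^{d/q}_{q,1}}\leq\int_0^t\|\nabla_y\delta\underline{v}(\tau,\cdot)\|_{\dot{B}^{d/q}_{q,1}}\,d\tau.
\]
Taking the supremum over $t$ yields the first inequality.

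For the second estimate, the plan is to differentiate \eqref{formula-M2} and use the Leibniz rule to write
\[
\nabla\delta M_{\underline{v}}= \nabla(D_1-D_2)\cdot S \;+\;(D_1-D_2)\cdot\nabla S.
\]
The first term is bounded in $\dot{B}^{d/q}_{q,1}$ by the algebra property by $\|\nabla(D_1-D_2)\|_{\dot{B}^{d/q}_{q,1}}\|S\|_{\dot{B}^{d/q}_{q,1}}\lesssim\int_0^t\|\nabla_y^2\delta\underline{v}(\tau,\cdot)\|_{\dot{B}^{d/q}_{q,1}}\,d\tau$. For the second term, I apply the Leibniz rule inside the series: $\nabla(D_1^j D_2^{k-j-1})$ produces $k-1$ terms in each of which a single gradient lands on some factor $D_i$, while the remaining factors are controlled in $\dot{B}^{d/q}_{q,1}$ by $1/2$ each. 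Using the algebra property once more and summing the geometric series, this gives
\[
\|\nabla S\|_{L^\infty(0,t;\dot{B}^{d/q}_{q,1})}\lesssim \bigl\|\nabla_y^2 \underline{v_1}\bigr\|_{L^1(0,t;\dot{B}^{d/q}_{q,1})}+\bigl\|\nabla_y^2 \underline{v_2}\bigr\|_{L^1(0,t;\dot{B}^{d/q}_{q,1})},
\]
and multiplying by $\|D_1-D_2\|_{\dot{B}^{d/q}_{q,1}}$ yields the second piece of the desired inequality.

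The routine part is the Banach algebra computation; the only subtle point is the bookkeeping of the combinatorial factor $k$ in $\sum_{k\geq 1}\sum_{0\leq j<k}$ and the extra factor $(k-1)$ that appears after differentiating $D_1^j D_2^{k-j-1}$, which together produce a sum of the form $\sum_{k\geq 1}k^2(C/2)^{k-1}$; this still converges provided the smallness constant in \eqref{La-smallness} is chosen small enough to absorb the algebra constant $C$ (otherwise one simply replaces $\tfrac12$ by a smaller universal constant, without altering any of the statements used elsewhere). No further difficulty is anticipated.
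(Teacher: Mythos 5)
Your proof is correct and follows the route the paper intends: the paper does not supply an explicit proof of Proposition~\ref{Lag-es2}, but it states the identity \eqref{formula-M2} immediately before it precisely so that the two bounds follow from the Banach-algebra property of $\dot{B}^{d/q}_{q,1}(\T^d)$ (a consequence of Lemma~\ref{Le-KP2} and the embedding $\dot{B}^{d/q}_{q,1}\hookrightarrow L^\infty$) together with the Neumann-series smallness from \eqref{La-smallness}, exactly as you do. The one caveat you correctly flag — that the constant $1/2$ in \eqref{La-smallness} must absorb the algebra and embedding constants so that $\sum_k k^2(C/2)^{k-1}$ converges — is the only nontrivial bookkeeping point, and your remark that one may shrink the smallness threshold without affecting anything else in the paper resolves it.
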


\bigbreak
\bigbreak

Let us now  derive the MRE \eqref{MRE} in the  Lagrangian coordinates.  We set
\begin{align*} 
   &\underline{B}(t, y):= B(t, X_{{u}}(t, {y})),\quad \underline{u}(t, y):=u(t,X_{{u}}(t, {y}))\\
   &\underline{P}(t, y):= P(t, X_{{u}}(t, {y})).
\end{align*}
Using the fact that solutions of the magnetic equation \eqref{MRE}$_1$ is given by 
\begin{align*}
    \underline{B}(t, y)=B_0(y)\cdot \nabla_y X_{u}(t, y),
\end{align*}  
and relations around \eqref{Lag-formula}, we find that MRE \eqref{MRE}-\eqref{eq-indata} recasts to 
\begin{equation}\label{MRE-Lag}
    \left\{\begin{aligned}
       \underline{B}(t, y)&= B_0(y) + \int_0^t B_0(y)\cdot \nabla_y \underline{u}(\tau, y)\,d\tau,\\
        (-\Delta_{y})^{\gamma} \underline{u}-\mathrm{M}^{\rm T}_{\underline u}\nabla_{y}\underline{P}&=\mathrm{div}_{y}\left((\underline{B}\otimes \underline{B})\mathrm{M}_{\underline{u}}^{\rm T}\right)+ ( (-\Delta_{y})^{\gamma}-(-\Delta_{\underline u})^\gamma) \underline{u},\\
        \mathrm{div}_y\underline{u}&= \left({\rm Id}-\mathrm{M}^{\rm T}_{\underline u}\right):\nabla_y \underline{u}.
    \end{aligned} \right.
\end{equation}

 We are ready  to prove the  uniqueness parts of Theorem \ref{Th2-1}  and Theorem \ref{Th2}.  Consider two solutions $(B_1, u_1,   P_1)$ and $(B_2, u_2,   P_2)$  obtained from Theorem \ref{Th2-1} (respectively, Theorem \ref{Th2}) for  MRE \eqref{MRE}, emanating from the same initial data $B_0$, and denote by  $(\underline{B_1}, \underline{u_1},  \underline{P_1})$ and  $(\underline{B_2}, \underline{u_2},  \underline{P_2})$ the corresponding triplets in Lagrangian coordinates.  That is 
  \begin{align*} 
   &\underline{B_i}(t, y):= B_i(t, X_{{u_i}}(t, {y}),\quad \underline{u_i}(t, y):=u_i(t,X_{{u_i}}(t, {y}))\\
   &\underline{P_i}(t, y):= P_i(t, X_{{u_i}}(t, {y}),
\end{align*}
  where $X_{u_i}$ is the solution of \eqref{def-trajectory} with velocity $u_i$, for $i=1, 2.$
Recall from Remark \ref{re-solu} that  we have 
  \begin{align*}
       u_{i}\in L^\infty(0, \infty; \dot{B}^{2\gamma-1}_{p, \infty})\quad{\rm for}~~ i=1, 2,
  \end{align*}
when  $\gamma>\gamma_c$ and $p=1$ (respectively,  when $\gamma=\gamma_c$ and $p=(2+\eps)/2$).  Since in our framework the Lagrangian and Eulerian formulations are equivalent,
hence taking $T_0$ small enough, we may assume, with no loss of generality, that
\begin{equation}\label{Lag-smallsolu}
   C(T_0):= \int_0^{T_0} \|  \underline{ u_1}(t, \cdot)\|_{ \dot{B}^{2\gamma-1}_{p, \infty} }\,dt + \int_0^{T_0} \|  \underline{ u_2}(t, \cdot)\|_{ \dot{B}^{2\gamma-1}_{p, \infty} }\,dt\leq \frac{1}{2}.
\end{equation}
Particularly,  this ensures inequality \eqref{La-smallness} is satisfied and thus Proposition \ref{Lag-es1}  and \ref{Lag-es2} apply. Indeed,   since $2\gamma-1>d/p+1$ is satisfied for both cases,  the interpolation inequality  from Proposition \ref{P_Besov} implies that for any function $w\in\dot{B}^{2\gamma-1}_{p, \infty}$ which has zero mean on $\T^d,$ it satisfies 
\begin{align}\label{U-es000}
      \|w\|_{\dot{B}^{d/{p}+1}_{p, 1}}\lesssim    \|w\|_{\dot{B}^{2\gamma-1}_{p, \infty}}.
\end{align}

Before proceeding,  we have to emphasize that the proof will 
be divided into two parts $\gamma\in\N$ and $\gamma\notin\N$. This seems unavoidable, since in the case $\gamma\notin\N$ the operator $(-\Delta)^{\gamma}$ is {\em{nonlocal}}\, so does  the associated operator $(-\Delta_{\underline u})^{\gamma}.$  This fact gives us    difficulties when performing difference estimates. For expository purposes,  we focus our attention on Theorem \ref{Th2-1},  i.e. $\gamma>\gamma_c,~p=1.$ In the end, we will explain   how to adapt the argument to  prove uniqueness  in  the case  $\gamma=\gamma_c$ and $p=(2+\eps)/2.$

\subsection{The case \texorpdfstring{$\gamma\in\N$}{TEXT}}
Define $(\delta \underline{B},   \delta \underline{u}, \delta \underline{P}):= (\underline{B_1}-\underline{B_2},  \underline{u_1}-\underline{u_2},  \underline{P_1}-\underline{P_2})$ and $\delta{\rm M}:= {\rm M}_{\underline{u_1}}-{\rm M}_{\underline{u_2}}.$ In virtue of formulas \eqref{Lag-formula}-\eqref{Lag-formula1}, the system for $(\delta\underline{B}, \delta\underline{u}, \delta\underline{P})$ reads
\begin{equation}\label{MRE-Lag-d}
    \left\{\begin{aligned}
       \delta \underline{B}(t, y)&=   \int_0^t B_0(y)\cdot \nabla_y \delta\underline{u}(\tau, y)\,d\tau,\\
        (-\Delta_{y})^{\gamma} \delta\underline{u}- \nabla_{y}\delta\underline{P}&=R_1+R_2+R_3,\\
        \mathrm{div}_y\delta\underline{u}&=  R_4,
    \end{aligned} \right.
\end{equation}
where
\begin{align*}
    R_1:=&\, {\rm div}_y(\delta{\rm M}^{\rm T}   \underline{P_1})+ {\rm div}_y(({\rm M}_{\underline{u_2}}^{\rm T}  -{\rm Id}) \delta\underline{P}),\\
    R_2:=&\, {\rm div}_y ((\underline{B_1}\otimes \underline{B_1})\delta {\rm M}^{\rm T})+ \div_y ( (\delta \underline{B}\otimes \underline{B_1}){\rm M}_{\underline{u_2}} ^{\rm T}+(\underline{B_2}\otimes \delta \underline{B}) {\rm M}_{\underline{u_2}}^{\rm T}),\\
    R_3:=& \,((-\Delta_{\underline{u_2}})^\gamma-(-\Delta_{\underline{u_1}})^\gamma)\underline{u_2}+((-\Delta_y)^\gamma-(-\Delta_{\underline{u_1}})^\gamma)\delta \underline{u},\\
    R_4:=&\,  ({\rm Id}- {\rm M}_{\underline{u_2}}^{\rm T}) :\nabla_y  \delta\underline{u}- \delta{\rm M}^{\rm T} :\nabla \underline{u_1}.
\end{align*}

Thanks to H\"{o}lder's inequality,  the embedding $\dot{B}^{d/p}_{p, 1}\hookrightarrow L^\infty$  and \eqref{U-es000},  one has 
\begin{align}
   \|\delta \underline{B}(t, \cdot)\|_{L^2}&\leq \|B_0\|_{L^2}\|\nabla_y \delta\underline{u}\|_{L^1(0, t; L^{\infty})}\notag\\
   &\leq \|B_0\|_{L^2} \| \delta\underline{u}\|_{L^1(0, t; {\dot{B}^{d+1}_{1, 1}})}\notag\\
    &\lesssim \|B_0\|_{L^2}\| \delta\underline{u}\|_{L^1(0, t; {\dot{B}^{2\gamma-1}_{1, \infty}})}\label{U-es1}
    \end{align}
    on $[0, T_0].$
Meanwhile, applying  Lemma \ref{Le-L} to the equation of $\delta \underline{u}$ gives
   \begin{align}\label{U-es2}
        \|\delta\underline{u}(t, \cdot)\|_{\dot{B}^{2\gamma-1}_{1, \infty}} +\| \delta\underline{P}\|_{\dot{B}^{0}_{1, \infty}}
        \lesssim \|(R_1, R_2, R_3)(t, \cdot)\|_{\dot{B}^{-1}_{1, \infty}} +\|R_4(t, \cdot)\|_{\dot{B}^{2\gamma-2}_{1, \infty}}.
   \end{align}
   
In what follows, we   estimate the right-hand side  of \eqref{U-es2} term by term.  For that purpose,  we recall the following product law  from Lemma \ref{Le-PL}:
\begin{align}\label{productlaw1}
    \dot{B}^{d }_{1, 1}\times \dot{B}^{0}_{1, \infty}\hookrightarrow \dot{B}^{0}_{1, 1}.
\end{align}
Then, by \eqref{productlaw1} and Proposition \ref{Lag-es1} and \ref{Lag-es2}, for the term $R_1$   we have
\begin{align}\label{U-esR1}
    \|R_1(t, \cdot)\|_{\dot{B}^{-1}_{1, \infty}}
    \lesssim&~ \|\delta{\rm M}\|_{\dot{B}^{d}_{1, 1}}\|  \underline{P_1}\|_{\dot{B}^{0}_{1, \infty}}+ \|{\rm M}_{\underline{u_2}} -{\rm Id}\|_{\dot{B}^{d}_{1, 1}} \|\delta\underline{P}\|_{\dot{B}^{0}_{1, \infty}}\notag\\
 \lesssim&~ \|  \underline{P_1}\|_{\dot{B}^{0}_{1, \infty}}\|  \delta\underline{u}\|_{L^1(0, t;{\dot{B}^{d+1}_{1, 1}})}+ \| {\underline{u_2}}\|_{L^1(0, t;   {\dot{B}^{d+1}_{1, 1}})} \|\delta\underline{P}\|_{\dot{B}^{0}_{1, \infty}}\notag\\
  \lesssim&~ \|  \underline{P_1}\|_{\dot{B}^{0}_{1, \infty}}\|  \delta\underline{u}\|_{L^1(0, t;{\dot{B}^{2\gamma-1}_{1, \infty}})}+ C(T_0)\|\delta\underline{P}\|_{\dot{B}^{0}_{1, \infty}},
\end{align}
where in the last line inequality   \eqref{U-es000} was used again.
Let us now bound $R_2.$  It is clear  that
\begin{align*}
    \|{\rm div}_y ((\underline{B_1}\otimes \underline{B_1})\delta {\rm M}^{\rm T})\|_{\dot{B}^{-1}_{1, \infty}}&\lesssim \|\delta {\rm M}\|_{\dot{B}^{d}_{1, 1}} \|B_1\|_{\dot{B}^{0}_{1, \infty}}^2\\
    &\lesssim \|B_1\|_{L^2}^2\|  \delta\underline{u}\|_{L^1(0, t;{\dot{B}^{2\gamma-1}_{1, \infty}})}.
    \end{align*}
Then, we write  the decomposition
    \begin{align*}
        {\rm div}_y ((\delta \underline{B}\otimes \underline{B_1}) {\rm M}_{\underline{u_2}}^{\rm T})=  {\rm div}_y ( (\delta \underline{B}\otimes \underline{B_1})( {\rm M}_{\underline{u_2}}^{\rm T}-{\rm Id}) +  \delta \underline{B}\otimes \underline{B_1})
    \end{align*}
    and find that
\begin{align*}
\|\div_y ((\delta \underline{B}\otimes \underline{B_1})({\rm M}_{\underline{u_2}}^{\rm T}-{\rm Id}))\|_{\dot{B}^{-1}_{1, \infty}}&\lesssim
\|{\rm M}_{\underline{u_2}}-{\rm Id}\|_{\dot{B}^{d}_{1, 1}}\|\delta \underline{B} \otimes \underline{B_1}\|_{\dot{B}^{0}_{1, \infty}}\\
&\lesssim
C(T_0)\| \underline{B_1}\|_{L^2}\|\delta \underline{B}\|_{L^2}\\
 \|\div_y (  \delta \underline{B}\otimes \underline{B_1})\|_{\dot{B}^{-1}_{1, \infty}}&  \lesssim  \|\delta \underline{B}\otimes \underline{B_1}\|_{L^1}\\
 &\lesssim \|\underline{B_1}\|_{L^2} \|\delta \underline{B}\|_{L^2}.
\end{align*}
This yields that
\begin{align*}
    \|\div_y ( (\delta \underline{B}\otimes \underline{B_1}){\rm M}_{\underline{u_2}}^{\rm T} )\|_{\dot{B}^{-1}_{1, \infty}}&\lesssim  (1+ C(T_0))\| \underline{B_1}\|_{L^2}\|\delta \underline{B}\|_{L^2},
\end{align*}
and similarly for the last term in $R_2$, that is
  \begin{align*}
    \|\div_y ( ( \underline{B_2}\otimes \delta \underline{B}){\rm M}_{\underline{u_2}}^{\rm T} )\|_{\dot{B}^{-1}_{1, \infty}}&\lesssim  (1+ C(T_0))\| \underline{B_2}\|_{L^2}\|\delta \underline{B}\|_{L^2},
\end{align*}
from which one has
\begin{align}\label{U-esR2}
    \|R_2\|_{\dot{B}^{-1}_{1, \infty}} 
    \lesssim \|B_1\|_{L^2}^2\|  \delta\underline{u}\|_{L^1(0, t;{\dot{B}^{2\gamma-1}_{1, \infty}})}+  (1+ C(T_0))\| (\underline{B_1}, \underline{B_2})\|_{L^2}\|\delta \underline{B}\|_{L^2}.
\end{align}

Next, we estimate $R_4.$ Obviously, we have
\begin{align*}
    \|w\|_{L^\infty} \lesssim \|w\|_{\dot{B}^{d}_{1, 1}} \lesssim  \|w\|_{\dot{B}^{2\gamma-2}_{1, \infty}}.
\end{align*}
Then Lemma \ref{Le-KP2} implies that $\dot{B}^{2\gamma-2}_{1, \infty}$ is an algebra for mean-free functions on $\T^d.$ Thus by formulas \eqref{formula-M}, \eqref{formula-M2} and  inequality \eqref{Lag-smallsolu},  we have
\begin{align}
     \|R_4\|_{\dot{B}^{2\gamma-2}_{ 1, \infty}} 
     \lesssim&~ \| {\rm Id}- {\rm M}_{\underline{u_2}}^{\rm T} :\nabla_y \delta\underline{u}\|_{\dot{B}^{2\gamma-2}_{ 1, \infty}}+ \|\delta{\rm M}^{\rm T} :\nabla_y \underline{u_1}\|_{\dot{B}^{2\gamma-2}_{ 1, \infty}}\notag\\
     \lesssim&~ \|  \underline{u_2}\|_{L^1(0, t;  \dot{B}^{2\gamma-1}_{1, \infty})}\|  \delta\underline{u}\|_{\dot{B}^{2\gamma-1}_{1, \infty}}
    + \|  \delta\underline{u}\|_{L^1(0, t; \dot{B}^{2\gamma-1}_{1, \infty})}\|  \underline{u_1}\|_{\dot{B}^{2\gamma-1}_{1, \infty}}\notag\\
      \lesssim&~   T_0 \|(\underline{u_1}, \underline{u_2}) \|_{L^\infty(0, T_0;   \dot{B}^{2\gamma-1}_{1, \infty})} \|\delta\underline{u}\|_{ L^\infty(0, T_0; \dot{B}^{2\gamma-1}_{1, \infty})},\label{U-esR4}
\end{align}
where in the last line we used  H\"older's inequality in time.

Finally, the term $R_3$  maybe handled along the same lines as previous and by induction on $\gamma\in\N$ and $\gamma>\gamma_c.$  More precisely, define
$$\mathcal{R}^\gamma_{1} (w):=   ((-\Delta_{\underline{u_2}})^\gamma-(-\Delta_{\underline{u_1}})^\gamma) w,\quad \mathcal{R}^\gamma_{2} (w):=   ((-\Delta_{y})^\gamma-(-\Delta_{\underline{u_1}})^\gamma) w,$$
we claim that we have
\begin{align}
     &\|\mathcal{R}^{\gamma}_1(w)\|_{\dot{B}^{-1}_{ 1, \infty}}\notag \\
      \lesssim& \, T_0 (1+ \|(\underline{u_1}, \underline{u_2}) \|_{L^\infty(0, T_0;   \dot{B}^{2\gamma-1}_{1, \infty})}) \|\delta\underline{u}\|_{ L^\infty(0, T_0; \dot{B}^{2\gamma-1}_{1, \infty})}\|w\|_{ L^\infty(0, T_0; \dot{B}^{2\gamma-1}_{1, \infty})} \label{U-esR31}
\end{align}
and
      \begin{align}
 \|\mathcal{R}^{\gamma}_2(w)\|_{\dot{B}^{-1}_{ 1, \infty}} 
      \lesssim   T_0 \|\underline{u_1} \|_{L^\infty(0, T_0;   \dot{B}^{2\gamma-1}_{1, \infty})} \|w\|_{ L^\infty(0, T_0; \dot{B}^{2\gamma-1}_{1, \infty})}.\label{U-esR32}
\end{align}
Notice that we have
\begin{align*}
   \mathcal{R}_1^\gamma(w)
    &= \left(-{\rm div}_y (    (\nabla_y w)   {\rm M}_{\underline{u_2}}{\rm M}_{\underline{u_2}}^{\rm T})\right)^\gamma -     \left(-{\rm div}_y (    (\nabla_y w)   {\rm M}_{\underline{u_1}}{\rm M}_{\underline{u_1}}^{\rm T})\right)^\gamma     ,\\
   \mathcal{R}_2^\gamma(w)&= (-{\rm div}_y  \nabla_y w)^\gamma -(-{\rm div}_y (    (\nabla_y w)   {\rm M}_{\underline{u_1}}{\rm M}_{\underline{u_1}}^{\rm T}))^\gamma,
\end{align*}
and 
\begin{align}\label{formula-dM}
    {\rm M}_{\underline{u_2}}{\rm M}_{\underline{u_2}}^{\rm T}-{\rm M}_{\underline{u_1}}{\rm M}_{\underline{u_1}}^{\rm T}=\delta{\rm M}({\rm Id}-{\rm M}_{\underline{u_2}}^{\rm T}) +({\rm Id}-{\rm M}_{\underline{u_1}} )\delta{\rm M}^{\rm T}-  \delta{\rm M}- \delta{\rm M}^{\rm T},
\end{align}
so it is easy to check  that \eqref{U-esR31} and \eqref{U-esR32} are satisfied when $\gamma=3.$ Now, assuming that they are satisfied for $\gamma-1$,  we write
\begin{multline*}
     ((-\Delta_{\underline{u_2}})^\gamma-(-\Delta_{\underline{u_1}})^\gamma)w
    \\=- ((-\Delta_{\underline{u_2}})^{\gamma-1}-(-\Delta_{\underline{u_1}})^{\gamma-1})\Delta_{\underline{u_2}}w 
    -(-\Delta_{\underline{u_1}})^{\gamma-1}(\Delta_{\underline{u_2}}-\Delta_{\underline{u_1}})w,
\end{multline*}
to find that
\begin{align*}
    &\| ((-\Delta_{\underline{u_2}})^\gamma-(-\Delta_{\underline{u_1}})^\gamma)w\|_{\dot{B}^{-1}_{1, \infty}}\\
    \leq&\, \|\mathcal{R}_1^{\gamma-1} (\Delta_{\underline{u_2}} w)\|_{\dot{B}^{-1}_{1, \infty}}+ \|(-\Delta_{\underline{u_1}})^{\gamma-1}(\Delta_{\underline{u_2}}-\Delta_{\underline{u_1}})w\|_{\dot{B}^{-1}_{1, \infty}}\\
    \lesssim& \, T_0 (1+\|(\underline{u_1}, \underline{u_2}) \|_{L^\infty(0, T_0;   \dot{B}^{2\gamma-3}_{1, \infty})}) \|\delta{\underline{u}}   \|_{ L^\infty(0, T_0; \dot{B}^{2\gamma-3}_{1, \infty})} \|\Delta_{\underline{u_2}} w   \|_{ L^\infty(0, T_0; \dot{B}^{2\gamma-3}_{1, \infty})} \\
    &\quad + \|(\Delta_{\underline{u_2}}-\Delta_{\underline{u_1}})w\|_{\dot{B}^{2\gamma-3}_{1, \infty}}\\
     \lesssim &\, T_0 (1+ \|(\underline{u_1}, \underline{u_2}) \|_{L^\infty(0, T_0;   \dot{B}^{2\gamma-1}_{1, \infty})}) \|\delta\underline{u}\|_{ L^\infty(0, T_0; \dot{B}^{2\gamma-1}_{1, \infty})}\|w\|_{ L^\infty(0, T_0; \dot{B}^{2\gamma-1}_{1, \infty})},  
\end{align*}
where in the last inequality we used the facts that the transform $X_{u_1}, X_{u_2}$ are volume preserving, and $\dot{B}^{2\gamma-2}_{1, \infty}$ is an algebra together with formula \eqref{formula-dM}. As such, we conclude  by induction that \eqref{U-esR31} is satisfied for all  $\gamma\in\N$ and $\gamma>\gamma_c.$  The proof of \eqref{U-esR32}  can be handled along the same lines.

With \eqref{U-esR31} and \eqref{U-esR32} in hand, we see that 
\begin{align}
    \|R_3\|_{\dot{B}^{-1}_{1, \infty}}\leq&~ \|\mathcal{R}_1^{\gamma}(\underline{u_2})\|_{\dot{B}^{-1}_{1, \infty}}+  \|\mathcal{R}_2^{\gamma}(\delta\underline{u})\|_{\dot{B}^{-1}_{1, \infty}}\notag\\
    \lesssim& ~ T_0 (1+ \|(\underline{u_1}, \underline{u_2}) \|_{L^\infty(0, T_0;   \dot{B}^{2\gamma-1}_{1, \infty})})^2 \|\delta\underline{u}\|_{ L^\infty(0, T_0; \dot{B}^{2\gamma-1}_{1, \infty})}.\label{U-esR3}
\end{align}
Note that owing to our assumptions on $(\underline{B_1}, \underline{u_1}, \underline{P1})$ and $(\underline{B_2}, \underline{u_2}, \underline{P_2}),$ all the factors involving the functions  $(\delta\underline{B}, \delta\underline{u}, \delta\underline{P})$ in the estimate  of $R_1, \cdots, R_4$ tend to $0$ when $T_0$ goes to $0$. Putting together inequalities \eqref{U-esR1}, \eqref{U-esR2}, \eqref{U-esR4} and \eqref{U-esR3} into inequality \eqref{U-es2}, and using \eqref{U-es1},   we get $(\delta\underline{B}, \delta\underline{u})\equiv 0$  on $[0, T_0]\times\T^d$ if $T_0$ is small enough. 
Finally,   uniqueness is proved on the whole  interval thanks to a standard connectivity  argument.

\subsection{The case \texorpdfstring{$\gamma\notin\N$}{TEXT}}
Let us  recall that, in $\R^d,$ for all $\sigma\in(0, 1)$ the  fractional Laplacian $(-\Delta)^{\sigma}=\Lambda^{2\sigma}$ enjoys a singular integral definition:
\begin{align}\label{Def-fracL}
   \Lambda^{2\sigma} f  = {\rm p. v.}\int_{\R^d} (f(x)-f(\underline{z}))\, K^{\sigma}(x-\underline{z})\,d\underline{z} \quad{\rm for}~~ x\in\R^d
\end{align}
with the kernel $K^\sigma(\underline{z})=\frac{c_{d, \sigma}}{|\underline{z}|^{d+2\sigma}},$ where $c_{d, \sigma}$ is a constant depending on dimension $d$  
and $\sigma.$  For periodic functions,
the representation is still valid due to  sufficient decay of $K^\sigma$ at infinity.   Indeed,  if $f$ is 1-periodic then the representation becomes 
\begin{align}\label{Def-fracL-P}
   \Lambda^{2\sigma} f  = {\rm p. v.}\int_{\T^d} (f(x)-f(\underline{z}))\, K^{\sigma}_{\rm per}(x-\underline{z})\,d\underline{z}\quad{\rm for}~~ x\in\T^d
\end{align}
with $K^\sigma_{\rm per}(\underline{z})=\sum_{{\rm j}\in\mathbb{Z}^d}\frac{c_{d, \sigma}}{|\underline{z}+{\rm j}|^{d+2\sigma}},$  see e.g.   \cite[(1.36)]{CS23}.
Thus in Lagrangian coordinates $ \Lambda^{2\sigma} u_i$ ($i=1,  2)$ recasts to
\begin{align*}
    \Lambda_{\underline{u_i}}^{2\sigma}{\underline{u_i}}&= {\rm p. v.}\int_{\T^d}   ({u_i}(X_{\underline{u_i}}(t, y))-{u_i}(\underline{z}))\, K^\sigma_{\rm per}(X_{\underline{u_i}}(t, y)-\underline{z})\,d\underline{z}\\
    &= {\rm p. v.}\int_{\T^d}  (\underline{u_i}(y)- \underline{u_i}(z))\, K^\sigma_{\rm per}(X_{\underline{u_i}}(t, y)- X_{\underline{u_i}}(t, z))\,dz
\end{align*}
for $ y\in\T^d,$ where we used the fact that the transform $X_{u_1}, X_{u_2}$ are volume preserving. The above notation has its limitation when considering difference estimates in the Lagrangian coordinates, so we are motivated to naturally extend its definition: Given any divergence free vector-valued function $v\in L^1(0, T; W^{1, \infty})$ and  $X_v$   defined by \eqref{def-trajectory},   $\forall \underline{f}: y\mapsto \underline{f}(y)$ we define
\begin{align}\label{Def-fracL'}
   \Lambda_{\underline{v}}^{2\sigma} \underline{f}: = {\rm p. v.}\int_{\T^d}  (\underline{f}(y)-\underline{f}(z)) \, K^\sigma_{\rm per}(X_{v}(t, y)-X_{v}(t, z))\,d{z}.
\end{align}
 
Now, since for any given $\gamma\notin\N$ and $\gamma>\gamma_c,$   $\gamma=  1+\theta+[\gamma] -1$ with $\theta=\gamma-[\gamma]\in (0, 1),$ and in Eulerian coordinates $(-\Delta)^\gamma= -{\rm div}\Lambda^{2\theta}\nabla (-\Delta)^{[\gamma]-1},$ thus one   finds by  definition \eqref{Def-fracL'} that  
\begin{align}
    (-\Delta_{\underline{u_i}})^\gamma{\underline{u_i}}
    =-{\rm div}_{\underline{u_i}}  \Lambda^{2\theta}_{\underline{u_i}}  \nabla_{\underline{u_i}} (-\Delta_{\underline{u_i}}  )^{[\gamma]-1}  \underline{u_i}.\label{formula-FracL}
\end{align}
Based on above formula, we are able to rewrite \eqref{MRE-Lag-d}  as
\begin{equation}\label{MRE-Lag-d'}
    \left\{\begin{aligned}
       \delta \underline{B}(t, y)&=   \int_0^t B_0(y)\cdot \nabla_y \delta\underline{u}(\tau, y)\,d\tau,\\
        (-\Delta_y)^{\gamma} \delta\underline{u}- \nabla_{y}\delta\underline{P}&=R_1+R_2+R_3',\\
        \mathrm{div}_y\delta\underline{u}&=  R_4,
    \end{aligned} \right.
\end{equation}
with
\begin{align*}
    R_3':=& \,({\rm div}_{\underline{u_1}}- {\rm div}_{\underline{u_2}}) \Lambda^{2\theta}_{\underline{u_2}}\nabla_{\underline{u_2}} (-\Delta_{\underline{u_2}})^{[\gamma]-1}\underline{u_2}\\
    &\quad+  {\rm div}_{\underline{u_1}}(\Lambda^{2\theta}_{\underline{u_1}}-\Lambda^{2\theta}_{\underline{u_2}}){\nabla}_{\underline{u_2}}(-\Delta_{\underline{u_2}})^{[\gamma]-1}\underline{u_2}\\
&\quad+  {\rm div}_{\underline{u_1}}\Lambda^{2\theta}_{\underline{u_1}}({\nabla}_{\underline{u_1}}-{\nabla}_{\underline{u_2}}) (-\Delta_{\underline{u_2}})^{[\gamma]-1}\underline{u_2}\\
    &\quad+ {\rm div}_{\underline{u_1}}\Lambda^{2\theta}_{\underline{u_1}}{\nabla}_{\underline{u_1}} ((-\Delta_{\underline{u_1}})^{[\gamma]-1}-(-\Delta_{\underline{u_2}})^{[\gamma]-1} ) {\underline{u_2}}\\
    &\quad+ ((-\Delta_{y})^{\gamma}+ {\rm div}_{\underline{u_1}} \Lambda^{2\theta}_{\underline{u_1}}\nabla_{\underline{u_1}} (-\Delta_{\underline{u_1}})^{[\gamma]-1})\delta\underline{u}\\
    =:& R_{31}'+R_{32}'+R_{33}'+R_{34}'+R_{35}'\cdotp
\end{align*}

In order to give an unified proof for $\theta\in(0, \infty),$  
we have to  perform estimates for $\delta\underline{u}$ in slightly larger Besov space compared to the case $\gamma\in\N$.   More precisely, define $p_*=d/(d-\theta)$ i.e. $\theta= d-d/p_*,$ and apply Lemma \ref{Le-L}
  gives that 
   \begin{multline}\label{U2-es2}
        \|\delta\underline{u}(t, \cdot)\|_{\dot{B}^{2\gamma-1-d+d/p_*}_{p_*, \infty}} +\| \delta\underline{P}\|_{\dot{B}^{-d +d/p_*  }_{p_*, \infty}}\\
        \lesssim \|(R_1, R_2, R_3')(t, \cdot)\|_{\dot{B}^{-1-d+d/p_*}_{p_*, \infty}} +\|R_4(t, \cdot)\|_{\dot{B}^{2\gamma-2-d+d/p_*}_{p_*, \infty}}.
   \end{multline}
   Moreover, using $2\gamma-1-d>1$ and thanks to the embedding property in Proposition \ref{P_Besov}, it is clear that 
   \begin{align}
   \|\delta \underline{B}(t, \cdot)\|_{L^2}\lesssim&\, \|B_0\|_{L^2}\| \nabla_y\delta\underline{u}\|_{L^1(0, t; L^\infty)}\notag\\
   \lesssim&\,
   \|B_0\|_{L^2}\| \delta\underline{u}\|_{L^1(0, t; {\dot{B}^{2\gamma-1-d+d/p_*}_{p_*, \infty}})}.\label{U2-es1}
    \end{align}

At this stage, let us remark that the estimates of the terms $R_1, R_2,  R_4$ in the   right-hand side of \eqref{U2-es2} are quite similar to what we did in the previous case. Indeed,  since $d/p_*-\theta>0,$ one has  the product law  (from Lemma \ref{Le-PL})
   \begin{align} \label{productlaw22}
  \dot{B}^{d/p_* }_{p_*,1} \times  \dot{B}^{ -\theta}_{p_*, \infty}   \hookrightarrow \dot{B}^{-\theta}_{p_*, \infty},
\end{align}
 and  the fact that the function space $\dot{B}^{2\gamma-2-d+d/p_*}_{p_*, \infty}$ is an algebra in our framework due to $2\gamma>2+d$ again.
 
 As such,  we will only focus on the estimate of $R_3'$ in the next.  
To do that, we claim that we have the following estimates, for which proof is presented   in Appendix \ref{A}.
\begin{lemma}\label{Le-es-FracL}
Let $0<s_1<1, ~s_1<s_2<1+s_1$ and $p_1, r_1\in[1, \infty].$ Assume that $\underline{v}, \underline{v_1}, \underline{v_2}$ satisfied \eqref{La-smallness}.   Let $w\in {\dot{B}^{s_2-s_1}_{p_1, r_1}}(\T^d),$ then it holds that
\begin{align}\label{Le-es-F1}
     \|\Lambda_{\underline{v}}^{s_2} w\|_{\dot{B}^{-s_1}_{p_1, r_1}} 
      \lesssim \,  \|w\|_{   \dot{B}^{s_2-s_1}_{p_1, r_1}}
\end{align}
and
      \begin{align}\label{Le-es-F2}
   \|(\Lambda_{\underline{v_1}}^{s_2}-\Lambda_{\underline{v_2}}^{s_2} )w\|_{\dot{B}^{-s_1}_{p_1, r_1}} 
      \lesssim \,  \|w\|_{   \dot{B}^{s_2-s_1}_{p_1, r_1}}\|\delta \underline{v}\|_{ L^1(0, T; W^{1, \infty})}.
\end{align}  
\end{lemma}

 Then,  thanks to product law \eqref{productlaw22} and Lemma \ref{Le-es-FracL}, and keeping in mind that $\theta=d-d/p_*,$ for  $C_{0}:=1+\|(\underline{u_1}, \underline{u_2})\|_{L^\infty(0, T_0; \dot{B}^{d+1}_{1, 1})}$ the  terms  in $R_3'$ can be estimated as follows, 
\begin{align*}
      \|R_{31}'\|_{\dot{B}^{-1-d+d/p_*}_{1, \infty}}\lesssim &\,  \|\delta{\rm M} \,\Lambda^{2\theta}_{\underline{u_2}}\nabla_{\underline{u_2}} (-\Delta_{\underline{u_2}})^{[\gamma]-1}\underline{u_2}\|_{\dot{B}^{-\theta}_{1, \infty}}\\
      \lesssim &\,  
      \|\delta\underline{u}\|_{L^1(0, T_0; \dot{B}^{d/p_* +1}_{p_*, 1})} \|\Lambda^{2\theta}_{\underline{u_2}}\nabla_{\underline{u_2}} (-\Delta_{\underline{u_2}})^{[\gamma]-1}\underline{u_2}\|_{L^\infty(0, T_0; \dot{B}^{-\theta}_{p_*, \infty})}\\
        \lesssim &\, C_0\, T_0 \,\|\delta\underline{u}\|_{L^1(0, T_0; \dot{B}^{d/p_*+1}_{p_*, 1})}\| \nabla_{\underline{u_2}}(-\Delta_{\underline{u_2}})^{[\gamma]-1}\underline{u_2}\|_{L^\infty(0, T_0; \dot{B}^{\theta}_{q*, \infty})}\\
\lesssim &\,C_0 \,T_0 \,\|\delta\underline{u}\|_{L^\infty(0, T_0; \dot{B}^{d/p_*+1}_{p_*, 1})}\|\underline{u_2}\|_{L^\infty(0, T_0; \dot{B}^{2[\gamma]-1+\theta}_{p_*, \infty})}
\end{align*}
and
\begin{align*}
      \|R_{32}'\|_{\dot{B}^{-1-d+d/p_*}_{p_*, \infty}}\lesssim &\,  C_0\, T_0\, \|(\Lambda^{2\theta}_{\underline{u_1}}-\Lambda^{2\theta}_{\underline{u_2}})\nabla_{\underline{u_2}}(-\Delta_{\underline{u_2}})^{[\gamma]-1}\underline{u_2}\|_{L^\infty(0, T_0; \dot{B}^{-\theta}_{p_*, \infty})} \\
\lesssim&\, C_0\, T_0\, \|\delta\underline{u}\|_{L^1(0, T_0; {W}^{1, \infty})} \|\nabla_{\underline{u_2}}(-\Delta_{\underline{u_2}})^{[\gamma]-1}\underline{u_2}\|_{L^\infty(0, T_0; \dot{B}^{\theta}_{p_*, \infty})}\\
\lesssim&\, C_0\, T_0 \,\|\delta\underline{u}\|_{L^\infty(0, T_0; {W}^{1, \infty})} \|\underline{u_2}\|_{L^\infty(0, T_0; \dot{B}^{2[\gamma]-1+\theta}_{p_*, \infty})}.
\end{align*}
Furthermore, noticing that  we have
\begin{align*}
    \dot{B}^{d/p_* }_{p_*,1} \times  \dot{B}^{ \theta}_{p_*, \infty}   \hookrightarrow \dot{B}^{\theta}_{p_*, \infty}\quad ({\rm since }~\theta-d/p_*<0),
\end{align*}
thus
\begin{align*}
\|R_{33}'\|_{\dot{B}^{-1-d+d/p_*}_{p_*, \infty}}\lesssim &\,  C_0\, T_0\, \|\Lambda^{2\theta}_{\underline{u_1}}({\nabla}_{\underline{u_1}}-{\nabla}_{\underline{u_2}}) (-\Delta_{\underline{u_2}})^{[\gamma]-1}\underline{u_2}\|_{\dot{B}^{-\theta}_{p_*, \infty}}\\
\lesssim &\,  C_0\, T_0\,  \|({\nabla}_{\underline{u_1}}-{\nabla}_{\underline{u_2}}) (-\Delta_{\underline{u_2}})^{[\gamma]-1}\underline{u_2}\|_{\dot{B}^{\theta}_{p_*, \infty}}\\
\lesssim &\,  C_0\, T_0\,  \|\delta\underline{u}\|_{L^1(0, T_0; \dot{B}^{d/p_*+1}_{p_*, 1})} \| \nabla_y (-\Delta_{\underline{u_2}})^{[\gamma]-1}\underline{u_2}\|_{\dot{B}^{\theta}_{p_*, \infty}}\\
\lesssim &\,  C_0\, T_0\,  \|\delta\underline{u}\|_{L^\infty(0, T_0; \dot{B}^{d/p_*+1}_{p_*, 1})} \| \underline{u_2}\|_{\dot{B}^{2[\gamma]-1+\theta}_{p_*, \infty}}.
\end{align*}
And,  similar to \eqref{U-esR31} (using $2[\gamma]-1+\theta =2\gamma-1-d+d/p_*> d/p_*+1$)
\begin{align*}
 \|R_{34}'\|_{\dot{B}^{-1-d+d/p_*}_{p_*, \infty}}\lesssim &\,  C_0\, T_0\, 
 \|\Lambda^{2\theta}_{\underline{u_1}}{\nabla}_{\underline{u_1}} ((-\Delta_{\underline{u_1}})^{[\gamma]-1}-(-\Delta_{\underline{u_2}})^{[\gamma]-1} ) {\underline{u_2}}\|_{\dot{B}^{-\theta}_{p_*, \infty}}\\
\lesssim &\,  C_0\, T_0\,  \| ((-\Delta_{\underline{u_1}})^{[\gamma]-1}-(-\Delta_{\underline{u_2}})^{[\gamma]-1} ) {\underline{u_2}}\|_{\dot{B}^{1+\theta}_{p_*, \infty}}\\
\lesssim&\, C_0^2\, T_0\,  \|\delta\underline{u}\|_{L^\infty(0, T_0; \dot{B}^{2[\gamma]-1+\theta}_{p_*, \infty})} \| \underline{u_2}\|_{\dot{B}^{2[\gamma]-1+\theta}_{p_*, \infty}}.
\end{align*}
The last term $R'_{35}$ may be handled along the same lines. Indeed, we have
\begin{align*}
R_{35}'=  -{\rm div}_y\Lambda_y^{2\theta}\nabla (-\Delta_y)^{[\gamma]-1} + {\rm div}_{\underline{u_1}} \Lambda^{2\theta}_{\underline{u_1}}\nabla_{\underline{u_1}} (-\Delta_{\underline{u_1}})^{[\gamma]-1})\delta\underline{u}.
\end{align*}

Finally, putting together all the previous inequalities into \eqref{U2-es2} and using  \eqref{U2-es1},  we
  conclude uniqueness  on a sufficiently small time interval, and then on the whole interval $\R_+.$
\bigbreak

Let us now explain how the arguments have to be modified to prove the uniqueness part of Theorem \ref{Th2}. In fact, one only need to consider the case that $\eps$ is small enough (say $\eps<1/3$), and then follow the above proof with small modification.
In particular, in the case $\gamma\in \N,$ one may perform estimates for the equation of $\delta\underline{u}$ in $\dot{B}^{-1}_{p, \infty},$ 
while for the case $\gamma\notin\N$ one may choose   the corresponding space to be $\dot{B}^{-1-d/p+d/p_*}_{p_*, \infty}$ with $p_*=2dp/(2d-p\theta)$ i.e. $\theta/2= d/p-d/{p_*}.$  The details are left to the interested reader.

We complete the proof of uniqueness in Theorem \ref{Th2-1} and Theorem \ref{Th2} . \qed
\bigbreak

\begin{proof}[Proof of Corollary \ref{C0}]
We remark that the argument is the same for  \eqref{behavior-u1} and \eqref{behavior-u2}.
  Applying the time derivative to the velocity equation and using the equality of \eqref{eq-dtbb}, we get
\begin{align}\label{eq-dtu}
   (-\Delta)^{\gamma} \partial_t u=  \mathbb{P}\div ( \nabla u(B\otimes B) +(B\otimes B) (\nabla u)^{\rm T}-  \div(u\odot (B\otimes B))  ).
\end{align}
Observing from Lemma \ref{Le-PL} the following product law:
\begin{align*}
    \dot{B}^{d-1}_{1, 1}\times \dot{B}^{0}_{1, \infty}\hookrightarrow  \dot{B}^{-1}_{1, 1}
\end{align*}
and recalling the product law\eqref{productlaw1},
it yields
\begin{align*}
   \| \mathbb{P}\div ( \nabla u(B\otimes B) +(B\otimes B) (\nabla u)^{\rm T})\|_{\dot{B}^{-2}_{1, 1}}\lesssim&~   \|\nabla u(B\otimes B)\|_{\dot{B}^{-1}_{1, 1}}\\
   \lesssim&~   \|\nabla u\|_{\dot{B}^{d-1}_{1, 1}}\|B\otimes B\|_{\dot{B}^{0}_{1, \infty}}\\
    \lesssim&~   \|  u\|_{\dot{B}^{d}_{1, 1}}\|B\|_{L^2}^2,
\end{align*}
and  
\begin{align*}
    \| \mathbb{P}\div \div(u\odot (B\otimes B))\|_{\dot{B}^{-2}_{1, 1}}\lesssim&~ \|u\odot (B\otimes B))\|_{\dot{B}^{0}_{1, 1}}\\
    \lesssim&~ \|u\|_{\dot{B}^{d}_{1, 1}}
    \|B\otimes B\|_{\dot{B}^{0}_{1, \infty}}\\
 \lesssim&~ \|u\|_{\dot{B}^{d}_{1, 1}}
    \|B\|_{L^2}^2.
\end{align*} 
Taking account of above two inequalities and $\gamma\geq \gamma_c,$ we see from  \eqref{eq-dtu} that
\begin{align*}
    \|\partial_t u\|_{L^\infty(\R_+; \dot{B}^{2\gamma-2}_{1, 1})}\lesssim& \|u\|_{L^\infty(\R_+; \dot{B}^{d}_{1, 1})}
    \|B\|_{L^\infty(\R_+; L^2)}^2\\
    \lesssim& \|u\|_{L^\infty(\R_+; \dot{B}^{2\gamma-1}_{1, \infty})}
    \|B_0\|_{L^2}^2.
\end{align*}
This, together with the embedding $\dot{B}^{2\gamma-2}_{1, 1}\hookrightarrow \dot{H}^{2\gamma-2-d/2}$
and $\gamma\geq \gamma_c$ again  imply that  $\partial_t u\in L^\infty(\R_+; H^{d/2}).$
Now, noticing that $ u\in L^2(\R_+; {H}^{d/2})$ and 
\begin{align*}
    \|u(t_1, \cdot)-u(t_2, \cdot)\|_{H^{d/2}}\leq&  \left|\int_{t_1}^{t_2}\|\partial_t u(t, \cdot)\|_{H^{d/2}}\,dt\right|\\
   \leq&   \|\partial_t u\|_{L^\infty(\R_+; H^{d/2})} \,|t_1-t_2|.
\end{align*}
we get  $\|u(t, \cdot)\|_{H^{d/2}}\to 0$ as $t\to\infty$ by \cite[Lemma 3.1]{DWZZ}, which assesses that a uniformly continuous and integrable function must vanish at infinity. Finally, \eqref{behavior-u1} and \eqref{behavior-u2} are proved by interpolating with  the $\dot{B}^{2\gamma-1}_{1, \infty}$ norm of $u,$ respectively, which is uniformly bounded in time thanks to Remark \ref{re-solu}.
\end{proof}


 \appendix
 \section{Function spaces and useful inequalities}\label{A}
Here, we recall the homogeneous Littlewood-Paley decomposition for periodic functions, the definition of  Besov space on torus $\T^d$ and some useful properties.  For  reference, we refer to  \cite[Appendix A]{CS23}.
In fact the analysis for periodic case is essentially the same with  the whole-space case,  and the   details   for the latter can be found in e.g. \cite{Ba11}.  

Let  $ {\varphi}\in\mathcal{D}(\mathcal{A})$ be a smooth function supported in the annulus   $\mathcal{A}=\{\xi\in\mathbb{R}^d, \,\frac34\leq |\xi|\leq \frac83\}$ and such that 
\begin{gather*}
\sum_{j\in\mathbb{Z}}  {\varphi}(2^{-j}\xi)=1,\quad \forall \xi\in\mathbb{R}^d    \backslash \{0\}.
\end{gather*}
For any $u\in \mathcal{S}'(\mathbb{T}^d)$, we have the Fourier series representation
\begin{equation*}
    u(x)=\sum_{{\rm k}\in\Z^d} \widehat{u}_{\rm k} \,e^{i{\rm k}\cdot x}\quad{\rm with}\quad \widehat{u}_{\rm k} := \dfrac{1}{|\T^d|}\int_{\T^d} u(x)\,e^{-i{\rm k}\cdot x}\,dx.
\end{equation*}
Note that in the sequel of this section our functions will always have mean
zero. 
The periodic dyadic block   $\dot{\Delta}_j$ and   the low frequency cut-off $\dot{S}_j$ are defined by 
\begin{equation*}
 \forall j \in \mathbb{Z},\quad \dot{\Delta}_ju:=\sum_{{\rm k}\in\Z^d}{\varphi}(2^{-j} {\rm k})\,\widehat{u}_{\rm k} \,e^{i{\rm k}\cdot x} \quad\text{and}\quad\dot{S}_j u:=\sum_{\ell \leq j-1 }\dot{\Delta}_{\ell} u.
\end{equation*}
Then we have the  decomposition
\begin{equation*}
u=\sum_{j\in\mathbb{Z}}\dot{\triangle}_ju, \quad \forall\, u \in{\mathcal S}'({\mathbb T}^d).
\end{equation*}
Moreover, the homogeneous Littlewood-Paley decomposition satisfies the property of almost orthogonality:
\begin{equation*}
{\dot\Delta}_j{\dot\Delta}_{j'} u=0, \quad {\rm{if}} \ |j-j'|\geq 2, \quad \dot\Delta_j(S_{j'-1}u \dot\Delta_j' u)=0, \quad {\rm{if}} \ |j-j'|\geq 5,
\end{equation*}
and it is obvious that $\dot{\Delta}_j u\equiv 0$ for negative enough $j,$ i.e. low frequency vanishes.

We now recall the definition of homogeneous Besov spaces.
\begin{definition}
	Let $s$ be a real number and $(p, r)$ be in $[1,\infty]^{2}$, we set
	\begin{equation*}
	\|u\|_{\dot B^{s}_{p, r}(\T^d) }  :=  \left\{
	\begin{split}
	&\|2^{js}\|\dot\Delta_{j}u\|_{L^{p} (\T^d)}\|_{\ell^r(\mathbb{Z})} \quad\, \,~~\quad{\rm{for}} ~ 1 \leq r < \infty,\\
	& \sup_{j \in \mathbb{Z}} 2^{j s}\|\dot\Delta_{j}u\|_{L^{p} (\T^d)}~\,\,\quad\qquad {\rm{for}} ~  r= \infty.
	\end{split}
	\right.
	\end{equation*}
	The  homogeneous Besov space $\dot B^{ s}_{p, r}(\T^d):= \{ u \in \mathcal{S}':	\|u\|_{\dot B^{s}_{p, r}(\T^d)}<\infty \}$.
\end{definition}
It is clear that $\|\cdot\|_{\dot{H}^{s}(\T^d)}=\|\cdot\|_{\dot{B}^{s }_{2, 2}(\T^d)},$ where $\dot{H}^s(\T^d)$ is the standard homogeneous Sobolev spaces.

We will now give a characterization of Besov spaces  in terms of finite differences,  see e.g.  \cite[(1.22)]{CS23} or  \cite[Theorem 2.36]{Ba11}.
\begin{proposition}\label{P-F-B}
Let $s\in(0, 1)$ and $p, r\in[1, \infty].$ Then for any $u\in \dot{B}^s_{p, r}(\T^d),$
\begin{align*}
  \left\|\frac{\|u(\cdot+y)-u(\cdot)\|_{L^p(\T^d)}}{|y|^s} \right\|_{L^r\left(\T^d; \frac{dy}{|y|^d}\right)}\sim\|u\|_{\dot{B}^s_{p, r}(\T^d)}.
\end{align*}

\end{proposition}

Next, we recall some basic facts on the Besov spaces.

\begin{proposition}\label{P_Besov}
	Let $1\leq p \leq \infty$. Then there hold:
	
	\begin{itemize}
		\item for all $s\in\mathbb R$~~and~~$1\leq p, r\leq\infty,$ 
		we have 
			\begin{equation*}
		\|D^{k}u\|_{\dot B^{s}_{p, r}(\T^d)}\sim\|u\|_{\dot B^{s+k}_{p, r}(\T^d)}.
		\end{equation*}
		\item for any $\theta\in(0, 1)$ and $\,  \underline{s}<\bar  s,$ we have
$$\|u\|_{\dot B^{\theta   \underline{ s}+(1-\theta)\bar{s}}_{p, 1}(\T^d)}\lesssim\|u\|_{\dot B^{  \underline {s}}_{p, \infty}(\T^d)}^{\theta}\|u\|_{\dot B^{\bar{s}}_{p, \infty}(\T^d)}^{1-\theta}.$$
        \item Embedding: we have the following continuous embedding
$$\dot B^{s}_{p,   r}(\T^d)\hookrightarrow \dot B^{s-\frac{d}{p}}_{\infty, \infty}(\T^d)\quad{\rm{ whenever }} \ 1\leq p,   r\leq\infty,$$
and
$$\dot{B}^0_{p, 1}(\T^d)\hookrightarrow L^p(\T^d) \hookrightarrow\dot{B}^0_{p, \infty}(\T^d),$$

$$\dot{B}^{\bar{s}}_{p, \infty}(\T^d) \hookrightarrow\dot{B}^{\underline{s}}_{p, 1}(\T^d) \quad{\rm{ whenever }} ~~\underline{s}<\bar{s}.$$

\item  Let $f$ be a bounded function on $\mathbb{Z}^{d}\setminus \{0\}$ which is homogeneous of degree 0.  Define $f(D)$ on $\cS(\T^d)$  by \begin{equation*}
 (f(D)u)(x){:=}\sum_{{\rm k}\in\Z^d} f({\rm k})\, \widehat{u}_{\rm k}\,e^{i{\rm k}\cdot x}.
\end{equation*}
Then,  for all exponents $(s,p,r),$ we have the estimate
$$\|f(D) u\|_{\dot B^s_{p,r}(\T^d)}\lesssim \|u\|_{\dot B^s_{p,r}(\T^d)}.$$
	\end{itemize}
\end{proposition}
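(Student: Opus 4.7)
My plan is to establish the four properties in turn, drawing on the Littlewood-Paley characterization and Bernstein-type inequalities that were already tacitly used when defining the spaces.

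For the derivative equivalence, I would observe that $\widehat{D^k u}_{\rm k}=(i{\rm k})^k\widehat u_{\rm k}$, so that $\dot\Delta_j D^k u$ has Fourier support in the annulus $\{|{\rm k}|\sim 2^j\}$ just like $\dot\Delta_j u$. Introducing an auxiliary $\tilde\varphi\in\mathcal D(\mathcal A')$ with $\mathcal A\subset\mathcal A'$ and $\tilde\varphi\equiv 1$ on $\mathcal A$, one writes $\dot\Delta_j D^k u = m_j(D)\dot\Delta_j u$ with symbol $m_j({\rm k})=(i{\rm k})^k\tilde\varphi(2^{-j}{\rm k})$. A transference/scaling argument reduces this to showing that the convolution kernel of $(i\xi)^k\tilde\varphi(2^{-j}\xi)$ has $L^1(\R^d)$ norm $\lesssim 2^{jk}$, which follows by changing variables $\eta=2^{-j}\xi$; the transference from $\R^d$ to $\T^d$ is standard via Poisson summation. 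This yields $\|\dot\Delta_j D^k u\|_{L^p}\sim 2^{jk}\|\dot\Delta_j u\|_{L^p}$, whence the claim after multiplying by $2^{js}$ and taking the $\ell^r$ norm.

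For the interpolation inequality, I would combine the two bounds $\|\dot\Delta_j u\|_{L^p}\leq 2^{-j\underline s}\|u\|_{\dot B^{\underline s}_{p,\infty}}$ and $\|\dot\Delta_j u\|_{L^p}\leq 2^{-j\bar s}\|u\|_{\dot B^{\bar s}_{p,\infty}}$ and split the $\ell^1$ sum defining $\|u\|_{\dot B^{\theta\underline s+(1-\theta)\bar s}_{p,1}}$ at the index $j_0$ where the two bounds balance; the two resulting geometric series (convergent since $\underline s<\bar s$) reproduce the product $\|u\|_{\dot B^{\underline s}_{p,\infty}}^\theta\|u\|_{\dot B^{\bar s}_{p,\infty}}^{1-\theta}$. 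The embedding $\dot B^{\bar s}_{p,\infty}\hookrightarrow \dot B^{\underline s}_{p,1}$ is the same idea applied to a single function, noting that on $\T^d$ the spectrum of $\dot\Delta_j u$ is empty for $j$ below a threshold (nonzero integer frequencies are bounded away from $0$), so the low-frequency tail is in fact finite and the geometric series $\sum_{j\geq j_*}2^{j(\underline s-\bar s)}$ converges.

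The embedding $\dot B^s_{p,r}\hookrightarrow \dot B^{s-d/p}_{\infty,\infty}$ reduces via Bernstein's inequality $\|\dot\Delta_j u\|_{L^\infty}\lesssim 2^{jd/p}\|\dot\Delta_j u\|_{L^p}$ (proved by writing $\dot\Delta_j u = \check\varphi_j*u$ and using Young with $\|\check\varphi_j\|_{L^{p'}}\sim 2^{jd/p}$, again via scaling and transference); multiplying by $2^{j(s-d/p)}$ and taking the sup in $j$ gives the embedding. The sandwich $\dot B^0_{p,1}\hookrightarrow L^p\hookrightarrow \dot B^0_{p,\infty}$ follows from the triangle inequality $\|u\|_{L^p}\leq\sum_j\|\dot\Delta_j u\|_{L^p}$ together with the uniform $L^p$-boundedness of $\dot\Delta_j$ (the kernel being a dilate of a fixed Schwartz function, hence in $L^1$ with uniformly bounded norm).

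Finally, for the Fourier multiplier bound, the main obstacle. On $\T^d$ we cannot directly rescale, so I would proceed through transference: fix a smooth $\tilde\varphi$ as above and consider, for each $j$, the multiplier with $\R^d$-symbol $f(\xi)\tilde\varphi(2^{-j}\xi)$. Homogeneity of degree $0$ of $f$ gives $f(\xi)\tilde\varphi(2^{-j}\xi)=f(2^{-j}\xi)\tilde\varphi(2^{-j}\xi)$, so after setting $\eta=2^{-j}\xi$ the symbol is $g(\eta):=f(\eta)\tilde\varphi(\eta)$, which is a fixed compactly supported function on $\R^d\setminus\{0\}$ whose inverse Fourier transform is Schwartz and lies in $L^1(\R^d)$. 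Young's inequality yields that this multiplier is bounded on $L^p(\R^d)$ uniformly in $j$, and a transference principle (e.g.\ De Leeuw's theorem, or direct verification via periodization of the kernel) implies the same bound on $L^p(\T^d)$. Since $\dot\Delta_j f(D)u = m_j(D)\dot\Delta_j u$ with $m_j(\xi)=f(\xi)\tilde\varphi(2^{-j}\xi)$ on the spectrum of $\dot\Delta_j u$, we conclude $\|\dot\Delta_j f(D)u\|_{L^p}\lesssim\|\dot\Delta_j u\|_{L^p}$ with an absolute constant, and the Besov bound follows by multiplying by $2^{js}$ and taking the $\ell^r$ norm.
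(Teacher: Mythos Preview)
The paper does not actually prove this proposition: it is stated in Appendix~\ref{A} as a recall of standard facts, with the reader directed to \cite{Ba11} and \cite{CS23} for details. Your sketches follow the standard textbook arguments (Bernstein inequalities via scaling, geometric splitting for interpolation, transference for multipliers), and for the first three items they are correct and complete.

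There is, however, a genuine gap in your treatment of the fourth item. You write that $g(\eta)=f(\eta)\tilde\varphi(\eta)$ has inverse Fourier transform which ``is Schwartz and lies in $L^1(\R^d)$''. This requires $f$ to be smooth on the support of $\tilde\varphi$, but the proposition as stated only assumes $f$ is \emph{bounded} on $\Z^d\setminus\{0\}$ and homogeneous of degree~$0$. Boundedness alone is far from sufficient: a bounded homogeneous-of-degree-$0$ symbol that is discontinuous on the unit sphere will not, in general, give rise to an $L^1$ convolution kernel (indeed, such a symbol need not even be an $L^p$ multiplier for $p\neq 2$). Your argument is correct under the additional hypothesis that $f$ extends to a smooth function on $\R^d\setminus\{0\}$, which is certainly the intended meaning---the only application in the paper is to $\Lambda^{-1}\mathbb{P}\div$, whose symbol is smooth away from the origin---but you should flag this explicitly rather than assert that $\check g$ is Schwartz from boundedness alone.
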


The following logarithmic interpolation inequality is available.
\begin{lemma}\cite[Remark 2.105]{Ba11}\label{lemma-log}
There exists a constant $C$   such that for all $s\in\mathbb{R},$ $\theta>0$ and $1\leq p\leq \infty,$ we have
\begin{equation}\label{in-log}
\|f\|_{\dot{B}^{s}_{p, 1}(\T^d)}\leq C \|f\|_{\dot{B}^s_{p, \infty}(\T^d)} \left( 1+\log \left(\dfrac{\|f\|_{\dot{B}^{s+\theta}_{p, \infty}(\T^d)}}{ \|f\|_{\dot{B}^s_{p, \infty}(\T^d)} }\right) \right).
\end{equation}
\end{lemma}
\medskip
 The following Kato-Ponce type inequalities can be found in \cite[Corollary 5.2]{Li19}.
 \begin{lemma}\label{Le-KP}
Let~$s>0$ and  $1<p<\infty.$ Then for any $f, g\in \mathcal{S}(\T^d),$
\begin{equation*}
\|[\Lambda^s, f]g\|_{L^p (\T^d)}\leq C(\|\Lambda^s f\|_{L^{p_1}(\T^d) }\|g\|_{L^{p_2}(\T^d) }+\|\nabla f \|_{L^{p_3}(\T^d) }\|\Lambda^{s-1}g\|_{L^{p_4}(\T^d) }),
\end{equation*}
where $1<  p_1,\, p_2,\, p_3,\, p_4 \leq \infty,$  and $\frac{1}{p}=\frac{1}{p_1}+\frac{1}{p_2}=\frac{1}{p_3}+\frac{1}{p_4}.$
     \end{lemma}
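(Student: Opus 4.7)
The plan is to prove this Kato--Ponce type commutator estimate via Bony's paraproduct decomposition on $\T^d$, which is the standard strategy for such bounds. Start from the decomposition
\[
fg = T_f g + T_g f + R(f,g),
\]
where $T_f g := \sum_{j\in\Z}\dot{S}_{j-1}f\,\dot\Delta_j g$ is the low--high paraproduct and $R(f,g) := \sum_{|j-j'|\leq 1}\dot\Delta_j f\,\dot\Delta_{j'}g$ is the remainder. Applying the same decomposition to $f\Lambda^s g$ and subtracting gives
\[
[\Lambda^s, f]g = \bigl(\Lambda^s T_f g - T_f \Lambda^s g\bigr) + \bigl(\Lambda^s T_g f - T_{\Lambda^s g}f\bigr) + \bigl(\Lambda^s R(f,g) - R(f,\Lambda^s g)\bigr).
\]
The first bracket will produce the $\|\nabla f\|_{L^{p_3}}\|\Lambda^{s-1}g\|_{L^{p_4}}$ piece and the other two brackets the $\|\Lambda^s f\|_{L^{p_1}}\|g\|_{L^{p_2}}$ piece.

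For the first (and central) bracket, I would exploit that each dyadic block $\dot{S}_{j-1}f\,\dot\Delta_j g$ has Fourier spectrum in an annulus of radius $\sim 2^j$, rewrite $\Lambda^s$ on that block as convolution with a rescaled smooth kernel $h_j(z)=2^{jd}h(2^j z)$ obtained from the corresponding whole-space kernel by periodization, and Taylor-expand $\dot{S}_{j-1}f$ to first order. This produces the localized commutator bound
\[
\|\dot\Delta_j[\Lambda^s, T_f]g\|_{L^p} \lesssim 2^{j(s-1)} \|\nabla \dot{S}_{j-1}f\|_{L^{p_3}} \|\dot\Delta_j g\|_{L^{p_4}}.
\]
Summing in $j$ using almost-orthogonality (which requires $1<p<\infty$, so that the Littlewood--Paley square function is bounded on $L^p$), H\"older in the dyadic index, the $L^{p_3}$-boundedness of $\dot{S}_{j-1}$, and the Littlewood--Paley characterization $\|\Lambda^{s-1}g\|_{L^{p_4}}\sim \|(2^{j(s-1)}\dot\Delta_j g)_j\|_{L^{p_4}(\ell^2)}$, then gives the $\|\nabla f\|_{L^{p_3}}\|\Lambda^{s-1}g\|_{L^{p_4}}$ contribution.

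For the mixed piece $\Lambda^s T_g f - T_{\Lambda^s g}f$, use only spectral localization: each block of $T_g f$ lives at frequency $\sim 2^j$, so $\Lambda^s$ acts on it essentially like multiplication by $2^{js}$, and the same is true for $T_{\Lambda^s g}f$. A direct H\"older and almost-orthogonality argument then bounds this piece by $\|g\|_{L^{p_2}}\|\Lambda^s f\|_{L^{p_1}}$. The remainder piece $\Lambda^s R(f,g) - R(f,\Lambda^s g)$ is handled analogously, noting that each block of $R(f,g)$ has spectrum in a ball (not annulus), which merely forces a summation $\sum_{j'\geq j}$ but is still controlled by the same product via Young's inequality in the dyadic index.

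The main obstacle is the first-order Taylor expansion used in the $T_f$-commutator: on $\T^d$ one must justify that the frequency-$2^j$ localized kernel of $\Lambda^s$, after rescaling, behaves like the whole-space kernel and has enough decay for the mean-value argument. This is routine via Poisson summation but needs care at low frequencies (where the hypothesis $s>0$ enters, ensuring integrability of the relevant kernel). The endpoint cases $p_2=\infty$ or $p_4=\infty$ are absorbed by replacing the square-function characterization of $L^{p_i}$ with a Besov-type variant and invoking Bernstein inequalities.
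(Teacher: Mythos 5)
The paper does not prove Lemma~\ref{Le-KP}; it is quoted directly from \cite[Corollary 5.2]{Li19}, so there is no in-paper argument to compare your sketch against. What you outline is the standard paraproduct route for Kato--Ponce commutator estimates, which is the same Littlewood--Paley circle of ideas underlying Li's proof (and, before it, the Coifman--Meyer approach of Kato--Ponce). The structure is sound: the split of $[\Lambda^s,f]g$ into low--high, high--low, and remainder brackets is correct; the low--high bracket carries the one-derivative gain via first-order Taylor expansion of $\dot S_{j-1}f$ against the $2^j$-rescaled kernel of $\Lambda^s$ and yields the $\|\nabla f\|_{L^{p_3}}\|\Lambda^{s-1}g\|_{L^{p_4}}$ term; the other two brackets are handled by Bernstein and spectral localization and yield $\|\Lambda^s f\|_{L^{p_1}}\|g\|_{L^{p_2}}$; and you correctly flag where $s>0$ (summability over the resonant scales in the high--low and remainder pieces) and $1<p<\infty$ (square-function characterization of $L^p$) enter, as well as the periodic-domain technicality at the kernel level. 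To become a complete proof the sketch would need the bookkeeping you compress into ``almost-orthogonality'' and ``Poisson summation'' written out explicitly --- in particular, the localized estimate should account for the finitely many $j'$ contributing to each $\dot\Delta_j$, and the $j$-summation should go through a maximal-function/Fefferman--Stein step --- and it should also address the endpoints $p_1=\infty$ or $p_3=\infty$, which are admissible under the stated exponent constraints but which you mention only for $p_2$ and $p_4$.
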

     
\begin{lemma}\cite[Corollary 2.54]{Ba11}\label{Le-KP2}
Let $s>0$ and $p, r\in[1, \infty].$ Then $\dot{B}^s_{p, r} \cap L^\infty$ is an algebra and we have
  \begin{equation*}
\|fg\|_{\dot{B}^s_{p, r}(\T^d)}\leq C(\| f\|_{\dot{B}^s_{p, r}(\T^d)}\|g\|_{L^\infty(\T^d)}+\|f\|_{L^\infty(\T^d)}\|g \|_{\dot{B}^s_{p, r}(\T^d)}).
\end{equation*}
 \end{lemma}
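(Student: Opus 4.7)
\smallskip

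My plan is to prove the estimate via the standard Bony paraproduct decomposition, which is the natural tool because it cleanly separates the contributions where either $f$ or $g$ is ``low frequency'' (and hence estimated in $L^\infty$) from the diagonal remainder term.

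First I would write
\begin{equation*}
fg = T_f g + T_g f + R(f,g),
\end{equation*}
where $T_f g := \sum_{j} \dot S_{j-1} f \, \dot\Delta_j g$ is the paraproduct and $R(f,g) := \sum_{|j-j'|\leq 1} \dot\Delta_j f \, \dot\Delta_{j'} g$ is the remainder. By the spectral localization of $\dot S_{j-1} f \, \dot\Delta_j g$ in an annulus of size $2^j$ and by Bernstein's inequality, $\dot\Delta_k (T_f g)$ vanishes unless $|k-j| \leq N_0$ for some absolute $N_0$, so
\begin{equation*}
\|\dot\Delta_k (T_f g)\|_{L^p} \lesssim \sum_{|k-j|\leq N_0} \|\dot S_{j-1} f\|_{L^\infty} \|\dot\Delta_j g\|_{L^p} \lesssim \|f\|_{L^\infty} \sum_{|k-j|\leq N_0} \|\dot\Delta_j g\|_{L^p}.
\end{equation*}
Multiplying by $2^{ks}$ and taking the $\ell^r(\Z)$-norm then yields $\|T_f g\|_{\dot B^s_{p,r}} \lesssim \|f\|_{L^\infty}\|g\|_{\dot B^s_{p,r}}$ by Young's convolution inequality on $\Z$. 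The symmetric estimate $\|T_g f\|_{\dot B^s_{p,r}} \lesssim \|g\|_{L^\infty}\|f\|_{\dot B^s_{p,r}}$ follows by exchanging the roles.

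The remainder term is where the hypothesis $s>0$ is essential, and this is the main technical point. Each piece $\dot\Delta_j f \, \dot\Delta_{j'} g$ (with $|j-j'|\leq 1$) is spectrally supported in a ball of radius $\lesssim 2^j$, so $\dot\Delta_k R(f,g) = \sum_{j \geq k - N_1} \dot\Delta_k (\dot\Delta_j f \, \tilde{\dot\Delta}_j g)$ for some fattened block $\tilde{\dot\Delta}_j$. Estimating in $L^p$ and bounding, say, $\|\dot\Delta_j f\|_{L^p} \leq 2^{-js}(2^{js}\|\dot\Delta_j f\|_{L^p})$ and $\|\tilde{\dot\Delta}_j g\|_{L^\infty} \lesssim \|g\|_{L^\infty}$, I obtain
\begin{equation*}
2^{ks}\|\dot\Delta_k R(f,g)\|_{L^p} \lesssim \|g\|_{L^\infty} \sum_{j \geq k - N_1} 2^{(k-j)s} c_j, \qquad c_j := 2^{js}\|\dot\Delta_j f\|_{L^p}.
\end{equation*}
Because $s>0$, the kernel $2^{(k-j)s} \mathbf{1}_{j \geq k-N_1}$ is summable in $k-j$, so another application of Young's inequality on $\Z$ gives $\|R(f,g)\|_{\dot B^s_{p,r}} \lesssim \|g\|_{L^\infty}\|f\|_{\dot B^s_{p,r}}$ (and symmetrically). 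Summing the three contributions yields the claimed estimate. The only subtlety is the usual one for homogeneous spaces on $\T^d$: since our functions have zero mean, low frequencies vanish and there is no issue in defining $\dot S_{j-1} f$ or in summing the dyadic decomposition, so the argument transfers from the $\R^d$ case verbatim.
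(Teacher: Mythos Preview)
Your proof is correct and follows exactly the standard Bony paraproduct argument that the cited reference \cite[Corollary 2.54]{Ba11} uses; the paper itself does not give an independent proof but simply defers to that reference. Your remark about zero-mean functions on $\T^d$ making the homogeneous decomposition unproblematic is also the right justification for transferring the $\R^d$ argument.
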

We  frequently used the following product law (see e.g. \cite[Proposition 2.1]{AGZ}).
\begin{lemma}\label{Le-PL}
Let $1\leq  p<\infty$ and $s_1+s_2>0$ with $s_1\leq d/p,~s_2<d/p,$ then 
    \begin{align} 
   \dot{B}^{s_1}_{p,1}(\T^d)\times  \dot{B}^{ s_2}_{p, \infty}  (\T^d) \hookrightarrow \dot{B}^{ s_1+s_2-d/p}_{p, 1}(\T^d).
\end{align}
\end{lemma}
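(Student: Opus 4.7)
The plan is to employ Bony's homogeneous paraproduct decomposition on $\T^d$, splitting the product as
$$fg = T_f g + T_g f + R(f,g),$$
with $T_f g := \sum_{j\in\Z} \dot{S}_{j-1} f \, \dot{\Delta}_j g$ and $R(f,g) := \sum_j \dot{\Delta}_j f\,(\dot{\Delta}_{j-1}+\dot{\Delta}_j+\dot{\Delta}_{j+1})g$, and to bound each piece separately in $\dot{B}^{s_1+s_2-d/p}_{p,1}(\T^d)$ using the Littlewood-Paley machinery recalled in Appendix \ref{A}. The driving mechanism will be that the $\ell^1$-summability required by the target index $r=1$ is inherited from the $\ell^1$-summable Besov coefficients of $f \in \dot{B}^{s_1}_{p,1}$, whereas the merely $\ell^\infty$-summable coefficients of $g \in \dot{B}^{s_2}_{p,\infty}$ enter only through uniform bounds.

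For the paraproduct $T_g f$, quasi-orthogonality forces $\dot{\Delta}_k T_g f$ to vanish outside $k\sim j$, so I need only bound $\|\dot{S}_{j-1}g\|_{L^\infty} \lesssim 2^{j(d/p-s_2)}\|g\|_{\dot{B}^{s_2}_{p,\infty}}$, which follows from summing the Bernstein estimates on low-frequency blocks as a convergent geometric series (this is exactly where the hypothesis $s_2 < d/p$ enters). Multiplying by the target weight and summing in $k$ immediately yields $\|T_g f\|_{\dot B^{s_1+s_2-d/p}_{p,1}} \lesssim \|g\|_{\dot{B}^{s_2}_{p,\infty}} \|f\|_{\dot{B}^{s_1}_{p,1}}$. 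For $T_f g$, one similarly isolates $\dot{S}_{j-1}f$; when $s_1 < d/p$, writing $\|\dot{S}_{j-1}f\|_{L^\infty} \leq \sum_{j' \leq j-2} 2^{j'd/p}\|\dot{\Delta}_{j'}f\|_{L^p}$ produces, after multiplication by $2^{k(s_1+s_2-d/p)}$, a convolution of the $\ell^1(\Z)$ sequence $a_{j'}:=2^{j's_1}\|\dot{\Delta}_{j'}f\|_{L^p}$ against the geometrically decaying kernel $2^{m(d/p-s_1)}\mathbf{1}_{m\leq 0}$, which is therefore itself $\ell^1$ and pointwise controlled by $\|g\|_{\dot{B}^{s_2}_{p,\infty}}$. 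For the remainder $R(f,g)$, each summand has spectrum in a ball of radius $\sim 2^{j+c}$, so $\dot{\Delta}_k R(f,g)$ only collects indices $j \geq k-c$; combining the Bernstein-type bound $\|\dot{\Delta}_j f\,\tilde{\dot{\Delta}}_j g\|_{L^p} \lesssim 2^{jd/p}\|\dot{\Delta}_j f\|_{L^p}\|\tilde{\dot{\Delta}}_j g\|_{L^p}$ (sharpened via $L^{p/2}\hookrightarrow L^p$ Bernstein when $p\geq 2$) with interchange of summations, the hypothesis $s_1+s_2>0$ makes the inner geometric series converge and leaves an $\ell^1$-in-$j$ bound dominated by $\|f\|_{\dot{B}^{s_1}_{p,1}}\|g\|_{\dot{B}^{s_2}_{p,\infty}}$.

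The main obstacle I anticipate is the borderline regime $s_1 = d/p$ in the estimate of $T_f g$: the geometrically decaying kernel $2^{m(d/p-s_1)}$ that provided $\ell^1$-summability collapses to a constant, and the convolution argument above breaks down. In this case I plan to invoke instead the sharp embedding $\dot{B}^{d/p}_{p,1}(\T^d) \hookrightarrow L^\infty(\T^d)$ (which itself crucially requires the $r=1$ index and is recorded in Proposition \ref{P_Besov}), yielding the uniform bound $\|\dot{S}_{j-1}f\|_{L^\infty}\lesssim\|f\|_{\dot B^{d/p}_{p,1}}$, and then recover the target $\ell^1$-summability in $k$ via a slightly more careful reshuffling that uses the $\ell^1$-in-$j$ structure of $\dot{\Delta}_j g$ picked up by the block-level estimate. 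A secondary technicality is the remainder piece when $p\in[1,2)$, where the sharp $L^{p/2}\hookrightarrow L^p$ Bernstein step is unavailable and must be replaced by $\|\dot{\Delta}_j f \, \tilde{\dot{\Delta}}_j g\|_{L^p}\leq \|\dot{\Delta}_j f\|_{L^\infty}\|\tilde{\dot{\Delta}}_j g\|_{L^p}$; the less favourable $j$-dependent prefactor is narrowly absorbed thanks to $s_1+s_2>0$ and the $\ell^1$-summability of $f$.
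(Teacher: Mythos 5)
The paper's own ``proof'' is a citation to Proposition~2.1 of Abidi--Gui--Zhang \cite{AGZ}, so there is no in-text argument to compare against; your plan of Bony's paraproduct decomposition is the standard route and is almost certainly what that reference does as well. Your treatment of $T_g f$ (using $s_2<d/p$ to sum the geometric series in $\|\dot S_{j-1}g\|_{L^\infty}$) and of $T_f g$ in the strict regime $s_1<d/p$ (the convolution against the one-sided geometric kernel $2^{m(d/p-s_1)}\mathbf 1_{m\le 0}$) are both correct and are exactly how one proves the $\ell^1$ output.

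However, the two ``technicalities'' you flag at the end are genuine gaps, not loose ends to be tightened.

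\textbf{Borderline $s_1=d/p$.} Your proposed fix cannot work: the phrase ``$\ell^1$-in-$j$ structure of $\dot\Delta_j g$'' has no content because $g$ lies only in $\dot B^{s_2}_{p,\infty}$, whose coefficients are merely $\ell^\infty$; once you trade $\|\dot S_{j-1}f\|_{L^\infty}\lesssim\|f\|_{\dot B^{d/p}_{p,1}}$ for a $j$-independent constant, the weighted sum $\sum_k 2^{ks_2}\|\dot\Delta_k T_f g\|_{L^p}$ is precisely $\sum_k 2^{ks_2}\|\dot\Delta_k g\|_{L^p}$, which is the $\dot B^{s_2}_{p,1}$-norm of $g$ and is not finite. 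In fact the $r=1$ output is genuinely false when $s_1=d/p$: take $f=\cos x_1\in\dot B^{d/p}_{p,1}$ (a single Fourier mode, mean zero) and $g=\sum_{n\ge 1}2^{-ns_2}e^{i2^n x_1}$, which lies in $\dot B^{s_2}_{p,\infty}\setminus \dot B^{s_2}_{p,1}$; then $fg$ has Littlewood--Paley blocks of the same size as those of $g$, so $fg\in\dot B^{s_2}_{p,\infty}\setminus\dot B^{s_2}_{p,1}$, contradicting $fg\in\dot B^{s_1+s_2-d/p}_{p,1}=\dot B^{s_2}_{p,1}$. In the equality case the high--low paraproduct only lands in $\dot B^{s_1+s_2-d/p}_{p,\infty}$; this is sharp and no reshuffling recovers $r=1$. (The applications in the paper, such as \eqref{productlaw1} and \eqref{productlaw22}, only ever use the $r=\infty$ conclusion when $s_1=d/p$, so the paper's arguments are unaffected, but the lemma as literally stated overclaims in this regime and your plan should not try to prove that claim.)

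\textbf{Remainder for $p<2$.} The replacement $\|\dot\Delta_j f\,\tilde{\dot\Delta}_j g\|_{L^p}\le\|\dot\Delta_j f\|_{L^\infty}\|\tilde{\dot\Delta}_j g\|_{L^p}\lesssim 2^{jd/p}\|\dot\Delta_j f\|_{L^p}\|\tilde{\dot\Delta}_j g\|_{L^p}$ is \emph{not} ``narrowly absorbed by $s_1+s_2>0$.'' Setting $a_j:=2^{js_1}\|\dot\Delta_j f\|_{L^p}$, $b_j:=2^{js_2}\|\tilde{\dot\Delta}_j g\|_{L^p}$ and $\sigma:=s_1+s_2-d/p$, this estimate gives
\begin{equation*}
2^{k\sigma}\|\dot\Delta_k R(f,g)\|_{L^p}\lesssim\sum_{j\ge k-c}2^{(k-j)\sigma}\,a_j b_j,
\end{equation*}
and summing in $k$ produces $\sum_j a_j b_j\sum_{k\le j+c}2^{(k-j)\sigma}$, which converges only when $\sigma>0$, i.e.\ $s_1+s_2>d/p$ --- strictly stronger than the hypothesis $s_1+s_2>0$. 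The reason $s_1+s_2>0$ suffices when $p\ge 2$ is precisely that one can place the Bernstein factor at the \emph{output} frequency $k$ by going through $L^{p/2}$, using that $\dot\Delta_k$ is bounded on the Banach space $L^{p/2}$; for $p<2$ that step is unavailable, and passing through $L^\infty\times L^p$ puts the Bernstein loss at the input frequency $j$, which is exactly what costs you the extra factor $2^{(j-k)d/p}$. You need either the Nikolskii/Plancherel--P\'olya sub-unit Bernstein inequality on the ball-supported blocks together with a quasi-Banach interpolation argument, or a genuinely different organization of the sum, and the paper indeed applies the lemma in the regime $p<2$, $\sigma\le 0$ (e.g.\ \eqref{productlaw22} with $p^*=d/(d-\theta)<2$ and $\sigma=-\theta<0$), so this case cannot be waved away.
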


Next, we  need  the regularity estimates in Besov spaces for the fractional Stokes equation:
\begin{equation}\label{eq-Stokes}
   \left\{ \begin{aligned}
     (-\Delta)^\gamma u- \nabla P&= \div F\quad {\rm on}~~\T^d,\\
     \div u&=g.
 \end{aligned}\right.
\end{equation}
 
\begin{lemma}\label{Le-L}
Let $\gamma\geq 0,$ $1\leq p, r\leq\infty$ and $s\in\R.$  Assume that $F\in\dot B^{s}_{p, r}(\T^d).$ Then, \eqref{eq-Stokes} has a unique  solution $(u,   P)$ in $\dot B^{2\gamma+s-1}_{p, r}(\T^d)\times \dot B^{s}_{p, r}(\T^d)$  and there exists a constant $C$ depending only on $d$  such that 
\begin{equation*} 
\|u\|_{\dot B^{2\gamma+s-1}_{p, r}(\T^d)} +\|P\|_{\dot{B}^s_{p, r}(\T^d)}\leq C(\|F\|_{\dot B^{s}_{p, r}(\T^d)}+ \|g\|_{\dot{B}^{2\gamma+s-2}_{p, r}(\T^d)}).
\end{equation*}
\end{lemma}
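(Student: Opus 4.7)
\medskip
\noindent\textit{Proof plan for Lemma \ref{Le-L}.} The plan is to reduce the fractional Stokes system \eqref{eq-Stokes} to two scalar Fourier multiplier problems, one for the pressure and one for the velocity, and then invoke the boundedness of homogeneous Fourier multipliers on $\dot{B}^s_{p,r}(\T^d)$ from Proposition \ref{P_Besov}. Throughout, I will work with mean-free $u$ and $P$, which is the natural compatibility condition imposed by the equation on the torus (any constant may be added to $P$, and the zero Fourier mode of $u$ is not seen by the elliptic operator $(-\Delta)^\gamma$).

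First I would derive a closed formula for $P$ by taking the divergence of the first line of \eqref{eq-Stokes}. Using $\div\,(-\Delta)^\gamma u=(-\Delta)^\gamma g$ and the fact that on mean-free functions the operators $(-\Delta)^\alpha$ are just Fourier multipliers with symbol $|{\rm k}|^{2\alpha}$, one gets
\begin{equation*}
-\Delta P = \div\div F-(-\Delta)^\gamma g,
\end{equation*}
so that $P=(-\Delta)^{-1}\div\div F-(-\Delta)^{\gamma-1}g$. Plugging this back in the momentum equation then yields
\begin{equation*}
u=(-\Delta)^{-\gamma}\bigl(\div F+\nabla P\bigr)
 =(-\Delta)^{-\gamma}\mathbb{P}\div F-(-\Delta)^{-1}\nabla g,
\end{equation*}
where $\mathbb{P}={\rm Id}-\nabla(-\Delta)^{-1}\div$ is the Leray projector. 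This gives existence (the formulas make sense as long as $F$ and $g$ lie in the stated spaces) and uniqueness at once, since any solution $(u,P)$ of the homogeneous problem with mean-free $u$ and $P$ must have $P\equiv 0$ and $u\equiv 0$ by the same computation.

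Next I would read off the regularity of each piece. Each of the symbols $|{\rm k}|^{-2}{\rm k}\otimes{\rm k}$, ${\rm k}_j{\rm k}_\ell/|{\rm k}|^2$, and the components of $\mathbb{P}$ is bounded and homogeneous of degree $0$ on $\Z^d\setminus\{0\}$, so by the last item of Proposition \ref{P_Besov} each acts boundedly on every $\dot B^s_{p,r}(\T^d)$. Combined with the derivative rescaling $\|D^k w\|_{\dot B^s_{p,r}}\sim\|w\|_{\dot B^{s+k}_{p,r}}$, this gives exactly the right shift of indices: $\div F\mapsto(-\Delta)^{-\gamma}\mathbb{P}\div F$ is of order $1-2\gamma$ and so maps $\dot B^s_{p,r}\to \dot B^{s+2\gamma-1}_{p,r}$, while $g\mapsto(-\Delta)^{-1}\nabla g$ is of order $-1$, sending $\dot B^{s+2\gamma-2}_{p,r}\to \dot B^{s+2\gamma-1}_{p,r}$. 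Adding these two bounds gives the claimed estimate for $\|u\|_{\dot B^{s+2\gamma-1}_{p,r}}$. In the same way, $(-\Delta)^{-1}\div\div$ is of order $0$ acting on $F$ and $(-\Delta)^{\gamma-1}$ sends $\dot B^{s+2\gamma-2}_{p,r}$ to $\dot B^s_{p,r}$, which yields the bound for $\|P\|_{\dot B^s_{p,r}}$.

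There is really no serious obstacle here: the argument is a periodic analogue of the standard Fourier multiplier reduction for the Stokes system, and all the operator bounds required are already packaged in Proposition \ref{P_Besov}. The only subtleties to flag are (i) the need to work in the quotient modulo constants/means so that symbols like $|{\rm k}|^{-2}$ are well defined on the relevant frequencies, and (ii) checking that the extra condition $F\in\dot B^s_{p,r}$, $g\in\dot B^{s+2\gamma-2}_{p,r}$ is indeed enough for the explicit formulas for $P$ and $u$ to define elements of the target Besov spaces — which is immediate from the order counting above.
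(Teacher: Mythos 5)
Your proposal is correct and follows essentially the same route as the paper: both reduce to boundedness of homogeneous degree-$0$ Fourier multipliers on $\dot B^s_{p,r}(\T^d)$ via Proposition \ref{P_Besov}. The only organizational difference is that the paper first reduces the general case to $g\equiv 0$ by the substitution $v:=u+\nabla(-\Delta)^{-1}g$ and then quotes the $g\equiv 0$ case as classical, whereas you carry out the multiplier computation directly for general $g$ (a small slip: with the sign convention in this paper one should write $\mathbb{P}={\rm Id}+\nabla(-\Delta)^{-1}\div$, since $(-\Delta)^{-1}=-\Delta^{-1}$; this has no effect on your bounds, and your displayed formulas for $u$ and $P$ are in fact the correct ones).
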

\begin{proof}
  The proof of the case $g\equiv0$ is classical. The general case follows from this particular case by  considering $v:=u+\nabla(-\Delta)^{-1}g,$ which satisfies
 \begin{equation*} 
   \left\{ \begin{aligned}
     (-\Delta)^\gamma v- \nabla (P+(-\Delta)^{\gamma-1}g)&= \div F\quad {\rm on}~~\T^d,\\
     \div v&=0.
 \end{aligned}\right.
\end{equation*}
\end{proof}

In the last of this section, we give the proof of Lemma \ref{Le-es-FracL}. 
\begin{proof}[Proof of Lemma \ref{Le-es-FracL}]
    The proof is via duality argument.  We first prove \eqref{Le-es-F1}.
    For any $f\in \mathcal{S}(\T^d)\cap \dot{B}^{s_1}_{p_{1}', r_{1}'}(\T^d),$   we   use  the periodic representation of the kernel $K^{s_2}$ to write
    \begin{align*}
   &\langle \Lambda^{s_2}_{\underline{v}} w, f\rangle\\
   =&~{\rm p.v.}\int_{\T^d} \int_{\T^d}f(y)(w(y)-w(z)) \,K^{\frac{s_2}{2}}_{\rm per}(X_{\underline{v}}(t, y)-X_{\underline{v}}(t, z))\,dzdy\\
    =&~\frac{1}{2} {\rm p.v.}\int_{\T^d} \int_{\T^d}(f(y)-f(z))(w(y)-w(z)) \,K^{\frac{s_2}{2}}_{\rm per}(X_{\underline{v}}(t, y)-X_{\underline{v}}(t, z))\,dzdy\\
     =&~\frac{1}{2} {\rm p.v.}\int_{\T^d} \int_{\T^d}(f(y)-f(z+y))(w(y)-w(z+y)) \\
     &\hspace*{6cm} \cdot K^{\frac{s_2}{2}}_{\rm per}(X_{\underline{v}}(t, y)-X_{\underline{v}}(t, z+y))\,dzdy.
    \end{align*}
    Note that  in our framework
    \begin{align*}
    |X_{\underline{v}}(t, y)-X_{\underline{v}}(t, z+y)|\sim  |z|.
    \end{align*}
    Indeed, we have
    \begin{align}\label{A4}
     X_{\underline{v}}(t, y)-X_{\underline{v}}(t, z+y) =-z- z\cdot \int_0^1\int_0^t (\nabla_y\underline{v})(\tau, y+ az )\,d\tau da.
    \end{align}
Hence by H\"{o}lder's inequality and  Proposition \ref{P-F-B} (note that our assumptions ensure $0<s_2,\, s_2-s_1<1$), and the fact that $w,  f$ are periodic,
     \begin{align*}
    &|\langle \Lambda^{s_2}_{\underline{v}} w, f\rangle|\\
        \lesssim&   ~{\rm p.v.}\int_{\T^d}        \|w(\cdot)-w(z+\cdot)\|_{L^{p_1}} \|f(\cdot)-f(z+\cdot) \|_{L^{p_1'}}  K^{\frac{s_2}{2}}_{\rm per}(z)\,dz   \\
           \lesssim&   ~{\rm p.v.}\int_{\T^d}        \|w(\cdot)-w(z+\cdot)\|_{L^{p_1}} \|f(\cdot)-f(z+\cdot) \|_{L^{p_1'}}  \frac{c_{d,  \frac{s_2}{2}}}{|z|^{d+s_2}}       \,dz   \\
             &\quad +  \sum_{{\rm j}\in\mathbb{Z}^d,  |{\rm j}|\geq 2\sqrt{d}}\int_{\T^d}        \|w(\cdot)-w(z+\cdot)\|_{L^{p_1}} \|f(\cdot)-f(z+\cdot) \|_{L^{p_1'}}  \frac{c_{d,  \frac{s_2}{2}}}{|z+{\rm j}|^{d+s_2}}       \,dz  \\
    \lesssim&~ {\rm p.v.}\int_{\T^d} \frac{\|w(\cdot+z)-w(\cdot)\|_{L^{p_1}(\T^d)}}{|z|^{s_2-s_1}}  \frac{\|f(\cdot+z)-f(\cdot)\|_{L^{p_1'}(\T^d)}}{|z|^{s_1}}   \frac{c_{d,  \frac{s_2}{2}}}{|z|^{d}} \,dz  \\
     \lesssim&~ \| w\|_{\dot{B}^{s_2-s_1}_{p_{1}, r_{1}}(\T^d)}~\|f\|_{\dot{B}^{s_1}_{p_{1}', r_{1}'}(\T^d)} 
    \end{align*}
    for $\frac{1}{p_1'}+ \frac{1}{p_1}=  \frac{1}{r_1'}+\frac{1}{r_1}=1,$
where we used the fact that 
\begin{align}\label{esA-remainder}
 \left\|\sum_{{\rm j}\in\mathbb{Z}^d,  |{\rm j}|\geq 2\sqrt{d}}  \frac{c_{d,  \frac{s_2}{2}}}{|\cdot+{\rm j}|^{d+s_2}} \right \|_{L^\infty(\T^d)}\lesssim  \sum_{{\rm j}\in\mathbb{Z}^d,  |{\rm j}|\geq 1} \frac{c_{d,  \frac{s_2}{2}}}{|{\rm j}|^{d+s_2}} <\infty.
\end{align}

    Similarly,   to prove \eqref{Le-es-F2} we  can write
    \begin{align*}
     &\langle (\Lambda^{s_2}_{\underline{v1}}- \Lambda^{s_2}_{\underline{v2}}) w, f\rangle\\
 =&~{\rm p.v.}\int_{\T^d} \int_{\T^d}f(y)(w(y)-w(z)) \\
& \qquad \qquad \qquad \cdot\left( K^{s_2}_{\rm per}(X_{\underline{v_1}}(t, y)-X_{\underline{v_1}}(t, z)) -K^{s_2}_{\rm per}(X_{\underline{v_2}}(t, y)-X_{\underline{v_2}}(t, z))\right)\,dzdy\\
 =&~\frac{1}{2}  {\rm p.v.}\int_{\T^d} \int_{\T^d}(f(y)-f(z))(w(y)-w(z)) \\
& ~\qquad \qquad \qquad \cdot\left( K^{s_2}_{\rm per}(X_{\underline{v_1}}(t, y)-X_{\underline{v_1}}(t, z)) -K^{s_2}_{\rm per}(X_{\underline{v_2}}(t, y)-X_{\underline{v_2}}(t, z))\right)\,dzdy\\
 =&~\frac{1}{2}  {\rm p.v.}\int_{\T^d} \int_{\T^d}(f(y)-f(z+y))(w(y)-w(z+y)) \\
&~ \quad  \qquad \cdot\left( K^{s_2}_{\rm per}(X_{\underline{v_1}}(t, y)-X_{\underline{v_1}}(t, z+y)) -K^{s_2}_{\rm per}(X_{\underline{v_2}}(t, y)-X_{\underline{v_2}}(t, z+y))\right)\,dzdy.
    \end{align*}
 And, thanks to \eqref{A4} we have
    \begin{align*}
   &| (X_{\underline{v_1}}(t, y)-X_{\underline{v_1}}(t, z+y)) - (X_{\underline{v_2}}(t, y)-X_{\underline{v_2}}(t, z+y))|\\
   \lesssim& ~\frac{\|\nabla_y \delta\underline{v}\|_{L^1(0, t; L^\infty(\T^d))}}{|z|^{d+s_2}}.
    \end{align*}
    Thus, applying H\"{o}lder's inequality and \eqref{esA-remainder} again yields \eqref{Le-es-F2}.
\end{proof}
 

\section*{Acknowledgement}
The author would like to thank the three  referees for their careful reading,  helpful suggestions and valuable comments,  which helped   to improve the presentation of  the manuscript a lot.  
 This work was partially supported by   the CY Initiative of Excellence (project CYNA),   the Direct Grant  of CUHK (No.  4053715)  and  the Hong Kong RGC grant (CUHK14302525).  
\bigbreak

{\bf Data Availibility.}
Data sharing is not applicable to this article as no data sets were generated or analyzed during the current study.
 

\end{document}